\documentclass[11pt]{amsart}

\usepackage{tikz}
\usetikzlibrary{arrows.meta,positioning,calc}

\tikzset{
  geoschematic/.style={
    >=Latex,
    line cap=round,
    line join=round,
    line width=0.6pt,
    font=\small,
  },
  geolabel/.style={font=\small},
}

\usepackage{graphicx}

\usepackage{amsmath,amssymb,amsfonts,amsthm}
\usepackage{mathrsfs}

\DeclareMathOperator{\supp}{supp}

\usepackage[numbers,sort&compress]{natbib}
\usepackage[colorlinks=true,linkcolor=blue,citecolor=blue,urlcolor=blue]{hyperref}

\theoremstyle{plain}
\newtheorem{theorem}{Theorem}[section]
\newtheorem{lemma}[theorem]{Lemma}
\newtheorem{proposition}[theorem]{Proposition}
\newtheorem{corollary}[theorem]{Corollary}

\theoremstyle{definition}
\newtheorem{definition}[theorem]{Definition}
\newtheorem{assumption}[theorem]{Assumption}

\theoremstyle{remark}
\newtheorem{remark}[theorem]{Remark}

\title{Heat Flow under Semi-Flat Collapse with Conic Renormalization}
\author{Xin yu Liao}
\date{}
\begin{document}

\maketitle
\begin{abstract}
Motivated by the Strominger--Yau--Zaslow (SYZ) collapse of elliptic K3 surfaces, we study heat flow
on Ricci-flat K\"ahler manifolds $(X_t,g(t))$ equipped over $B_{\mathrm{reg}}=B\setminus D$ with a semi-flat
$2$-torus fibration $\Pi_t:X_t\to B$. Assuming an exponentially accurate semi-flat product approximation
and a uniform vertical spectral gap at the collapse scale, we prove that for each fixed $\tau>0$ the
fiber-compressed heat operators $P_t e^{-\tau\Delta_{g(t)}} I_t$ converge strongly on $L^2$ to the base
heat semigroup $e^{-\tau\Delta_{g_B}}$; equivalently, bilinear pairings of the total-space heat kernel
against fiber-constant test functions converge on compact subsets of $B_{\mathrm{reg}}$.

Near the discriminant $D$ the limiting base metric is conic. We introduce a canonical renormalized
bilinear functional $K_B^{\mathrm{ren}}$ by replacing the pairing in neighborhoods of each conic point
with the model flat-cone heat kernel. A conic parametrix proves cutoff- and chart-independence of
$K_B^{\mathrm{ren}}$ and shows that the total-space pairings converge to $K_B^{\mathrm{ren}}(\Phi,\Psi;\tau)$
for all $\Phi,\Psi\in C_c^{\infty}(B)$. Under exponentially small geometric errors we obtain an explicit
exponential convergence rate, uniform for $\tau$ in compact subsets of $(0,\infty)$, and we also treat
Neumann and nonnegative Robin boundary conditions.
\end{abstract}

\section{Introduction and statement of results}

\subsection{Motivation: heat flow under SYZ-type collapse}
Collapsing families of Ricci--flat K\"ahler metrics arise naturally in complex geometry and mirror symmetry,
most prominently in the Strominger--Yau--Zaslow (SYZ) picture \cite{SYZ} and in the degeneration theory of
elliptically fibered K3 surfaces.  In such settings one expects that, away from singular fibers,
the geometry becomes increasingly close to a semi-flat product: a fixed base metric coupled with flat torus
fibers whose diameters shrink to zero, with ``instanton corrections'' that are exponentially small in the
collapse scale; see, for instance, \cite{GrossWilson,TosattiAdiabatic,GrossTosattiZhang}.Analytically, this collapse suggests a clean separation between low-energy horizontal
dynamics and high-energy vertical oscillations, governed by a uniform fiberwise spectral gap.
Such spectral separation is consistent with the adiabatic limit framework developed in 
\cite{TosattiAdiabatic,GrossTosattiZhang}.

The purpose of this paper is to make this picture precise at the level of heat flow.
Since pointwise convergence of heat kernels is generally unstable under collapse, we focus on a robust notion:
\emph{bilinear heat-kernel pairings} tested against fiber-constant lifts of base functions, or equivalently the strong
convergence of \emph{fiberwise-compressed} heat semigroups.
The new feature occurs at the discriminant: the limiting base geometry is conic near $D$, and to treat test functions
that meet $D$ one needs an intrinsic way to pass from purely interior convergence on $B_{\mathrm{reg}}$ to a global,
cutoff-independent limiting bilinear object. We introduce such a limit functional by a canonical conic renormalization
based on the flat-cone model.

\subsection{Geometric setting and analytic identifications}
Let \(B\) be a smooth surface, let \(D=\{p_1,\dots,p_N\}\subset B\) be a finite discriminant set, and set
\[
B_{\mathrm{reg}}:=B\setminus D.
\]
Let \((X_t,g(t))\) be a family of Riemannian manifolds equipped with smooth surjective submersions
\(\Pi_t:X_t\to B\) whose fibers \(F_{b,t}:=\Pi_t^{-1}(b)\) are flat $2$-tori over \(B_{\mathrm{reg}}\).
Fix a precompact domain \(B_0\Subset B_{\mathrm{reg}}\) with smooth boundary and set \(X_{t,0}:=\Pi_t^{-1}(B_0)\).
We impose a boundary condition (Dirichlet or Neumann; Robin with nonnegative weight is discussed later) on
\(\partial B_0\) and simultaneously on \(\partial X_{t,0}=\Pi_t^{-1}(\partial B_0)\).



\begin{figure}[t]
\centering
\begin{tikzpicture}[
  >=Latex,
  font=\small,
  line cap=round,
  line join=round,
  boundary/.style={draw=black, line width=0.9pt},
  proj/.style={->, line width=0.9pt},
  aux/.style={draw=black!60, line width=0.35pt, dotted}, 
  punct/.style={draw=black, line width=0.7pt, fill=white},
  lab/.style={fill=white, rounded corners=1pt, inner sep=1.6pt}
]

\def\yB{0.0}    
\def\yX{4.40}   

\def\SurfacePath{
  (3.25,0.10)
    .. controls (3.05,1.30) and (1.75,1.85) .. (0.55,1.55)
    .. controls (-0.90,1.85) and (-2.70,1.20) .. (-3.10,0.15)
    .. controls (-3.55,-0.90) and (-2.45,-1.90) .. (-1.10,-1.55)
    .. controls (0.75,-1.90) and (2.95,-1.20) .. (3.25,0.10)
  -- cycle
}

\begin{scope}[shift={(0,\yB)}]
  \path[boundary, fill=white] \SurfacePath;
  \node[lab] at (0,0) {$B$};
\end{scope}

\begin{scope}[shift={(0,\yX)}]
  \path[boundary, fill=white] \SurfacePath;
  \node[lab] at (0,0) {$X_t$};
\end{scope}

\draw[proj]
  (0,\yX-1.95) -- (0,\yB+1.95)
  node[pos=0.52, right=3pt, lab] {$\Pi_t$};

\coordinate (p1) at (-1.75,\yB+0.55);
\coordinate (p2) at ( 0.55,\yB-0.70);
\coordinate (pN) at ( 1.85,\yB+0.78);

\foreach \P/\labtxt in {p1/$p_1$,p2/$p_2$,pN/$p_N$}{
  \draw[punct] (\P) circle (2.35pt);
  \node[lab, below right=1pt] at (\P) {\labtxt};
}
\node[lab] at (1.15,\yB+0.55) {$\cdots$};

\node[anchor=west, lab] (Dlabel) at (-3.95,\yB+1.45) {$D=\{p_j\}_{j=1}^N$};
\draw[->, line width=0.8pt, shorten >=2pt]
  (Dlabel.east) to[out=-10,in=170] (-2.35,\yB+0.95);

\node[anchor=west, lab] at (-3.95,\yB-1.35) {$B_{\mathrm{reg}}:=B\setminus D$};

\coordinate (bpt) at (2.05,\yB+0.10);
\fill (bpt) circle (1.3pt);
\node[lab, anchor=west] at ($(bpt)+(0.18,-0.35)$) {$b\in B_{\mathrm{reg}}$};

\coordinate (FbigC)   at (2.05,\yX+0.80);
\coordinate (FsmallC) at (2.95,\yX+0.80);

\begin{scope}[shift={(FbigC)}]
  \path[draw=black, line width=0.85pt, fill=white, even odd rule]
    (0,0) ellipse (0.78 and 0.48)
    (0,0) ellipse (0.33 and 0.20);
  \node[lab] at (0,0) {\(\mathbb{T}^2\)};
\end{scope}
\node[lab, above=3pt] at ($(FbigC)+(0,0.62)$) {regular fiber (flat)};

\draw[aux] ($(FbigC)+(0,-0.55)$) -- (bpt);

\begin{scope}[shift={(FsmallC)}]
  \path[draw=black, line width=0.85pt, fill=white, even odd rule]
    (0,0) ellipse (0.46 and 0.28)
    (0,0) ellipse (0.19 and 0.12);
\end{scope}

\draw[->, line width=0.9pt]
  ($(FbigC)+(0.90,0)$) -- ($(FsmallC)+(-0.58,0)$)
  node[midway, above, lab] {\(t\downarrow 0\)};

\node[lab, below=2pt] at ($(FsmallC)+(0,-0.45)$) {\(\mathrm{diam}(F_{b,t})\to 0\)};

\end{tikzpicture}
\caption{Schematic picture of the fibration \(\Pi_t:X_t\to B\) with finite discriminant set \(D\). Over the regular locus \(B_{\mathrm{reg}}=B\setminus D\), the fibers are flat \(2\)-tori \(\mathbb{T}^2\) that collapse as \(t\downarrow 0\).}
\label{fig:fibration-discriminant}
\end{figure}
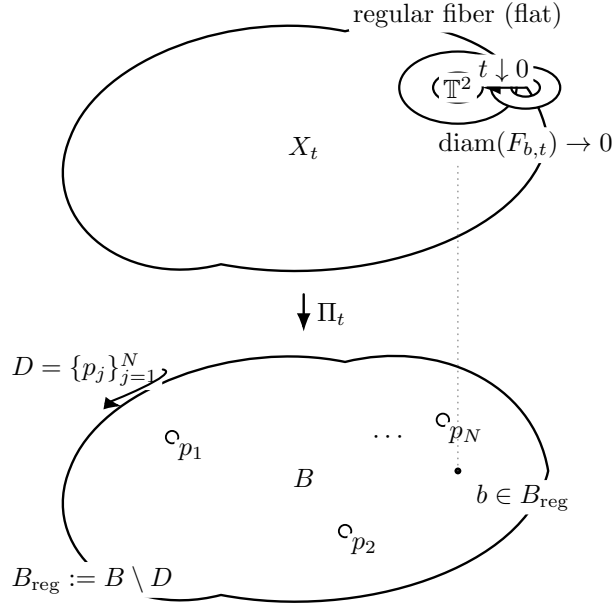

\paragraph{Semi-flat control and vertical gap (interior).}
On each \(B_0\Subset B_{\mathrm{reg}}\) we assume a semi-flat product approximation
\[
g(t)=g_B\oplus g_{F,t}+E_t
\quad\text{on }X_{t,0},
\]
where \(g_B\) is a fixed smooth metric on \(B_0\), \(g_{F,t}|_{F_{b,t}}\) is the flat metric on the fiber, and
the error \(E_t\) is exponentially small in a collapse scale \(s^*(t)\downarrow 0\):
\[
\|E_t\|_{C^1(X_{t,0})}\le \varepsilon_t,\qquad \varepsilon_t\le C e^{-c/s^*(t)}.
\]
We also assume a uniform vertical spectral gap at the collapse scale:
\[
\lambda_1(F_{b,t},g_{F,t})\ \ge\ c_0\, s^*(t)^{-2},\qquad \forall b\in B_0,
\]
where \(\lambda_1\) denotes the first nonzero eigenvalue of the fiber Laplacian.
These hypotheses are standard in semi-flat regimes arising in SYZ-type degenerations \cite{GrossWilson,OV,semiFlatRefs}.

\paragraph{Normalized lift/average.}
A key point is to compare operators on the varying spaces \(X_{t,0}\) with operators on the fixed base \(B_0\).
Using quantitative fiberwise disintegration of the volume (available under the semi-flat control),
we define the \emph{normalized lift} \(I_t:L^2(B_0)\to L^2(X_{t,0})\) and \emph{fiber-average} \(P_t:L^2(X_{t,0})\to L^2(B_0)\)
so that
\[
P_t I_t=\mathrm{Id}_{L^2(B_0)},\qquad I_t=P_t^*,
\]
and \(I_t,P_t\) are near-isometries on \(L^2\) (and compatible with horizontal \(H^1\)) with errors \(O(\varepsilon_t)\).
Heuristically, \(I_t\) identifies a base function with a fiber-constant function, normalized by the fiber area.

\subsection{Bilinear pairings and compressed semigroups}
Let \(H_{t,0}:=-\Delta_{g(t)}\) on \(L^2(X_{t,0})\) and \(H_{B,0}:=-\Delta_{g_B}\) on \(L^2(B_0)\), with the chosen boundary condition.
Denote the heat semigroups by
\[
T_t(\tau):=e^{-\tau H_{t,0}},\qquad T_B(\tau):=e^{-\tau H_{B,0}},
\]
with corresponding heat kernels \(K_t(\cdot,\cdot;\tau)\) and \(K_B(\cdot,\cdot;\tau)\).
For \(\Phi,\Psi\in C_c^\infty(B_0)\), let \(\Phi_t:=I_t\Phi\) and \(\Psi_t:=I_t\Psi\).
The bilinear pairing of \(K_t\) against fiber-constant lifts can be written equivalently as
\begin{equation}\label{eq:bilinear-via-compression}
\iint_{X_{t,0}\times X_{t,0}} K_t(x,y;\tau)\,\Phi_t(x)\Psi_t(y)\,d\mu_{g(t)}(x)d\mu_{g(t)}(y)
\ =\ \langle \Phi,\ (P_tT_t(\tau)I_t)\Psi\rangle_{L^2(B_0)}.
\end{equation}
Thus strong convergence of the compressed semigroups \(P_tT_t(\tau)I_t\) implies convergence of bilinear heat-kernel pairings.

\subsection{Discriminant regime and intrinsic conic renormalization}
The previous discussion applies on \(B_{\mathrm{reg}}\). To treat test functions that may meet the discriminant,
we incorporate the conic nature of the limiting base geometry near \(D\).

\paragraph{Conic model on the base.}
For each \(p_j\in D\), choose a wedge chart \(U_j\ni p_j\) with polar coordinates \((r,\theta)\) centered at \(p_j\),
where \(r=\mathrm{dist}_{g_B}(\cdot,p_j)\). We assume \(g_B\) is \(C^1\)-close to the flat cone
\[
g_{j}^{\mathrm{cone}}:=dr^2+\alpha_j^2 r^2 d\theta^2
\quad\text{on }U_j\setminus\{p_j\},
\]
in a quantitative sense (cf.\ Assumption~\ref{ass:conic-asymptotics}), for some cone factors \(\alpha_j>0\).
Let \(K^{\mathrm{cone}}_j(\cdot,\cdot;\tau)\) denote the Friedrichs heat kernel on \((U_j,g_{j}^{\mathrm{cone}})\),
which admits an explicit Bessel expansion and sharp Gaussian-type bounds \cite{coneHeatKernelRefs}.

\paragraph{Why renormalization is necessary.}
When \(\Phi,\Psi\) do not vanish at \(D\), the interior convergence on precompact subsets of \(B_{\mathrm{reg}}\) does not by itself
produce an intrinsic global limit: truncating away a neighborhood of \(D\) and sending the cutoff radius to zero creates a
local contribution near each conic point that must be accounted for in a canonical way.
The flat-cone model provides the correct local replacement, and leads to a cutoff-independent target functional.

\paragraph{Renormalized bilinear functional.}
Fix a smooth radial cutoff \(\eta_\rho\) supported near \(r\sim \rho\) and write
\(\Phi=\Phi^{(\rho)}+\Phi^{<\rho}\) with \(\Phi^{(\rho)}:=\eta_\rho\Phi\) and \(\mathrm{supp}(\Phi^{<\rho})\subset\{r<\rho\}\),
and similarly for \(\Psi\). Choose cutoffs \(\chi_j\in C_c^\infty(U_j)\) with \(\chi_j\equiv 1\) near \(p_j\).
We define
\begin{equation}\label{eq:Kren-intro}
\begin{split}
K_B^{\mathrm{ren}}(\Phi,\Psi;\tau)
:= \lim_{\rho\downarrow 0} \Bigg[
& \iint_{B\times B}
  K_B(b,b';\tau)\,
  \Phi^{(\rho)}(b)\Psi^{(\rho)}(b') \\
& \qquad
  d\mu_{g_B}(b)\, d\mu_{g_B}(b') \\
& + \sum_{j=1}^N
  \iint_{U_j\times U_j}
  (\chi_j\otimes\chi_j)\,
  K^{\mathrm{cone}}_j \\
& \qquad
  \Phi^{<\rho}\otimes\Psi^{<\rho}\,
  d\mu^{\otimes 2}_{g^{\mathrm{cone}}_j}
\Bigg].
\end{split}
\end{equation}
A standard conic parametrix for \(K_B\) near \(p_j\) implies that the limit exists and is independent of the choices
of \(\eta_\rho\) and \(\chi_j\); moreover if \(\mathrm{supp}\,\Phi\cup\mathrm{supp}\,\Psi\subset B_{\mathrm{reg}}\), then for $\rho$ small
one has $\Phi^{<\rho}=\Psi^{<\rho}=0$ and the renormalization is trivial, so that \(K_B^{\mathrm{ren}}(\Phi,\Psi;\tau)\) reduces to the
usual base heat-kernel pairing.

\subsection{Main result: renormalized bilinear limit across the discriminant}
We can now state the main theorem in a form suited for applications.

\begin{theorem}[Renormalized bilinear heat-kernel limit across the discriminant]\label{thm:main}
Assume the semi-flat control and the vertical spectral gap on every precompact subset of \(B_{\mathrm{reg}}\), and assume
that near each \(p_j\in D\) the base metric \(g_B\) is \(C^1\)-close to the flat cone \(g_j^{\mathrm{cone}}\) with cone factor \(\alpha_j\).
Let \(\Phi,\Psi\in C_c^\infty(B)\) and \(\tau>0\). Then, with the normalized lifts \(\Phi_t:=I_t\Phi\), \(\Psi_t:=I_t\Psi\),
\begin{equation}\label{eq:main-limit-intro}
\lim_{t\downarrow 0}\ \iint_{X_{t,0}\times X_{t,0}} K_t(x,y;\tau)\,\Phi_t(x)\Psi_t(y)\,d\mu_{g(t)}(x)d\mu_{g(t)}(y)
\ =\ K_B^{\mathrm{ren}}(\Phi,\Psi;\tau),
\end{equation}
where \(K_B^{\mathrm{ren}}\) is the canonical renormalized functional defined in \eqref{eq:Kren-intro}.
If \(\mathrm{supp}\,\Phi\cup\mathrm{supp}\,\Psi\subset B_{\mathrm{reg}}\), then \(K_B^{\mathrm{ren}}(\Phi,\Psi;\tau)\) coincides with the
usual base pairing and \eqref{eq:main-limit-intro} reduces to the interior bilinear limit.

If, in addition, the semi-flat errors are exponentially small (e.g.\ \(\|E_t\|_{C^1}\le Ce^{-c/s^*(t)}\)), then the convergence in
\eqref{eq:main-limit-intro} holds with an explicit exponential rate, uniform for \(\tau\) in compact subintervals of \((0,\infty)\).
\end{theorem}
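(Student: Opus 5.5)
The plan has three stages: interior convergence via an adiabatic (Schur complement) argument, a localization across the discriminant that reduces the global pairing to interior pieces plus conic pieces near each \(p_j\), and bookkeeping of the error terms to extract a rate.

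\textbf{Stage 1: interior compressed-semigroup convergence.} Work on a precompact \(B_0\Subset B_{\mathrm{reg}}\) and split \(L^2(X_{t,0})=\mathrm{Ran}(I_t)\oplus\ker(P_t)\). By the semi-flat control, the quadratic form of \(H_{t,0}\) restricted to \(\mathrm{Ran}(I_t)\) equals, after conjugation by \(I_t\), the form of \(H_{B,0}\) up to \(O(\varepsilon_t)\) relative errors; here I use that \(I_t,P_t\) are near-isometries compatible with horizontal \(H^1\). On \(\ker(P_t)\) the vertical spectral gap gives \(H_{t,0}\ge c_0 s^*(t)^{-2}-C\). The off-diagonal coupling is controlled by \(\|E_t\|_{C^1}\le\varepsilon_t\). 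A resolvent comparison (Schur complement, or equivalently a Kato-type resolvent identity) then yields
\[
\|P_t(H_{t,0}+1)^{-1}I_t-(H_{B,0}+1)^{-1}\|\le C(\varepsilon_t+s^*(t)^2).
\]
Passing to semigroups uses a Dunford contour, or the Trotter--Kato theorem for the qualitative statement. For a rate uniform for \(\tau\in[\tau_0,\tau_1]\), I would write \(e^{-\tau H}\) as a contour integral along a sectorial contour and bound the resolvent difference uniformly on it. The fiber-orthogonal component is killed by \(e^{-\tau c_0 s^*(t)^{-2}}\), which is exponentially small for \(\tau\ge\tau_0\). Combined with \eqref{eq:bilinear-via-compression}, this proves the statement when \(\supp\Phi\cup\supp\Psi\subset B_{\mathrm{reg}}\).

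\textbf{Stage 2: localization across \(D\).} For general \(\Phi,\Psi\), fix \(\rho\) and decompose \(\Phi=\Phi^{(\rho)}+\Phi^{<\rho}\) as in \eqref{eq:Kren-intro}, and likewise for \(\Psi\). Expanding bilinearly gives four kinds of terms.

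1. The \((\rho),(\rho)\) term is supported in \(B_{\mathrm{reg}}\) and converges by Stage 1 to the base pairing.
2. The cross terms and the \(<\rho,<\rho\) terms need a uniform-in-\(t\) smallness estimate near \(D\), which I would obtain from the Cauchy--Schwarz bound \(|\langle T_t(\tau)I_t f,I_tg\rangle|\le\|f\|_{L^2}\|g\|_{L^2}(1+O(\varepsilon_t))\).
3. This is not small by itself, since \(\|\Phi^{<\rho}\|_{L^2}\sim\rho\) while the pairing is an \(O(\rho^2)\) quantity. So I would compare with the cone instead: construct a conic parametrix \(\chi_j K^{\mathrm{cone}}_j\chi_j\) plus an interior kernel, glued by Duhamel's principle. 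Because \(g_B\) is \(C^1\)-close to the cone, the Duhamel remainder is \(o(1)\) in the pairing as \(\rho\to0\).

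The same argument, run on the base alone, shows that the limit in \eqref{eq:Kren-intro} exists and is independent of \(\eta_\rho\) and \(\chi_j\). Different cutoffs differ by terms supported where \(\chi_j\equiv1\) and the cone model is exact up to the \(C^1\) error.

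The main obstacle is the uniformity of the total-space pairing near \(D\), where the semi-flat hypotheses are not assumed. I would try to prove that \(\limsup_{t}\) of the near-\(D\) contributions differs from the cone contribution by \(\epsilon(\rho)\to0\). The tool is finite propagation speed, via Davies--Gaffney off-diagonal estimates, applied to an annulus \(\rho/2<r<\rho\) where semi-flat control does hold. This localizes the total-space heat flow and matches it to the base conic parametrix there.

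\textbf{Stage 3: conclusion and rate.} Let \(t\to0\) first and then \(\rho\to0\). Under exponentially small errors, choose \(\rho=\rho(t)\) decaying like a small power of \(e^{-c/s^*(t)}\) to balance the two error sources: the Stage 1 error, which has constants growing polynomially in \(\rho^{-1}\), and the conic error \(\epsilon(\rho)\), assumed power-like. This gives the explicit exponential rate, uniform for \(\tau\) in compact subsets of \((0,\infty)\).
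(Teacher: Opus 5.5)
Your architecture matches the paper's: interior convergence, the four-term $\rho$-splitting, then $t\downarrow0$ before $\rho\downarrow0$. However, Stage~2 misplaces the difficulty, and the tool you propose for the `main obstacle' would not work.

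\textbf{The near-$D$ step.} There are two reasons the proposed matching fails.
\begin{itemize}
\item The parametrix of Proposition~\ref{prop:conic-parametrix} describes $K_B$, not $K_t$. The hypotheses give no control of $(X_t,g(t))$ uniformly up to the singular fibers, so there is no total-space object near $D$ to match with the cone.
\item At fixed $\tau$, a Davies--Gaffney bound across an annulus of width $\rho/2$ only yields the factor $e^{-\rho^2/(16\tau)}\to1$. It localizes nothing as $\rho\downarrow0$.
\end{itemize}
None of this is needed: the Cauchy--Schwarz bound you discarded already closes the argument. Since $I_t$ is an isometry and $e^{-\tau H_t}$ is a contraction, the cross terms are $O(\rho)$ and the inner--inner term is $O(\rho^2)$, uniformly in $t$ (Lemma~\ref{lem:L2-inner}). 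The cone term in \eqref{eq:Kren-intro} equals $\sum_j\langle\chi_j\Phi^{<\rho},e^{-\tau H^{\mathrm{cone}}_j}\chi_j\Psi^{<\rho}\rangle_{L^2(\mathsf{C}_{\alpha_j})}$, which is also $O(\rho^2)$ by contractivity of the cone semigroup. Because $K_B^{\mathrm{ren}}$ is itself a $\rho\downarrow0$ limit, the errors only need to be $o(1)$; being small relative to $\rho^2$ is beside the point.

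Combined with $\Phi^{(\rho)}\to\Phi$ in $L^2$, this shows $K_B^{\mathrm{ren}}(\Phi,\Psi;\tau)=\langle\Phi,e^{-\tau H_B}\Psi\rangle_{L^2(B_{\mathrm{reg}})}$. Existence and cutoff-independence are therefore immediate, and at fixed $\tau$ no parametrix is needed. The paper (Theorems~\ref{thm:total-to-Kren} and~\ref{thm:main-iterated}) disposes of the mixed terms by exactly this bound (Lemma~\ref{lem:mixed-contract}). For the inner--inner term it appeals to the base parametrix, which likewise says nothing about $K_t$; what actually justifies that step is the same $O(\rho^2)$ estimate.

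\textbf{Item~1 of Stage~2.} This does not follow from Stage~1 as stated. Stage~1 treats truncations $\Pi_t^{-1}(B_0)$ with a boundary condition. The pairing, read as in \eqref{eq:total-bilinear}, uses the heat flow on all of $X_t$, and the target uses the full $K_B$. You therefore need the wall over $\{r=\rho'\}$ to become invisible as $\rho'\downarrow0$, uniformly in $t$. The paper simply assumes this (Assumption~\ref{ass:interior-away-D}), and Davies--Gaffney cannot supply it at fixed $\tau$. It can be proved qualitatively:
\begin{itemize}
\item run the variational argument of Theorem~\ref{thm:resolvent} on all of $X_t$, discarding the nonnegative energy over $\Pi_t^{-1}\{r<\rho'\}$ in the liminf;
\item use that points have zero capacity in dimension two, so the limit lies in the form domain of $H_B$ and recovery sequences can be taken in $C_c^\infty(B_{\mathrm{reg}})$.
\end{itemize}

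\textbf{Stage~1 and the rate.} Your Stage~1 genuinely differs from the paper's Mosco route (Theorems~\ref{thm:mosco}, \ref{thm:resolvent}). That route is purely qualitative, and the operator bound quoted in Proposition~\ref{prop:interior-rate-rho} is never derived there. Your Feshbach--Schur decomposition gives a norm estimate on the compressed resolvent, which is what the rate clause needs. The contour integral then transfers it legitimately to $P_te^{-\tau H_t}I_t$, because compression is linear in the resolvent (unlike powers of $P_t(\alpha+H_t)^{-1}I_t$).

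Drop the $s^*(t)^2$ term, though. The compressed block deviates from $H_{B,0}$ by $O(\varepsilon_t)$, and the Schur correction is quadratic in the $O(\varepsilon_t)$ coupling, so the resolvent error is $O(\varepsilon_t)$. An $s^*(t)^2$ term would make your rate polynomial in $s^*(t)$ rather than exponential.

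Finally, the near-$D$ error is unconditionally $O(\rho)$, so no power-law assumption is needed there. Balancing it against $C(\rho)\varepsilon_t$ still requires hypotheses beyond the statement:
\begin{itemize}
\item a quantitative version of the wall-removal step;
\item at most polynomial growth of $C(\rho)$ in $\rho^{-1}$.
\end{itemize}
You assume the second openly. Proposition~\ref{prop:interior-rate-rho} and Remark~\ref{rem:rho-of-t} take both for granted.
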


\begin{remark}[Compressed semigroups and bilinear kernels]
On precompact \(B_0\Subset B_{\mathrm{reg}}\), the bilinear limit is equivalent to strong convergence of the compressed heat semigroups
\[
P_t e^{-\tau\Delta_{g(t)}}I_t\ \longrightarrow\ e^{-\tau\Delta_{g_B}}
\quad\text{on }L^2(B_0),
\]
and can be derived via Mosco convergence of the associated Dirichlet forms.
In the discriminant regime, the renormalized functional \eqref{eq:Kren-intro} provides a canonical way to combine interior limits
on $B_{\mathrm{reg}}$ with the flat-cone model near $D$; in particular the limiting bilinear object is independent of the auxiliary
cutoffs used to localize near the conic points.
\end{remark}

\subsection{Idea of the proof}
The argument separates naturally into an interior collapse analysis and a conic discriminant analysis.

\smallskip
\noindent\emph{(i) Interior collapse and fiberwise compression.}
On each \(B_0\Subset B_{\mathrm{reg}}\), the semi-flat control provides quantitative disintegration of \(d\mu_{g(t)}\) along the fibers and
near-isometric identifications \((I_t,P_t)\).
The vertical spectral gap yields a Poincar\'e inequality that suppresses fiber-mean-zero components and leads to a coercive
splitting of the Dirichlet energy into a fiber-constant part and a vertical part.
This gives Mosco convergence of Dirichlet forms under the identifications, hence strong resolvent/semigroup convergence
by standard form methods \cite{Attouch,Mosco1969,FukushimaOT}. The adjoint relation \(I_t=P_t^*\) and kernel representations then convert operator
limits into bilinear kernel limits as in \eqref{eq:bilinear-via-compression}, with dominated convergence justified by uniform Gaussian
bounds on bounded-geometry domains \cite{DGRefs,HeatKernelRefs,SaloffCoste}.

\smallskip
\noindent\emph{(ii) Exhaustion and boundary decoupling.}
To globalize from \(B_0\Subset B_{\mathrm{reg}}\) to \(B_{\mathrm{reg}}\), we use a smooth exhaustion and Davies--Gaffney off-diagonal estimates \cite{DGRefs,Ouhabaz}
to show that artificial boundary effects are exponentially small in the exhaustion index on fixed time windows.

\smallskip
\noindent\emph{(iii) Conic parametrix and intrinsic renormalization.}
Near each \(p_j\in D\), we compare the base heat kernel to the flat-cone heat kernel via a conic parametrix
\cite{ConeParametrixRefs,MazzeoEdge,coneHeatKernelRefs}. The remainder has an integrable defect controlled by the \(C^1\)-closeness to the cone.
This yields existence and cutoff-independence of \(K_B^{\mathrm{ren}}\) and, combined with the interior convergence on \(\{r\ge \rho\}\),
produces the renormalized limit \eqref{eq:main-limit-intro} after sending first \(t\downarrow 0\) and then \(\rho\downarrow 0\).
Quantitative bookkeeping is obtained by combining the interior convergence rate, off-diagonal (Davies--Gaffney) decoupling across a
$\rho$-buffer, and the conic parametrix remainder.

\subsection{Further consequences and organization of the paper}
The renormalized bilinear limit has a number of standard spectral consequences after compression, such as stability of localized
spectral data and convergence of localized heat traces. In order to keep the present paper focused on the discriminant mechanism and
the intrinsic renormalization, we confine ourselves to a minimal set of such corollaries.

\medskip
\noindent\textbf{Organization.}
Section~\ref{sec:identifications} develops the semi-flat disintegration and the normalized identifications.
Section~\ref{sec:mosco} proves Mosco convergence of the relevant Dirichlet forms on \(B_0\Subset B_{\mathrm{reg}}\).
Section~\ref{sec:semigroup-kernel} derives strong convergence of compressed heat semigroups and the interior bilinear kernel limit.
Section~\ref{sec:conic-ren} constructs a conic parametrix on the base, defines the cutoff-independent functional
\(K_B^{\mathrm{ren}}\), and identifies it as the global limit object across the discriminant.
Section~\ref{sec:quant-bookkeeping} records quantitative error estimates and an iterated-limit statement.
Finally, Section~\ref{sec:geometric-verification} summarizes verifiable geometric hypotheses in semi-flat elliptic fibrations that imply
the analytic assumptions used throughout.
\section{Semi-flat collapse and normalized identifications}\label{sec:identifications}

This section fixes notation and records the quantitative identifications that allow us to compare
analysis on the varying total spaces $(X_t,g(t))$ with analysis on the fixed base $(B,g_B)$.
The key output is a canonical $L^2$ identification pair
\[
I_t: L^2(B_0)\to L^2(X_{t,0}),\qquad P_t: L^2(X_{t,0})\to L^2(B_0),
\]
satisfying $P_t I_t=\mathrm{Id}$ and $I_t=P_t^*$, where $I_t$ lifts base functions to \emph{fiber-constant}
functions and $P_t$ averages along fibers with the correct normalization. In addition, $I_t$ is compatible
with the horizontal $H^1$ energy up to $O(\varepsilon_t)$ errors under the semi-flat $C^1$ control.

\subsection{Geometric setting on the regular locus}\label{subsec:setting-regular}

Let $(B,g_B)$ be a smooth Riemannian surface and let $D=\{p_1,\dots,p_N\}\subset B$ be a finite set.
We set $B_{\mathrm{reg}}:=B\setminus D$.
For each $t\in(0,t_0]$ we are given a smooth surjective submersion
\[
\Pi_t: X_t \longrightarrow B_{\mathrm{reg}}
\]
whose fibers $F_{b,t}:=\Pi_t^{-1}(b)$ are flat $2$-tori.

Fix once and for all a precompact domain $B_0\Subset B_{\mathrm{reg}}$ with smooth boundary.
We denote the restricted total space by
\[
X_{t,0}:=\Pi_t^{-1}(B_0)\Subset X_t.
\]
Throughout the paper we impose the \emph{same} boundary type on $\partial B_0$ and on
$\partial X_{t,0}=\Pi_t^{-1}(\partial B_0)$; for definiteness this section is written for
Dirichlet boundary conditions (the Neumann and nonnegatively weighted Robin cases are addressed
later, and the constructions below remain unchanged).

\subsection{Semi-flat product control and collapse scale}\label{subsec:semiflat}

We assume that on $X_{t,0}$ the metric $g(t)$ is close to a product metric with exponentially small error.
More precisely, we fix:

\begin{assumption}[Semi-flat $C^1$ product control]\label{ass:sf-C1}
There exist the following objects:
(i) a fixed smooth base metric $g_B$ on $B_0$;
(ii) a family of flat metrics $g_{F,t}$ along the fibers $F_{b,t}$;
(iii) a \emph{product reference metric} $g_t^{\Pi}:=g_B\oplus g_{F,t}$ on $X_{t,0}$ (with respect to a horizontal/vertical splitting);
(iv) a collapse scale $s(t)\downarrow 0$ as $t\downarrow 0$; and
(v) an error gauge $\varepsilon_t\ge 0$ with $\varepsilon_t\to 0$.
such that on $X_{t,0}$ we have
\begin{equation}\label{eq:sf-C1}
g(t)=g_t^{\Pi}+E_t,\qquad \|E_t\|_{C^1(X_{t,0},\,g_t^{\Pi})}\le \varepsilon_t,
\end{equation}
and the fiber diameters satisfy
\begin{equation}\label{eq:fiber-diam}
\mathrm{diam}_{g_{F,t}}(F_{b,t})\le s(t)\qquad \text{for all }b\in B_0.
\end{equation}
\end{assumption}
Such semi-flat/adiabatic product regimes arise in geometric collapse settings 
(e.g.\ elliptic K3 and abelian-fibred Calabi--Yau cases); 
see \cite{GrossWilson,TosattiAdiabatic,GrossTosattiZhang}.

\begin{assumption}[Uniform vertical spectral gap]\label{ass:vertical-gap}
There exists $c_{\mathrm{gap}}>0$ such that for all $b\in B_0$ and all $t\in(0,t_0]$,
\begin{equation}\label{eq:gap}
\lambda_1(F_{b,t},g_{F,t})\ \ge\ c_{\mathrm{gap}}\, s(t)^{-2},
\end{equation}
where $\lambda_1$ is the first nonzero eigenvalue of the (nonnegative) Laplacian on the flat torus fiber.
\end{assumption}

\begin{assumption}[Bounded geometry on the base]\label{ass:base-geometry}
The domain $(B_0,g_B)$ has positive injectivity radius and uniformly bounded curvature
(with finitely many covariant derivatives, sufficient for standard heat kernel bounds on $B_0$).
\end{assumption}

\begin{remark}[On rates]
In applications to SYZ-type collapse one often has $\varepsilon_t\le C e^{-c/s(t)}$.
In the core convergence results we only use $\varepsilon_t\to 0$; the exponential bound is only
needed when we state explicit rates.
\end{remark}

\subsection{Quantitative disintegration of volume}\label{subsec:disintegration}

Write $\mu_t:=\mu_{g(t)}$ for the Riemannian measure on $(X_{t,0},g(t))$ and $\mu_B:=\mu_{g_B}$ for the
base measure on $(B_0,g_B)$.
For each fiber $(F_{b,t},g_{F,t})$ we denote by $d\vartheta_{b,t}$ the Haar \emph{probability} measure.
By disintegration of $\mu_t$ along the submersion $\Pi_t$ (coarea) \cite{FedererGMT}, there exist measurable functions
$A_t:B_0\to(0,\infty)$ and $\rho_t:X_{t,0}\to(0,\infty)$ such that for all $f\in L^1(X_{t,0},\mu_t)$,
\begin{equation}\label{eq:disintegration-normalized}
\int_{X_{t,0}} f\,d\mu_t
=
\int_{B_0} A_t(b)\left(\int_{F_{b,t}} f\,\rho_t\,d\vartheta_{b,t}\right)d\mu_B(b),
\end{equation}
and $\rho_t$ is normalized fiberwise:
\begin{equation}\label{eq:rhot-normalization}
\int_{F_{b,t}} \rho_t\,d\vartheta_{b,t}=1\qquad\text{for all }b\in B_0.
\end{equation}
Heuristically, $A_t(b)$ is the fiber area (at base point $b$) and $\rho_t$ measures the deviation of $\mu_t$
from the product disintegration relative to $g_B\oplus g_{F,t}$.

The semi-flat $C^1$ control implies that $\rho_t$ is close to $1$ and that $A_t$ varies slowly along the base.

\begin{lemma}[Quantitative control of $(A_t,\rho_t)$]\label{lem:At-rhot-control}
Under Assumption~\ref{ass:sf-C1}, there exists $C>0$ (independent of $t$) such that on $X_{t,0}$
\begin{equation}\label{eq:rhot-control}
\|\rho_t-1\|_{L^\infty(X_{t,0})}\ \le\ C\,\varepsilon_t,
\end{equation}
and on $B_0$
\begin{equation}\label{eq:At-control}
\|\nabla_{g_B}\log A_t\|_{L^\infty(B_0)}\ \le\ C\,\varepsilon_t.
\end{equation}
\end{lemma}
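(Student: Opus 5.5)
We work in local coordinates adapted to the horizontal/vertical splitting of Assumption~\ref{ass:sf-C1} and compare the two Riemannian volume forms directly. Over a small coordinate patch $V\subset B_0$ we choose fiber coordinates $\theta=(\theta^1,\theta^2)\in\mathbb{R}^2/\mathbb{Z}^2$, affine for the flat metric $g_{F,t}|_{F_{b,t}}$. In these coordinates $d\vartheta_{b,t}=d\theta$, and the reference volume factorizes as
\[
d\mu_{g_t^\Pi}=\sqrt{\det g_{F,t}(b)}\;d\theta\,d\mu_B(b).
\]
Here $\det g_{F,t}(b)$ is constant along the fiber because the fiber metric is flat. Writing $d\mu_t=J_t\,d\mu_{g_t^\Pi}$ with $J_t:=\sqrt{\det(g_t^\Pi+E_t)/\det g_t^\Pi}$, we obtain $d\mu_t=a_t(b)J_t\,d\theta\,d\mu_B$ with $a_t(b):=\sqrt{\det g_{F,t}(b)}$. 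Comparing with \eqref{eq:disintegration-normalized} and \eqref{eq:rhot-normalization} gives explicit formulas:
\[
A_t(b)=a_t(b)\int_{F_{b,t}}J_t\,d\theta,\qquad \rho_t=\frac{J_t}{\int_{F_{b,t}}J_t\,d\theta}.
\]
These are intrinsic, so they patch across coordinate charts.

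The estimate \eqref{eq:rhot-control} then follows pointwise. Since $|E_t|_{g_t^\Pi}\le\varepsilon_t$, the matrix $(g_t^\Pi)^{-1}E_t$ has norm at most $\varepsilon_t$. Hence $|J_t-1|\le C\varepsilon_t$ once $\varepsilon_t\le 1/2$, using $\det(I+M)^{1/2}=1+O(|M|)$. Consequently the fiber average of $J_t$ also lies in $[1-C\varepsilon_t,1+C\varepsilon_t]$, and the ratio gives $|\rho_t-1|\le C\varepsilon_t$. For large $t$, where $\varepsilon_t$ is not small, the bound holds trivially after enlarging $C$, since $\varepsilon_t$ is bounded below on any range of $t$ away from $0$.

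For \eqref{eq:At-control} we split $\log A_t=\log a_t+\log\int J_t\,d\theta$. The second term is handled by the $C^1$ part of the control. Differentiating $J_t$ horizontally produces $\nabla E_t$ together with products involving $E_t$, so $|\nabla_{g_B}\log\int J_t\,d\theta|\le C\varepsilon_t$, where horizontal derivatives pass under the fiber integral in the trivialization. The first term is where I expect the main obstacle. The gradient $\nabla_{g_B}\log a_t$ is the mean curvature vector of the fibers for the product reference metric, and it is not controlled by $E_t$ at all. I will therefore read ``product reference metric $g_B\oplus g_{F,t}$'' in Assumption~\ref{ass:sf-C1}(iii) as a genuine Riemannian product on $X_{t,0}$. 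Under this reading the fibers are totally geodesic for $g_t^\Pi$ and their area is independent of $b$, so $\nabla a_t=0$. In the semi-flat setting this reflects the fact that the fiber volume form is fixed by the holomorphic volume form and is constant over the base up to instanton corrections, which are absorbed into $E_t$.

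If instead one only allows a twisted product, with $b$-dependent flat fiber metrics, then the bound must be weakened to $\|\nabla\log A_t\|_\infty\le C\varepsilon_t+\|\nabla\log a_t\|_\infty$. In that case I would add to Assumption~\ref{ass:sf-C1} the hypothesis that the fiber area of $g_{F,t}$ is constant in $b$, which is what the semi-flat normalization provides. With either reading settled, collecting the three estimates and taking the maximum of the constants over a finite cover of $\overline{B_0}$ gives a constant $C$ independent of $t$.
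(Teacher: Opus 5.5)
Your argument follows the paper's route: write $d\mu_t=J_t\,d\mu_{g_t^\Pi}$, use the $C^1$ smallness of $E_t$ to get $J_t=1+O(\varepsilon_t)$ with $O(\varepsilon_t)$ horizontal derivatives, and read off $A_t,\rho_t$ from the fiberwise normalization. Your formulas $A_t=a_t\int J_t\,d\theta$ and $\rho_t=J_t/\int J_t\,d\theta$ are the ``translation into the normalized disintegration'' that the paper leaves implicit, and your proof of \eqref{eq:rhot-control} is complete for small $\varepsilon_t$.

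The substantive difference is that you keep track of the reference factor $a_t$ (the paper's $A_t^\Pi$). Your diagnosis is right: $\nabla_{g_B}\log A_t=\nabla_{g_B}\log a_t+O(\varepsilon_t)$, and the paper's sketch silently drops the first term. This is a missing hypothesis in the statement, not a weakness of your method. Under Assumption~\ref{ass:sf-C1} as written, take $E_t\equiv 0$ and $g_{F,t}|_{F_{b,t}}=s(t)^2e^{2f(b)}h_0$, with $h_0$ a fixed flat metric of small diameter and $f$ nonconstant. Then $\rho_t\equiv 1$ but $\nabla_{g_B}\log A_t=2\nabla f$, which is not $O(\varepsilon_t)$. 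So \eqref{eq:At-control} needs exactly the extra condition you propose: constant reference fiber area, or at least $\|\nabla_{g_B}\log A_t^\Pi\|_{L^\infty(B_0)}\le C\varepsilon_t$. When the fiber metrics come from a closed K\"ahler semi-flat form $\omega_t$, this is automatic, because the fiber area $\int_{F_{b,t}}\omega_t$ is the same for all homologous fibers.

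One minor repair: drop the ``large $t$'' remark. Nothing in the assumptions bounds $\varepsilon_t$ below. Moreover, when $\varepsilon_t$ is close to or above $1$ there is no uniform lower bound on $J_t$, so $\rho_t$ need not be bounded. Simply state the estimates for $t$ with $\varepsilon_t\le 1/2$, which is the only regime used later.
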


\begin{proof}[Proof (sketch, with explicit dependencies)]
In a local trivialization $X_{t,0}\simeq B_0\times\mathbb T^2$ compatible with the horizontal/vertical splitting,
the product reference metric $g_t^\Pi=g_B\oplus g_{F,t}$ induces the exact disintegration
\[
d\mu_{g_t^\Pi} = A_t^\Pi(b)\, d\mu_B(b)\, d\vartheta_{b,t},
\]
with $\rho_t\equiv 1$ for the product model.
The $C^1$-smallness \eqref{eq:sf-C1} implies that the Radon--Nikodym derivative of $\mu_t$ relative to $\mu_{g_t^\Pi}$
is $1+O(\varepsilon_t)$ and that its horizontal derivatives are $O(\varepsilon_t)$.
Translating this into the normalized disintegration \eqref{eq:disintegration-normalized} gives
$\rho_t=1+O(\varepsilon_t)$ and $\nabla_{g_B}\log A_t=O(\varepsilon_t)$ uniformly on $B_0$.
A fully detailed coordinate computation is standard in semi-flat settings and is omitted.
\end{proof}

\subsection{Normalized lift and fiber-average operators}\label{subsec:It-Pt}

We can now define the canonical identifications between base and total space.

\begin{definition}[Normalized lift and average]\label{def:It-Pt}
For $v\in L^2(B_0,\mu_B)$ define the \emph{normalized lift} $I_t v\in L^2(X_{t,0},\mu_t)$ by
\begin{equation}\label{eq:It}
(I_t v)(x):=A_t(\Pi_t x)^{-1/2}\,v(\Pi_t x),
\end{equation}
which is fiber-constant by construction.
For $u\in L^2(X_{t,0},\mu_t)$ define the \emph{normalized fiber-average} $P_t u\in L^2(B_0,\mu_B)$ by
\begin{equation}\label{eq:Pt}
(P_t u)(b):=A_t(b)^{1/2}\int_{F_{b,t}} u\,\rho_t\,d\vartheta_{b,t}.
\end{equation}
\end{definition}

\begin{lemma}[Algebraic identities and adjointness]\label{lem:adjointness}
The operators $(I_t,P_t)$ satisfy
\begin{equation}\label{eq:PtIt}
P_t I_t=\mathrm{Id}_{L^2(B_0)},
\end{equation}
and $I_t=P_t^*$ as operators between $L^2(B_0,\mu_B)$ and $L^2(X_{t,0},\mu_t)$.
\end{lemma}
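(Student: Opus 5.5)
The plan is to verify both identities directly from Definition~\ref{def:It-Pt}, using only the normalized disintegration \eqref{eq:disintegration-normalized} and the fiberwise normalization \eqref{eq:rhot-normalization}. No estimates from Lemma~\ref{lem:At-rhot-control} are needed: the statement is purely algebraic, and $A_t,\rho_t>0$ makes all the factors $A_t^{\pm 1/2}$ well defined.

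\emph{Step 1: $P_tI_t=\mathrm{Id}$.} Take $v\in L^2(B_0,\mu_B)$. The lift $I_tv$ is constant on each fiber $F_{b,t}$, with value $A_t(b)^{-1/2}v(b)$, so this factor comes out of the fiber integral:
\[
(P_tI_tv)(b)=A_t(b)^{1/2}A_t(b)^{-1/2}v(b)\int_{F_{b,t}}\rho_t\,d\vartheta_{b,t}=v(b).
\]
The last equality is \eqref{eq:rhot-normalization}.

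\emph{Step 2: boundedness.} This step makes the adjoint computation legitimate. Applying \eqref{eq:disintegration-normalized} to $f=|I_tv|^2$ and using \eqref{eq:rhot-normalization} again gives
\[
\|I_tv\|_{L^2(\mu_t)}^2=\int_{B_0}A_t\,A_t^{-1}|v|^2\,d\mu_B=\|v\|_{L^2(\mu_B)}^2,
\]
so $I_t$ is an isometry. For $P_t$, apply Cauchy--Schwarz with respect to the probability measure $\rho_t\,d\vartheta_{b,t}$:
\[
|P_tu(b)|^2\le A_t(b)\int_{F_{b,t}}|u|^2\rho_t\,d\vartheta_{b,t}.
\]
Integrating in $b$ and using \eqref{eq:disintegration-normalized} gives $\|P_tu\|\le\|u\|$. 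Hence both operators are bounded, and Fubini is justified in Step 3 because $u\,\overline{I_tv}\in L^1(\mu_t)$.

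\emph{Step 3: $I_t=P_t^*$.} Take $u\in L^2(X_{t,0},\mu_t)$ and $v\in L^2(B_0,\mu_B)$. By \eqref{eq:disintegration-normalized},
\[
\langle u,I_tv\rangle_{L^2(\mu_t)}=\int_{B_0}A_t(b)\Big(\int_{F_{b,t}}u\,A_t(b)^{-1/2}\overline{v(b)}\,\rho_t\,d\vartheta_{b,t}\Big)d\mu_B(b).
\]
Factoring out the fiber-constant terms, the right-hand side equals
\[
\int_{B_0}A_t^{1/2}\Big(\int_{F_{b,t}}u\,\rho_t\,d\vartheta_{b,t}\Big)\overline{v}\,d\mu_B=\langle P_tu,v\rangle_{L^2(\mu_B)}.
\]
This is exactly $I_t=P_t^*$.

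The only point needing care is measurability and integrability in the disintegration step. It is covered by the coarea formula together with Step 2, so I expect no real obstacle. A side remark: $I_tP_t$ is then the orthogonal projection onto the fiber-constant functions.
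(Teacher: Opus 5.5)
Your proposal is correct and follows the paper's proof. The identity $P_tI_t=\mathrm{Id}$ comes from the fiberwise normalization $\int_{F_{b,t}}\rho_t\,d\vartheta_{b,t}=1$, and $I_t=P_t^*$ comes from inserting the disintegration into the inner product and pulling out the fiber-constant factors; your Step~2 (isometry of $I_t$, contractivity of $P_t$) is what the paper proves separately in the following Lemma~\ref{lem:L2-nearisometry}, and putting it first is a sensible way to justify the integrability used in the adjoint computation.
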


\begin{proof}
Let $v\in L^2(B_0)$. By \eqref{eq:Pt}, \eqref{eq:It}, and the normalization \eqref{eq:rhot-normalization},
\[
(P_t I_t v)(b)
=
A_t(b)^{1/2}\int_{F_{b,t}} A_t(b)^{-1/2}v(b)\,\rho_t\,d\vartheta_{b,t}
=
v(b).
\]
For adjointness, let $\phi\in L^2(B_0)$ and 
$u\in L^2(X_{t,0})$. 
Using \eqref{eq:disintegration-normalized}, we compute
\begin{align}
\langle I_t\phi,u\rangle_{L^2(X_{t,0})}
&= \int_{B_0} A_t(b)
   \left(
     \int_{F_{b,t}} 
       A_t(b)^{-1/2}\phi(b)\,u\,
       \rho_t\, d\vartheta_{b,t}
   \right)
   d\mu_B(b)
\nonumber\\
&= \int_{B_0} \phi(b)\,(P_tu)(b)\, d\mu_B(b)
\nonumber\\
&= \langle \phi, P_tu\rangle_{L^2(B_0)}.
\end{align}
Thus $I_t = P_t^*$.
\end{proof}

\begin{lemma}[$L^2$ isometry and contraction]\label{lem:L2-nearisometry}
For all $v\in L^2(B_0)$ and all $u\in L^2(X_{t,0})$,
\begin{equation}\label{eq:It-L2}
\|I_t v\|_{L^2(X_{t,0})}=\|v\|_{L^2(B_0)},
\end{equation}
and
\begin{equation}\label{eq:Pt-L2}
\|P_t u\|_{L^2(B_0)} \le \|u\|_{L^2(X_{t,0})}.
\end{equation}
\end{lemma}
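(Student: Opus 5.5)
The plan is to prove both claims by direct computation with the normalized disintegration \eqref{eq:disintegration-normalized}, relying on the identities already established in Lemma~\ref{lem:adjointness}.

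\emph{Isometry of \(I_t\).} For \(v\in L^2(B_0)\), the function \(|I_tv|^2=A_t(\Pi_t x)^{-1}|v(\Pi_tx)|^2\) is fiber-constant. Integrating it via \eqref{eq:disintegration-normalized}, the inner fiber integral equals \(A_t(b)^{-1}|v(b)|^2\int_{F_{b,t}}\rho_t\,d\vartheta_{b,t}\). By \eqref{eq:rhot-normalization} this is \(A_t(b)^{-1}|v(b)|^2\). Multiplying by the outer weight \(A_t(b)\) gives \(\int_{B_0}|v|^2\,d\mu_B\), which yields \eqref{eq:It-L2}. An alternative route is the identity \(\|I_tv\|^2=\langle v,P_tI_tv\rangle\), using \(I_t=P_t^*\) together with \eqref{eq:PtIt}.

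\emph{Contraction of \(P_t\).} Since \(P_t=I_t^*\) (the adjoint of an isometry), \(\|P_t\|=\|I_t\|=1\), which gives \eqref{eq:Pt-L2} at once. For a hands-on check, one applies Cauchy--Schwarz in each fiber with respect to the probability measure \(\rho_t\,d\vartheta_{b,t}\). This measure is positive since \(\rho_t>0\), and it has total mass one by \eqref{eq:rhot-normalization}. The estimate reads
\[
|P_tu(b)|^2=A_t(b)\Big|\int_{F_{b,t}}u\,\rho_t\,d\vartheta_{b,t}\Big|^2\le A_t(b)\int_{F_{b,t}}|u|^2\rho_t\,d\vartheta_{b,t}.
\]
Integrating over \(B_0\) against \(\mu_B\) and applying \eqref{eq:disintegration-normalized} to \(f=|u|^2\) recovers \(\|u\|^2_{L^2(X_{t,0})}\).

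\emph{Main point to verify.} There is no real obstacle here. The one thing to check is that the computation is legitimate for general \(L^2\) functions, namely that \(|u|^2\in L^1(\mu_t)\) so that \eqref{eq:disintegration-normalized} applies, and that \(P_tu\) is well defined for a.e.\ \(b\). Both follow from Fubini/Tonelli applied to the nonnegative function \(|u|^2\). Along the way one also sees that \(I_tP_t\) is the orthogonal projection onto fiber-constant functions, since it is self-adjoint and idempotent. This immediately gives the decomposition \(u=I_tP_tu+(u-I_tP_tu)\) that will be used later.
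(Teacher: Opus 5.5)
Your proof is correct and takes essentially the same route as the paper. The isometry of $I_t$ is the same direct computation via the normalized disintegration and $\int_{F_{b,t}}\rho_t\,d\vartheta_{b,t}=1$, and your fiberwise Cauchy--Schwarz step against the probability measure $\rho_t\,d\vartheta_{b,t}$ is exactly the paper's Jensen argument for \eqref{eq:Pt-L2}; the shortcut $\|P_t\|=\|I_t^*\|=1$ via Lemma~\ref{lem:adjointness} is a valid extra observation that the paper does not use here.
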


\begin{proof}
By \eqref{eq:It}, \eqref{eq:disintegration-normalized}, and \eqref{eq:rhot-normalization},
\[
\|I_t v\|_{L^2(X_{t,0})}^2
=\int_{B_0}A_t(b)\left(\int_{F_{b,t}} A_t(b)^{-1}|v(b)|^2\,\rho_t\,d\vartheta_{b,t}\right)d\mu_B(b)
=\int_{B_0}|v(b)|^2\,d\mu_B(b).
\]
For \eqref{eq:Pt-L2}, note that $\rho_t\,d\vartheta_{b,t}$ is a probability measure on each fiber, so by Jensen
\[
|(P_tu)(b)|^2
= A_t(b)\left|\int_{F_{b,t}} u\,\rho_t\,d\vartheta_{b,t}\right|^2
\le A_t(b)\int_{F_{b,t}}|u|^2\,\rho_t\,d\vartheta_{b,t}.
\]
Integrating in $b$ and using \eqref{eq:disintegration-normalized} gives $\|P_tu\|_{L^2(B_0)}^2\le \|u\|_{L^2(X_{t,0})}^2$.
\end{proof}

\subsection{Horizontal $H^1$ compatibility for fiber-constant lifts}\label{subsec:H1-compat}

The collapsed limit is governed by the horizontal energy. Since $I_t v$ is fiber-constant, its vertical gradient vanishes.
The next lemma quantifies that its energy is close to the base energy, up to a controlled lower-order term coming from
the normalization $A_t^{-1/2}$.

\begin{lemma}[Horizontal energy of fiber-constant lifts]\label{lem:H1-lift}
Assume Lemma~\ref{lem:At-rhot-control} and the $C^1$ semi-flat control \eqref{eq:sf-C1}.
Then there exists $C>0$ independent of $t$ such that for all $v\in H^1(B_0)$,
\begin{equation}\label{eq:H1-lift}
\int_{X_{t,0}} |\nabla I_t v|_{g(t)}^2\,d\mu_t
=
\int_{B_0} |\nabla v|_{g_B}^2\,d\mu_B
\ +\ O(\varepsilon_t)\Big(\|v\|_{H^1(B_0)}^2\Big).
\end{equation}
In particular, $I_t:H^1(B_0)\to H^1(X_{t,0})$ is bounded uniformly in $t$.
\end{lemma}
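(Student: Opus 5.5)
The plan is to compute $\nabla(I_t v)$ directly in a local trivialization $X_{t,0}\simeq B_0\times\mathbb T^2$ adapted to the horizontal/vertical splitting of Assumption~\ref{ass:sf-C1}, and then integrate using the disintegration \eqref{eq:disintegration-normalized}.

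Since $I_t v = (A_t^{-1/2}v)\circ\Pi_t$ is fiber-constant, its differential is the pullback $\Pi_t^*d(A_t^{-1/2}v)$. The product rule gives
\[
d(A_t^{-1/2}v)=A_t^{-1/2}\bigl(dv-\tfrac12 v\,d\log A_t\bigr),
\]
and by \eqref{eq:At-control} the second term is pointwise $O(\varepsilon_t)|v|$. So the task splits into two comparisons. First, compare $|\Pi_t^*\omega|^2_{g(t)}$ with $|\omega|^2_{g_B}\circ\Pi_t$ for a base covector $\omega$. For the reference metric $g_t^\Pi=g_B\oplus g_{F,t}$, the pullback of a base covector is purely horizontal, so $|\Pi_t^*\omega|_{g_t^\Pi}=|\omega|_{g_B}$ exactly. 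Writing $g(t)^{-1}=(g_t^\Pi)^{-1}+O(\varepsilon_t)$ in $C^0$ via \eqref{eq:sf-C1} and a Neumann series (for $\varepsilon_t$ small), we get $|\Pi_t^*\omega|^2_{g(t)}=(1+O(\varepsilon_t))|\omega|^2_{g_B}$ uniformly. This step is a $C^0$ statement about the metric only, so the $C^1$ control is needed only through Lemma~\ref{lem:At-rhot-control}.

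Second, integrate. By \eqref{eq:disintegration-normalized} and \eqref{eq:rhot-normalization}, for a fiber-constant integrand $h\circ\Pi_t$ we have $\int_{X_{t,0}}h\circ\Pi_t\,d\mu_t=\int_{B_0}A_t h\,d\mu_B$. The factor $A_t$ cancels the $A_t^{-1}$ coming from squaring $A_t^{-1/2}$. This is the point of the normalization, and it is where I expect the main subtlety: one must check that no residual $A_t$ dependence survives. That is automatic here because the integrand is exactly fiber-constant, so $\rho_t$ integrates to $1$ and even \eqref{eq:rhot-control} is not needed. The result is
\[
\int_{X_{t,0}}|\nabla I_tv|^2_{g(t)}d\mu_t=(1+O(\varepsilon_t))\int_{B_0}\bigl|dv-\tfrac12 v\,d\log A_t\bigr|^2_{g_B}d\mu_B .
\]

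Finally, expand the square. The cross term is bounded by $\|d\log A_t\|_\infty\|v\|_{L^2}\|\nabla v\|_{L^2}\le C\varepsilon_t\|v\|_{H^1}^2$, and the quadratic term is $O(\varepsilon_t^2)\|v\|_{L^2}^2$. The multiplicative $O(\varepsilon_t)$ factor contributes $O(\varepsilon_t)\|\nabla v\|^2_{L^2}$. Together these give \eqref{eq:H1-lift}. Uniform boundedness of $I_t:H^1\to H^1$ then follows by combining this with the isometry \eqref{eq:It-L2} from Lemma~\ref{lem:L2-nearisometry}. For the Dirichlet case, note also that $I_t$ maps $H^1_0(B_0)$ into $H^1_0(X_{t,0})$, since lifts of compactly supported smooth functions are compactly supported and smooth, and density then extends this to $H^1_0$.
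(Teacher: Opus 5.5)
Your proposal is correct and follows essentially the paper's argument. Both proofs apply the product rule to $A_t^{-1/2}v\circ\Pi_t$, integrate the fiber-constant integrand via the disintegration with $\int_{F_{b,t}}\rho_t\,d\vartheta_{b,t}=1$ so that $A_t$ cancels, bound the cross term by Cauchy--Schwarz using $\|\nabla_{g_B}\log A_t\|_{L^\infty}\le C\varepsilon_t$, and pay an $O(\varepsilon_t)$ cost for replacing $g(t)$ by $g_t^\Pi$.

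The only difference is the order of steps: you compare metrics pointwise on covectors before integrating, whereas the paper integrates with the product metric first and then swaps in $g(t)$. Strictly speaking, your $g(t)$-norm integrand is then fiber-constant only up to a factor in $[1-C\varepsilon_t,1+C\varepsilon_t]$, but since the integrand is nonnegative this is harmless.
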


\begin{proof}[Proof (outline with explicit computation)]
Work in semi-flat coordinates for $g_t^\Pi=g_B\oplus g_{F,t}$.
Since $I_t v=A_t^{-1/2}\,v\circ\Pi_t$, the vertical gradient vanishes for the product metric:
$\nabla_{F}(I_t v)\equiv 0$.
For the horizontal gradient,
\[
\nabla_B(I_t v)
= A_t^{-1/2}\,\nabla v \circ \Pi_t\ -\ \frac12 A_t^{-1/2}\,v\circ\Pi_t\,\nabla_B\log A_t.
\]
Squaring, integrating with $d\mu_t$, and using \eqref{eq:disintegration-normalized} together with
$\int_{F_{b,t}}\rho_t\,d\vartheta_{b,t}=1$ yields the identity
\[
\int_{X_{t,0}} |\nabla_B(I_t v)|_{g_B}^2\,d\mu_t
=
\int_{B_0}\Big(|\nabla v|_{g_B}^2
- \langle \nabla v, v\,\nabla_B\log A_t\rangle_{g_B}
+\frac14 |v|^2 |\nabla_B\log A_t|_{g_B}^2\Big)\,d\mu_B.
\]
The cross term is bounded by Cauchy--Schwarz and \eqref{eq:At-control}, giving an $O(\varepsilon_t)\|v\|_{H^1}^2$ contribution.
Finally, replacing $g(t)$ by $g_t^\Pi$ introduces an additional $O(\varepsilon_t)\|v\|_{H^1}^2$ error by \eqref{eq:sf-C1}.
Combining these estimates gives \eqref{eq:H1-lift}.
\end{proof}

\subsection{Fiber-constant projection (notation)}\label{subsec:projection}

We will repeatedly separate fiber-constant and fiber-oscillatory components in $L^2(X_{t,0})$.
To avoid confusion with the fibration map $\Pi_t:X_t\to B_{\mathrm{reg}}$, we denote the $L^2$ projection onto
the fiber-constant sector by $\Pi_t^{\mathrm{fc}}$.

\begin{definition}[Fiber-constant projection]\label{def:projection}
Define the projection onto the fiber-constant sector by
\[
\Pi_t^{\mathrm{fc}}:=I_tP_t: L^2(X_{t,0})\to L^2(X_{t,0}),
\]
and its complement by $(\Pi_t^{\mathrm{fc}})^\perp:=\mathrm{Id}-\Pi_t^{\mathrm{fc}}$.
\end{definition}

By Lemma~\ref{lem:adjointness}, $\Pi_t^{\mathrm{fc}}$ is self-adjoint and $(\Pi_t^{\mathrm{fc}})^2=\Pi_t^{\mathrm{fc}}$ on $L^2(X_{t,0})$.

\subsection{Section summary and next steps}\label{subsec:summary}
Assumptions~\ref{ass:sf-C1}--\ref{ass:base-geometry} yield a canonical $L^2$ identification pair $(I_t,P_t)$
with $P_tI_t=\mathrm{Id}$ and $I_t=P_t^*$, together with $L^2$ isometry/contraction
(Lemma~\ref{lem:L2-nearisometry}) and horizontal $H^1$ compatibility for fiber-constant lifts
(Lemma~\ref{lem:H1-lift}).

In the next section we introduce the Dirichlet forms and generators, use the vertical spectral gap
Assumption~\ref{ass:vertical-gap} to obtain a vertical Poincar\'e inequality for $(\Pi_t^{\mathrm{fc}})^\perp$, and then prove
Mosco convergence of Dirichlet forms on the identified spaces. This will lead to strong convergence of the
compressed heat semigroups and the interior bilinear heat-kernel limit on $B_0\Subset B_{\mathrm{reg}}$.
\section{Dirichlet forms, vertical Poincar\'e inequality, and Mosco convergence}\label{sec:mosco}

Throughout this section we fix a precompact domain $B_0\Subset B_{\mathrm{reg}}$ with smooth boundary and set
$X_{t,0}:=\Pi_t^{-1}(B_0)$. We work under Dirichlet boundary conditions on $\partial B_0$ and $\partial X_{t,0}$
(the Neumann and nonnegatively weighted Robin cases are treated later and do not affect the arguments below).

\subsection{Dirichlet forms on the base and on the total space}\label{subsec:forms}

Let $\mu_B:=\mu_{g_B}$ and $\mu_t:=\mu_{g(t)}$ denote the Riemannian measures on $B_0$ and $X_{t,0}$.
We write
\[
L^2_B:=L^2(B_0,\mu_B),\qquad L^2_t:=L^2(X_{t,0},\mu_t).
\]

\paragraph{Base Dirichlet form.}
Define the closed Dirichlet form
\begin{equation}\label{eq:EB}
E_B[v]:=\int_{B_0}|\nabla v|_{g_B}^2\,d\mu_B,\qquad \mathrm{Dom}(E_B)=H^1_0(B_0).
\end{equation}

\paragraph{Total-space Dirichlet form.}
Define
\begin{equation}\label{eq:Et}
E_t[u]:=\int_{X_{t,0}}|\nabla u|_{g(t)}^2\,d\mu_t,\qquad \mathrm{Dom}(E_t)=H^1_0(X_{t,0}).
\end{equation}
Both forms are densely defined, closed, Markovian, and generate nonnegative selfadjoint operators
$H_B$ and $H_t$ via the Friedrichs construction (we will use this in the next section); see \cite{FukushimaOT,Ouhabaz}.

\subsection{Normalized identifications and the fiber-constant projection}\label{subsec:identifications-recall}

We recall the normalized disintegration and the lift/average operators from Section~\ref{sec:identifications},
but we record explicitly the normalization used in the present paper.

\begin{assumption}[Normalized disintegration on $B_0$]\label{ass:disintegration}
There exist measurable functions $A_t:B_0\to(0,\infty)$ and $\rho_t:X_{t,0}\to(0,\infty)$ such that for all
$f\in L^1(X_{t,0},\mu_t)$,
\begin{equation}\label{eq:disintegration}
\int_{X_{t,0}} f\,d\mu_t
=
\int_{B_0}A_t(b)\left(\int_{F_{b,t}} f\,\rho_t\,d\vartheta_{b,t}\right)d\mu_B(b),
\end{equation}
where $d\vartheta_{b,t}$ is the Haar probability measure on the flat torus $(F_{b,t},g_{F,t})$, and
\begin{equation}\label{eq:rho-normalization}
\int_{F_{b,t}}\rho_t\,d\vartheta_{b,t}=1\qquad\text{for all }b\in B_0.
\end{equation}
Moreover, under the semi-flat $C^1$ control, we have the quantitative bounds
\begin{equation}\label{eq:rhoA-bounds}
\|\rho_t-1\|_{L^\infty(X_{t,0})}
+\|\nabla_{g_B}\log A_t\|_{L^\infty(B_0)}
+\|\nabla_B\rho_t\|_{L^\infty(X_{t,0})}
\ \le\ C\,\varepsilon_t,
\end{equation}
with $\varepsilon_t\downarrow 0$. Here $\nabla_B\rho_t$ denotes the horizontal (base-direction) derivative
with respect to the product splitting induced by the reference metric $g_t^\Pi=g_B\oplus g_{F,t}$ in local
semi-flat coordinates.
\end{assumption}

\begin{definition}[Normalized lift and fiber-average]\label{def:ItPt-correct}
Define bounded linear maps $I_t:L^2_B\to L^2_t$ and $P_t:L^2_t\to L^2_B$ by
\begin{equation}\label{eq:ItPt}
(I_tv)(x):=A_t(\Pi_t x)^{-1/2}\,v(\Pi_t x),\qquad
(P_tu)(b):=A_t(b)^{1/2}\int_{F_{b,t}} u\,\rho_t\,d\vartheta_{b,t}.
\end{equation}
\end{definition}

\begin{lemma}[Adjoint lock and projection]\label{lem:adjoint-lock}
The maps $(I_t,P_t)$ satisfy
\begin{equation}\label{eq:PtIt-adjoint}
P_tI_t=\mathrm{Id}_{L^2(B_0)},\qquad I_t=P_t^*.
\end{equation}
In particular, the operator
\begin{equation}\label{eq:Pi-def}
\Pi_t^{\mathrm{fc}}:=I_tP_t:L^2_t\to L^2_t
\end{equation}
is the $L^2_t$-orthogonal projection onto the closed subspace of fiber-constant functions, and
$(\Pi_t^{\mathrm{fc}})^\perp:=\mathrm{Id}-\Pi_t^{\mathrm{fc}}$ is the orthogonal projection onto its complement.
\end{lemma}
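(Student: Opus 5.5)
The two identities in \eqref{eq:PtIt-adjoint} are exactly Lemma~\ref{lem:adjointness}. Its proof used only the normalized disintegration \eqref{eq:disintegration} and the fiberwise normalization \eqref{eq:rho-normalization}, both of which are part of Assumption~\ref{ass:disintegration}. So I will not redo that computation. I will recall that $P_tI_tv=v$ follows by inserting \eqref{eq:ItPt} and using $\int_{F_{b,t}}\rho_t\,d\vartheta_{b,t}=1$. Likewise, $\langle I_t\phi,u\rangle_{L^2_t}=\langle\phi,P_tu\rangle_{L^2_B}$ follows by applying \eqref{eq:disintegration} to $f=(I_t\phi)\,u$, which lies in $L^1$ by Cauchy--Schwarz and the isometry of Lemma~\ref{lem:L2-nearisometry}. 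The factors $A_t$ cancel as $A_t\cdot A_t^{-1/2}=A_t^{1/2}$.

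For the projection statement I will argue abstractly. Set $\Pi:=I_tP_t$.
\begin{itemize}
\item Self-adjointness: $\Pi^*=P_t^*I_t^*=I_tP_t=\Pi$, using $I_t=P_t^*$ and hence $I_t^*=P_t$, since both maps are bounded.
\item Idempotence: $\Pi^2=I_t(P_tI_t)P_t=I_tP_t=\Pi$.
\end{itemize}
A bounded self-adjoint idempotent is the orthogonal projection onto its range. Its range is closed because it equals $\ker(\mathrm{Id}-\Pi)$.

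It remains to identify $\operatorname{ran}\Pi$ with the space $\mathcal F_t$ of fiber-constant functions in $L^2_t$, meaning functions of the form $w\circ\Pi_t$ up to $\mu_t$-null sets.
\begin{itemize}
\item $\operatorname{ran}\Pi\subset\mathcal F_t$: every $I_tv$ is visibly fiber-constant.
\item $\mathcal F_t\subset\operatorname{ran}\Pi$: given $u=w\circ\Pi_t\in L^2_t$, put $v:=A_t^{1/2}w$. By \eqref{eq:disintegration} and \eqref{eq:rho-normalization}, $\|v\|_{L^2_B}^2=\int_{B_0}A_t|w|^2\,d\mu_B=\|u\|_{L^2_t}^2<\infty$, so $v\in L^2_B$. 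Then $I_tv=A_t^{-1/2}A_t^{1/2}w\circ\Pi_t=u$, so $u=I_tv=I_t(P_tI_tv)=\Pi u$.
\end{itemize}
Thus $\operatorname{ran}\Pi=\mathcal F_t$, and in particular $\mathcal F_t$ is closed. The complementary statement is then immediate: $\mathrm{Id}-\Pi$ is the orthogonal projection onto $\mathcal F_t^\perp$.

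The only delicate point is measure-theoretic. One must make precise that ``fiber-constant'' means $\mu_t$-a.e.\ equal to a pullback, and that the $\mu_t$-null sets correspond to pullbacks of $\mu_B$-null sets. The latter holds by \eqref{eq:disintegration} together with $A_t>0$ and $\rho_t>0$. I would handle this with a single sentence invoking the disintegration.
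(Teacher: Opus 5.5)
Your proposal is correct and follows the paper's proof. Both derive $P_tI_t=\mathrm{Id}$ and $I_t=P_t^*$ from the disintegration \eqref{eq:disintegration} and the normalization \eqref{eq:rho-normalization}, and then read off the projection property of $\Pi_t^{\mathrm{fc}}=I_tP_t$. Your explicit check that the range of $I_tP_t$ is exactly the space of fiber-constant $L^2_t$ functions (via $v=A_t^{1/2}w$ and the null-set correspondence coming from $A_t>0$) is a step the paper leaves implicit, and you handle it correctly.
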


\begin{proof}
The identity $P_tI_t=\mathrm{Id}$ follows immediately from \eqref{eq:ItPt} and \eqref{eq:rho-normalization}.
For adjointness, compute using \eqref{eq:disintegration}:
for $\phi\in L^2_B$ and $u\in L^2_t$,
\[
\langle I_t\phi,u\rangle_{L^2_t}
=\int_{B_0}A_t(b)\left(\int_{F_{b,t}}A_t(b)^{-1/2}\phi(b)\,u\,\rho_t\,d\vartheta_{b,t}\right)d\mu_B(b)
=\int_{B_0}\phi(b)\, (P_tu)(b)\,d\mu_B(b)
=\langle \phi,P_tu\rangle_{L^2_B}.
\]
Thus $I_t=P_t^*$, hence $\Pi_t^{\mathrm{fc}}=I_tP_t$ is an orthogonal projection.
\end{proof}

\subsection{Vertical Poincar\'e inequality from the spectral gap}\label{subsec:vertical-poincare}

We now implement the first link in the chain:
\[
\begin{aligned}
\text{vertical spectral gap}
&\Longrightarrow \text{vertical Poincar\'e inequality} \\
&\Longrightarrow \text{suppression of nonconstant fiber modes at low energy}.
\end{aligned}
\]

Let $\nabla_F$ denote the vertical gradient with respect to the flat fiber metric $g_{F,t}$ (equivalently,
the vertical part of the product metric $g_t^\Pi=g_B\oplus g_{F,t}$).

\begin{lemma}[Fiberwise weighted Poincar\'e inequality]\label{lem:fiberwise-poincare}
Assume \eqref{eq:rhoA-bounds} and the spectral gap \eqref{eq:gap}. Then there exists $C>0$ independent of
$t$ and $b\in B_0$ such that for every $u\in H^1(F_{b,t})$,
\begin{equation}\label{eq:fiberwise-poincare}
\int_{F_{b,t}} \Big|u-\int_{F_{b,t}}u\,\rho_t\,d\vartheta_{b,t}\Big|^2 \rho_t\,d\vartheta_{b,t}
\ \le\ C\, s(t)^2 \int_{F_{b,t}}|\nabla_F u|_{g_{F,t}}^2\,\rho_t\,d\vartheta_{b,t}.
\end{equation}
\end{lemma}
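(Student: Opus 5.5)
The plan is to reduce the weighted inequality \eqref{eq:fiberwise-poincare} to the unweighted Poincar\'e inequality on the flat torus, which follows from the spectral gap \eqref{eq:gap}, and then to account for the weight $\rho_t$ using the bound $\|\rho_t-1\|_{L^\infty}\le C\varepsilon_t$ from \eqref{eq:rhoA-bounds}. Only this $L^\infty$ closeness is needed; the horizontal derivative bound on $\rho_t$ plays no role, since the inequality is purely fiberwise.

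\emph{Step 1 (unweighted Poincar\'e).} Write $\bar u:=\int_{F_{b,t}}u\,d\vartheta_{b,t}$ for the Haar mean. Expanding $u-\bar u$ in eigenfunctions of the flat fiber Laplacian (Fourier modes on the torus) and applying the min--max characterization of $\lambda_1$ together with \eqref{eq:gap} gives
\[
\int_{F_{b,t}}|u-\bar u|^2\,d\vartheta_{b,t}\le \lambda_1^{-1}\int_{F_{b,t}}|\nabla_F u|^2_{g_{F,t}}\,d\vartheta_{b,t}\le c_{\mathrm{gap}}^{-1}s(t)^2\int_{F_{b,t}}|\nabla_F u|^2_{g_{F,t}}\,d\vartheta_{b,t}.
\]
The Haar probability normalization is harmless here, because the Rayleigh quotient is scale invariant in the measure.

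\emph{Step 2 (change of centering constant).} The weighted mean $m_\rho:=\int u\,\rho_t\,d\vartheta_{b,t}$ minimizes $c\mapsto\int|u-c|^2\rho_t\,d\vartheta_{b,t}$, since $\rho_t\,d\vartheta_{b,t}$ is a probability measure by \eqref{eq:rho-normalization}. Therefore the left side of \eqref{eq:fiberwise-poincare} is at most $\int|u-\bar u|^2\rho_t\,d\vartheta_{b,t}$. This step avoids any direct comparison of the two means.

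\emph{Step 3 (weight comparison).} Because $\varepsilon_t\to0$, we may restrict to $t\le t_1$ with $C\varepsilon_t\le 1/2$, so that $1/2\le\rho_t\le 3/2$. Then
\[
\int|u-\bar u|^2\rho_t\,d\vartheta\le \tfrac32\int|u-\bar u|^2\,d\vartheta
\qquad\text{and}\qquad
\int|\nabla_F u|^2\,d\vartheta\le 2\int|\nabla_F u|^2\rho_t\,d\vartheta.
\]
Chaining these with Steps 1 and 2 yields \eqref{eq:fiberwise-poincare} with $C=3c_{\mathrm{gap}}^{-1}$, uniformly in $b\in B_0$ and $t\le t_1$.

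The only potential obstacle is the range $t\in(t_1,t_0]$, where $\rho_t$ is not yet known to be close to $1$. There are two ways to handle it. One option is to shrink $t_0$, which is harmless because all statements are asymptotic as $t\downarrow0$. Alternatively, if a uniform two-sided bound $0<c\le\rho_t\le C$ holds on compact parameter sets (by continuity and compactness of $\overline{B_0}$), the same chaining argument gives a constant that depends only on those bounds. I would adopt the first option and record that $t_0$ is taken small enough that $C\varepsilon_t\le1/2$.
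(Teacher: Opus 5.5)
Your proof is correct. Its skeleton matches the paper's: an unweighted Poincar\'e inequality on the flat torus from the gap \eqref{eq:gap}, followed by a comparison of $\rho_t\,d\vartheta_{b,t}$ with $d\vartheta_{b,t}$.

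The one genuine difference is how you change the centering constant. The paper estimates the gap between the two means directly. It writes $m-m_\rho=\int(u-m)(1-\rho_t)\,d\vartheta_{b,t}$, bounds this by $C\varepsilon_t\|u-m\|_{L^2(d\vartheta)}$ via Cauchy--Schwarz, and then uses the triangle inequality, which costs a factor $(1+C\varepsilon_t)$.

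You instead observe that $m_\rho$ is the best constant approximation of $u$ in $L^2(\rho_t\,d\vartheta_{b,t})$, so replacing it by the Haar mean can only increase the left side. This step costs nothing and uses only the normalization \eqref{eq:rho-normalization}. It also shows that smallness of $\rho_t-1$ enters only through the two-sided bounds in the weight comparison, so any uniform bounds $0<c\le\rho_t\le C$ would do. In exchange, the paper's route shows that the two means differ by $O(\varepsilon_t)$ relative to the oscillation, but the lemma does not need this.

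Your explicit restriction to $t$ with $C\varepsilon_t\le 1/2$ makes precise something the paper leaves implicit in its phrase ``constants $1+O(\varepsilon_t)$''. Comparing the gradient terms requires a uniform positive lower bound on $\rho_t$, which the paper's argument needs just as yours does.
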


\begin{proof}
Fix $b$. Let $m:=\int_{F_{b,t}}u\,d\vartheta_{b,t}$ and $m_\rho:=\int_{F_{b,t}}u\,\rho_t\,d\vartheta_{b,t}$.
Since $\int \rho_t\,d\vartheta_{b,t}=1$, we have
\[
m-m_\rho=\int_{F_{b,t}}(u-m)(1-\rho_t)\,d\vartheta_{b,t},
\]
hence by Cauchy--Schwarz and $\|\rho_t-1\|_\infty\le C\varepsilon_t$,
\[
|m-m_\rho|\le \|\rho_t-1\|_{L^2(F_{b,t},d\vartheta)}\,\|u-m\|_{L^2(F_{b,t},d\vartheta)}
\le C\varepsilon_t \|u-m\|_{L^2(F_{b,t},d\vartheta)}.
\]
Therefore,
\[
\|u-m_\rho\|_{L^2(d\vartheta)}
\le \|u-m\|_{L^2(d\vartheta)}+|m-m_\rho|
\le (1+C\varepsilon_t)\|u-m\|_{L^2(d\vartheta)}.
\]
By the standard Poincar\'e inequality on the flat torus and the gap \eqref{eq:gap},
\[
\|u-m\|_{L^2(d\vartheta)}^2 \le \lambda_1(F_{b,t},g_{F,t})^{-1}\,\|\nabla_F u\|_{L^2(d\vartheta)}^2
\le C\, s(t)^2\,\|\nabla_F u\|_{L^2(d\vartheta)}^2.
\]
Finally, since $\rho_t$ is uniformly comparable to $1$ by \eqref{eq:rhoA-bounds},
$\|\cdot\|_{L^2(\rho_t d\vartheta)}^2 \simeq \|\cdot\|_{L^2(d\vartheta)}^2$ with constants $1+O(\varepsilon_t)$,
and similarly for the gradient term; absorbing these factors yields \eqref{eq:fiberwise-poincare}.
\end{proof}

Integrating fiberwise yields the global vertical Poincar\'e inequality for the orthogonal complement of
the fiber-constant sector.

\begin{proposition}[Global vertical Poincar\'e inequality]\label{prop:vertical-poincare-global}
Under the assumptions of Lemma~\ref{lem:fiberwise-poincare}, there exists $C>0$ independent of $t$ such that
for all $u\in H^1(X_{t,0})$,
\begin{equation}\label{eq:vertical-poincare-global}
\|(\Pi_t^{\mathrm{fc}})^\perp u\|_{L^2_t}^2
\ \le\ C\, s(t)^2 \int_{X_{t,0}} |\nabla_F u|_{g_{F,t}}^2\,d\mu_t.
\end{equation}
Equivalently,
\begin{equation}\label{eq:vertical-coercivity}
\int_{X_{t,0}} |\nabla_F u|_{g_{F,t}}^2\,d\mu_t
\ \ge\ c\, s(t)^{-2}\,\|(\Pi_t^{\mathrm{fc}})^\perp u\|_{L^2_t}^2.
\end{equation}
\end{proposition}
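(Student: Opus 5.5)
The plan is to integrate the fiberwise inequality of Lemma~\ref{lem:fiberwise-poincare} against the base using the normalized disintegration \eqref{eq:disintegration}. The first step is to identify $(\Pi_t^{\mathrm{fc}})^\perp u$ pointwise. For $x\in F_{b,t}$, Definition~\ref{def:ItPt-correct} gives
\[
(\Pi_t^{\mathrm{fc}}u)(x)=(I_tP_tu)(x)=A_t(b)^{-1/2}A_t(b)^{1/2}\int_{F_{b,t}}u\,\rho_t\,d\vartheta_{b,t}=m_\rho(b).
\]
Here $m_\rho(b)$ is exactly the $\rho_t$-weighted fiber mean appearing in \eqref{eq:fiberwise-poincare}. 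Hence $(\Pi_t^{\mathrm{fc}})^\perp u = u - m_\rho\circ\Pi_t$ on each fiber, and the normalization factors $A_t^{\pm1/2}$ cancel.

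Next I will apply \eqref{eq:disintegration} to $f=|u-m_\rho\circ\Pi_t|^2$. This gives
\[
\|(\Pi_t^{\mathrm{fc}})^\perp u\|_{L^2_t}^2=\int_{B_0}A_t(b)\int_{F_{b,t}}|u-m_\rho(b)|^2\rho_t\,d\vartheta_{b,t}\,d\mu_B(b).
\]
For almost every $b$, the inner integral is bounded by \eqref{eq:fiberwise-poincare} by $Cs(t)^2\int_{F_{b,t}}|\nabla_F u|^2\rho_t\,d\vartheta_{b,t}$, with $C$ uniform in $b$ and $t$. Multiplying by $A_t(b)>0$, integrating over $B_0$, and reading \eqref{eq:disintegration} backwards with $f=|\nabla_F u|^2_{g_{F,t}}$ yields \eqref{eq:vertical-poincare-global}. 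The equivalent form \eqref{eq:vertical-coercivity} follows by rearranging, with $c=C^{-1}$.

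The one point requiring care is the fiberwise restriction step. To apply Lemma~\ref{lem:fiberwise-poincare} I need $u|_{F_{b,t}}\in H^1(F_{b,t})$ for a.e.\ $b$, and I need $\nabla_F u$ to be the derivative of this restriction. I would handle this by density. For $u\in C^\infty(\overline{X_{t,0}})\cap H^1$ the restriction is smooth and the inequality holds directly. Both sides of \eqref{eq:vertical-poincare-global} are continuous in the $H^1(X_{t,0})$ norm: $(\Pi_t^{\mathrm{fc}})^\perp$ is an $L^2$ contraction, and $|\nabla_F u|\le C|\nabla u|_{g(t)}$ by the $C^1$ control \eqref{eq:sf-C1}. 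So the inequality extends to all $u\in H^1(X_{t,0})$, and the same argument covers $H^1_0$.

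Two further checks are routine. The fiberwise identification $X_{t,0}\simeq B_0\times\mathbb T^2$ is only local, but the disintegration is global, so no partition of unity is needed. The $\rho_t$-weights are handled once, inside the lemma, and never need to be re-compared afterwards.
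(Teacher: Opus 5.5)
Your proposal is correct and follows the paper's proof. Like the paper, you identify $\Pi_t^{\mathrm{fc}}u$ on each fiber with the $\rho_t$-weighted mean $m_\rho(b)$, apply Lemma~\ref{lem:fiberwise-poincare} fiberwise, and integrate against the disintegration \eqref{eq:disintegration}; your density argument justifying the restriction of $H^1$ functions to fibers is extra care that the paper leaves implicit.
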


\begin{proof}
By Lemma~\ref{lem:adjoint-lock}, $(\Pi_t^{\mathrm{fc}}u)|_{F_{b,t}}$ equals the fiberwise constant
$m_\rho(b):=\int_{F_{b,t}}u\,\rho_t\,d\vartheta_{b,t}$ (pulled back to the fiber).
Thus $((\Pi_t^{\mathrm{fc}})^\perp u)|_{F_{b,t}}=u-m_\rho(b)$. Apply Lemma~\ref{lem:fiberwise-poincare} fiberwise and integrate over $b$
using the disintegration \eqref{eq:disintegration}. This yields \eqref{eq:vertical-poincare-global}.
The coercive form \eqref{eq:vertical-coercivity} is the same inequality rewritten.
\end{proof}

\subsection{Energy comparison for lift and average}\label{subsec:energy-comparison}

To reach Mosco convergence we need two quantitative compatibilities:
(i) the lift $I_t$ is $H^1$-compatible (fiber-constants have the correct horizontal energy),
(ii) the average $P_t$ does not increase the base energy beyond the total energy.

\begin{lemma}[Energy of fiber-constant lifts]\label{lem:lift-energy}
There exists $C>0$ independent of $t$ such that for all $v\in H^1_0(B_0)$,
\begin{equation}\label{eq:lift-energy}
E_t[I_tv]
=
E_B[v]\ +\ O(\varepsilon_t)\,\|v\|_{H^1(B_0)}^2.
\end{equation}
In particular, $\lim_{t\downarrow 0}E_t[I_tv]=E_B[v]$ for each fixed $v\in H^1_0(B_0)$.
\end{lemma}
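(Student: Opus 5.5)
The plan is to reduce the statement to Lemma~\ref{lem:H1-lift}, after checking that the lift preserves the Dirichlet boundary condition. First, for $v\in H^1_0(B_0)$ I will show that $I_tv=A_t^{-1/2}\,v\circ\Pi_t$ lies in $H^1_0(X_{t,0})$. Take $v_n\in C_c^\infty(B_0)$ with $v_n\to v$ in $H^1$. Then each $I_tv_n$ is Lipschitz and vanishes near $\Pi_t^{-1}(\partial B_0)$, since $A_t$ is bounded above and below on $\overline{B_0}$ and $\nabla\log A_t$ is bounded by \eqref{eq:rhoA-bounds}. Lemma~\ref{lem:H1-lift} gives a bound on $I_t$ from $H^1(B_0)$ to $H^1(X_{t,0})$ that is uniform in $t$. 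Combined with the $L^2$ isometry \eqref{eq:It-L2}, this shows $I_tv_n\to I_tv$ in $H^1(X_{t,0})$, so $I_tv\in H^1_0(X_{t,0})$ and $E_t[I_tv]$ is defined.

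Next I will make the energy expansion explicit. I split $|\nabla u|^2_{g(t)}$ for $u=I_tv$ by comparing $g(t)$ with $g_t^\Pi$. Since $\|E_t\|_{C^0}\le\varepsilon_t$, the inverse metrics satisfy $g(t)^{-1}=(g_t^\Pi)^{-1}+O(\varepsilon_t)$ pointwise, so
\[
|du|^2_{g(t)}=|du|^2_{g_t^\Pi}\,(1+O(\varepsilon_t)).
\]
Because $u$ is fiber-constant, $du$ is horizontal, and therefore $|du|_{g_t^\Pi}=|d_Bu|_{g_B}$. Integrating this with the disintegration \eqref{eq:disintegration} and the normalization \eqref{eq:rho-normalization} gives the identity already used in the proof of Lemma~\ref{lem:H1-lift}: the base energy $E_B[v]$, plus a cross term, plus a term quadratic in $v\,\nabla\log A_t$. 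By Cauchy--Schwarz and \eqref{eq:rhoA-bounds}, the cross term is at most $C\varepsilon_t\|v\|_{L^2}\|\nabla v\|_{L^2}$ and the quadratic term is at most $C\varepsilon_t^2\|v\|_{L^2}^2$. The metric-comparison factor contributes $O(\varepsilon_t)E_B[v]$ plus the same lower-order terms. Altogether this gives \eqref{eq:lift-energy} with a constant depending only on the constant in \eqref{eq:rhoA-bounds}.

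The pointwise limit is then immediate: for fixed $v$ the error is $O(\varepsilon_t)\|v\|_{H^1}^2\to0$. The only delicate step is justifying the comparison $|du|_{g(t)}$ versus $|du|_{g_t^\Pi}$ in the chosen local semi-flat trivializations. Here I need $E_t$ to be small in the $g_t^\Pi$-norm uniformly over the collapsing fibers, which is exactly what \eqref{eq:sf-C1} provides. So only $C^0$ control of $E_t$ is needed for the metric comparison; the $C^1$ control is used only through the bound on $\nabla\log A_t$. Finitely many trivializations cover $\overline{B_0}$, so the constants are uniform.
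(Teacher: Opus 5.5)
Your proposal is correct and follows essentially the same route as the paper. The paper also reduces the claim to the horizontal $H^1$-compatibility of Lemma~\ref{lem:H1-lift}: the vertical gradient vanishes, $\nabla\log A_t$ contributes an $O(\varepsilon_t)$ correction, and replacing $g(t)$ by $g_t^\Pi$ costs an $O(\varepsilon_t)$ relative error. Your extra checks are sound and are left implicit in the paper: you verify $I_tv\in H^1_0(X_{t,0})$ by approximating with $C_c^\infty(B_0)$ functions, and you note that the metric comparison needs only $C^0$ control of $E_t$.
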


\begin{proof}
This is the $H^1$-compatibility established in Section~\ref{sec:identifications}: since $I_tv=A_t^{-1/2}\,v\circ \Pi_t$,
the vertical gradient vanishes identically and the horizontal gradient differs from $\nabla v$ only through
$\nabla\log A_t$, which is $O(\varepsilon_t)$ by \eqref{eq:rhoA-bounds}.
The $C^1$ semi-flat control ensures that the true metric $g(t)$ is uniformly close to the product metric on $X_{t,0}$,
so replacing $g(t)$ by $g_B\oplus g_{F,t}$ introduces only an $O(\varepsilon_t)$ relative error.
Collecting terms yields \eqref{eq:lift-energy}.
\end{proof}

\begin{lemma}[Energy contraction under averaging]\label{lem:avg-energy}
There exists $C>0$ independent of $t$ such that for all $u\in H^1_0(X_{t,0})$,
\begin{equation}\label{eq:avg-energy}
E_B[P_tu]\ \le\ (1+C\varepsilon_t)\,E_t[u]\ +\ C\varepsilon_t\,\|u\|_{L^2_t}^2.
\end{equation}
In particular, if $\sup_t\|u_t\|_{L^2_t}<\infty$ and $\sup_t E_t[u_t]<\infty$, then $\{P_tu_t\}$ is bounded in $H^1_0(B_0)$.
\end{lemma}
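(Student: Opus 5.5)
We need to prove Lemma~\ref{lem:avg-energy}: $E_B[P_tu]\le(1+C\varepsilon_t)E_t[u]+C\varepsilon_t\|u\|^2_{L^2_t}$. The approach is a direct computation in local semi-flat coordinates for the product reference metric $g_t^\Pi=g_B\oplus g_{F,t}$, followed by a fiberwise Cauchy--Schwarz (Jensen) inequality. We first reduce to the product metric. By \eqref{eq:sf-C1}, $|\nabla u|^2_{g(t)}\ge(1-C\varepsilon_t)|\nabla u|^2_{g_t^\Pi}$ pointwise, so it suffices to bound $E_B[P_tu]$ by $(1+C\varepsilon_t)\int_{X_{t,0}}|\nabla_B u|_{g_B}^2\,d\mu_t+C\varepsilon_t\|u\|^2_{L^2_t}$, where $\nabla_B$ is the horizontal gradient in the splitting. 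The vertical energy is simply discarded; it is nonnegative. By density we may take $u$ smooth with compact support in $X_{t,0}$; then $P_tu\in C_c^\infty(B_0)$, so the Dirichlet condition is preserved.

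Next we differentiate $P_tu$. In a local trivialization $B_0\supset V\times\mathbb T^2$, use flat fiber coordinates in which $d\vartheta_{b,t}$ is the fixed Haar measure on $\mathbb T^2$, independent of $b$ up to the trivialization. Differentiating under the integral gives
\[
\nabla(P_tu)(b)=A_t^{1/2}\int_{F_{b,t}}\nabla_Bu\,\rho_t\,d\vartheta_{b,t}+A_t^{1/2}\int_{F_{b,t}}u\,\nabla_B\rho_t\,d\vartheta_{b,t}+\tfrac12\nabla\log A_t\,(P_tu)(b).
\]
Call these terms $T_1,T_2,T_3$. For $T_1$, Jensen with respect to the probability measure $\rho_t\,d\vartheta_{b,t}$, exactly as in Lemma~\ref{lem:L2-nearisometry}, gives $|T_1|^2\le A_t\int|\nabla_Bu|^2\rho_t\,d\vartheta_{b,t}$. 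Integrating in $b$ with \eqref{eq:disintegration} yields $\int_{X_{t,0}}|\nabla_Bu|^2d\mu_t$. For $T_2$, use $|\nabla_B\rho_t|\le C\varepsilon_t$ and $\rho_t\ge 1-C\varepsilon_t$ from \eqref{eq:rhoA-bounds}. This gives $|T_2|^2\le C\varepsilon_t^2A_t\int|u|^2\rho_t\,d\vartheta$, whose integral is at most $C\varepsilon_t^2\|u\|^2_{L^2_t}$. For $T_3$, \eqref{eq:rhoA-bounds} and \eqref{eq:Pt-L2} give $\|T_3\|^2_{L^2_B}\le C\varepsilon_t^2\|u\|^2_{L^2_t}$.

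We then combine the terms using $|a+b|^2\le(1+\varepsilon_t)|a|^2+(1+\varepsilon_t^{-1})|b|^2$. This yields $E_B[P_tu]\le(1+\varepsilon_t)\|\nabla_Bu\|^2+C\varepsilon_t\|u\|^2$, since $\varepsilon_t^{-1}\cdot\varepsilon_t^2=\varepsilon_t$. Together with the metric comparison, this gives \eqref{eq:avg-energy}. The boundedness statement then follows from \eqref{eq:Pt-L2} together with \eqref{eq:avg-energy}.

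The main obstacle is making sense of horizontal differentiation under the fiber integral. The measures $d\vartheta_{b,t}$ live on varying fibers, so one needs a trivialization in which the horizontal lift of base vector fields preserves the Haar measure. If the horizontal distribution of the splitting is not flat, the lifted vector fields act on $\mathbb T^2$ by fiber translations or affine maps. These preserve Haar measure, but one must verify that this holds in a way compatible with \eqref{eq:rhoA-bounds}. I would first try coordinates in which the splitting is exactly the product $V\times\mathbb T^2$. I would then attribute any deviation of the true horizontal distribution to $E_t$, which costs only another $O(\varepsilon_t)$ factor through the $C^1$ control.
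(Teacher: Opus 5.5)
Your proposal is correct and follows essentially the same route as the paper's sketch. In both, one differentiates $P_tu$ under the fiber integral in local product (semi-flat) coordinates, controls the leading term by Jensen with respect to the probability measure $\rho_t\,d\vartheta_{b,t}$, bounds the terms coming from $\nabla\log A_t$ and $\nabla_B\rho_t$ by $C\varepsilon_t$, and passes from $|\nabla_B u|^2$ to $|\nabla u|^2_{g(t)}$ via the $C^1$ semi-flat control; your weighted Young inequality for the cross terms and your density argument for the Dirichlet condition are slightly more explicit versions of steps the paper states briefly.
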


\begin{proof}
We sketch the argument, which is standard once one has the quantitative disintegration and the $C^1$ semi-flat control.

Write $v:=P_tu$. In local semi-flat coordinates on $X_{t,0}\simeq B_0\times \mathbb{T}^2$,
\[
v(b)=A_t(b)^{1/2}\int_{\mathbb{T}^2} u(b,\theta)\,\rho_t(b,\theta)\,d\vartheta.
\]
Differentiate under the integral sign along a base vector field and use the product splitting to interpret
$\nabla_B u$ as the horizontal derivative.
The leading term is $A_t^{1/2}\int \nabla_B u\ \rho_t\,d\vartheta$, which is controlled by Jensen:
\[
\Big| \int \nabla_B u\ \rho_t\,d\vartheta \Big|^2
\le \int |\nabla_B u|^2\,\rho_t\,d\vartheta.
\]
All remaining terms come from differentiating $A_t^{1/2}$ and $\rho_t$ and are therefore bounded by
$\|\nabla\log A_t\|_\infty$ and $\|\nabla_B\rho_t\|_\infty$, hence by $C\varepsilon_t$ via \eqref{eq:rhoA-bounds}.
These error terms are controlled by Cauchy--Schwarz, yielding a contribution bounded by
$C\varepsilon_t\big(\|\nabla_B u\|_{L^2_t}^2+\|u\|_{L^2_t}^2\big)$.
Finally, since $|\nabla_B u|^2\le |\nabla u|_{g(t)}^2$ up to $O(\varepsilon_t)$ errors by the $C^1$ semi-flat control,
we obtain \eqref{eq:avg-energy}.
The Dirichlet boundary condition implies $v|_{\partial B_0}=0$, hence $v\in H^1_0(B_0)$.
\end{proof}

\subsection{Mosco convergence of Dirichlet forms on identified spaces}\label{subsec:mosco}

We now prove the second link in the chain:
\[
\text{form comparison + $H^1$-compatibility of }(I_t,P_t)\Longrightarrow
\text{Mosco convergence}.
\]

\begin{definition}[Mosco convergence via $(P_t,I_t)$]\label{def:mosco}
We say that $E_t$ Mosco-converges to $E_B$ along $(P_t,I_t)$ in the sense of Mosco; see \cite{Mosco1969,Attouch,MoscoRefs}
if:
\begin{itemize}
\item[(M1) Liminf:]
whenever $u_t\in H^1_0(X_{t,0})$ satisfy $\sup_t\|u_t\|_{L^2_t}<\infty$ and $P_tu_t\to v$ in $L^2_B$,
one has $v\in H^1_0(B_0)$ and
\[
\liminf_{t\downarrow 0} E_t[u_t]\ \ge\ E_B[v];
\]
\item[(M2) Recovery:]
for every $v\in H^1_0(B_0)$ there exist $u_t\in H^1_0(X_{t,0})$ such that $P_tu_t=v$ and
\[
\limsup_{t\downarrow 0} E_t[u_t]\ \le\ E_B[v].
\]
\end{itemize}
\end{definition}

\begin{theorem}[Mosco convergence]\label{thm:mosco}
Assume the semi-flat $C^1$ control on $X_{t,0}$ (hence \eqref{eq:rhoA-bounds}) and the uniform vertical gap
Assumption~\ref{ass:vertical-gap}. Then the Dirichlet forms $E_t$ Mosco-converge to $E_B$ on $B_0$
in the sense of Definition~\ref{def:mosco} via the identification pair $(P_t,I_t)$.
\end{theorem}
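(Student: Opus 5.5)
The two Mosco conditions of Definition~\ref{def:mosco} will be verified separately. The recovery condition (M2) is essentially immediate from the lift. The liminf condition (M1) requires the vertical Poincar\'e inequality together with a lower-semicontinuity argument on the base.

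\emph{Recovery (M2).} Given $v\in H^1_0(B_0)$, set $u_t:=I_tv$. Lemma~\ref{lem:adjoint-lock} gives $P_tu_t=P_tI_tv=v$ exactly. Since $A_t^{-1/2}$ is bounded and $v\circ\Pi_t$ vanishes on $\Pi_t^{-1}(\partial B_0)$, we have $u_t\in H^1_0(X_{t,0})$. Lemma~\ref{lem:lift-energy} then yields $E_t[u_t]=E_B[v]+O(\varepsilon_t)\|v\|_{H^1}^2\to E_B[v]$. This gives the limsup inequality, in fact with equality in the limit.

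\emph{Liminf (M1).} Let $u_t\in H^1_0(X_{t,0})$ with $\sup_t\|u_t\|_{L^2_t}<\infty$ and $P_tu_t\to v$ in $L^2_B$. We may assume $\liminf_t E_t[u_t]<\infty$. Passing to a subsequence realizing the liminf, we may also assume $\sup_t E_t[u_t]<\infty$.

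Lemma~\ref{lem:avg-energy} gives
\[
E_B[P_tu_t]\le(1+C\varepsilon_t)E_t[u_t]+C\varepsilon_t\|u_t\|_{L^2_t}^2 .
\]
Hence $\{P_tu_t\}$ is bounded in $H^1_0(B_0)$. A further subsequence converges weakly in $H^1_0(B_0)$. Its weak limit must coincide with the strong $L^2$ limit $v$, so $v\in H^1_0(B_0)$, because $H^1_0$ is weakly closed. Weak lower semicontinuity of the convex functional $E_B$ then gives
\[
E_B[v]\le\liminf_t E_B[P_tu_t]\le \liminf_t\big[(1+C\varepsilon_t)E_t[u_t]+C\varepsilon_t\sup_s\|u_s\|^2\big]=\liminf_t E_t[u_t],
\]
using $\varepsilon_t\to0$ and the uniform $L^2$ bound. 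Since the original liminf was realized along the chosen subsequence, (M1) follows.

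Proposition~\ref{prop:vertical-poincare-global} is not strictly needed for these inequalities. I would still record that it forces $\|(\Pi^{\mathrm{fc}}_t)^\perp u_t\|_{L^2_t}\le Cs(t)E_t[u_t]^{1/2}\to0$. Consequently, bounded-energy sequences are asymptotically fiber-constant, $u_t-I_tP_tu_t\to0$. This shows that the $P_t$-based convergence notion is the right one, with no oscillatory energy lost, and it will be needed when converting to semigroup convergence.

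\emph{Main obstacle.} The delicate step is the energy contraction (Lemma~\ref{lem:avg-energy}), on which (M1) rests. Two points need care. First, the horizontal derivative of $u$ in the product splitting must be dominated by $|\nabla u|_{g(t)}$ up to a factor $1+O(\varepsilon_t)$; this uses the $C^1$ control of $E_t$, including that the horizontal/vertical splitting is almost $g(t)$-orthogonal. Second, the derivatives falling on $A_t^{1/2}$ and $\rho_t$ must be absorbed via \eqref{eq:rhoA-bounds}. I would take the lemma as given. I would check only that its error terms have the form $C\varepsilon_t(E_t[u]+\|u\|^2)$, so that they disappear in the limit under the uniform bounds used above.
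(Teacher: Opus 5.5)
Your proposal is correct and follows the paper's proof essentially step for step: (M2) uses the exact recovery sequence $I_tv$ together with Lemma~\ref{lem:lift-energy}, and (M1) uses the energy contraction of Lemma~\ref{lem:avg-energy} plus weak compactness and weak lower semicontinuity of $E_B$ on $H^1_0(B_0)$. Your subsequence handling (first pass to a subsequence realizing the $\liminf$, then extract a weakly convergent one) is slightly more careful than the paper's, and you are right that the vertical Poincar\'e inequality is not needed here; the paper also uses it only later, in Corollary~\ref{cor:vertical-vanish}.
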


\begin{proof}
\emph{Step 1: (M1) Liminf.}
Let $u_t\in H^1_0(X_{t,0})$ with $\sup_t\|u_t\|_{L^2_t}<\infty$ and $P_tu_t\to v$ in $L^2_B$.
If $\liminf_t E_t[u_t]=\infty$ there is nothing to prove, so assume $\sup_t E_t[u_t]<\infty$ along a subsequence.

By Lemma~\ref{lem:avg-energy}, the sequence $v_t:=P_tu_t$ is bounded in $H^1_0(B_0)$, hence (up to a subsequence)
$v_t\rightharpoonup v$ weakly in $H^1_0(B_0)$ and $v_t\to v$ strongly in $L^2(B_0)$.
By weak lower semicontinuity,
\begin{equation}\label{eq:lsc}
E_B[v]\ \le\ \liminf_{t\downarrow 0} E_B[v_t].
\end{equation}
On the other hand, \eqref{eq:avg-energy} gives
\[
E_B[v_t]\ \le\ (1+C\varepsilon_t)E_t[u_t] + C\varepsilon_t\|u_t\|_{L^2_t}^2.
\]
Taking $\liminf$ and using $\varepsilon_t\to 0$ and $\sup_t\|u_t\|_{L^2_t}<\infty$ yields
\begin{equation}\label{eq:EB-vt-le-Et}
\liminf_{t\downarrow 0} E_B[v_t]\ \le\ \liminf_{t\downarrow 0} E_t[u_t].
\end{equation}
Combining \eqref{eq:lsc} and \eqref{eq:EB-vt-le-Et} proves (M1).

\smallskip
\emph{Step 2: (M2) Recovery.}
Fix $v\in H^1_0(B_0)$ and set $u_t:=I_tv$.
Then $u_t\in H^1_0(X_{t,0})$ and $P_tu_t=P_tI_tv=v$ by Lemma~\ref{lem:adjoint-lock}.
By Lemma~\ref{lem:lift-energy},
\[
\limsup_{t\downarrow 0} E_t[u_t]
=
\limsup_{t\downarrow 0} E_t[I_tv]
\le E_B[v],
\]
which proves (M2).
\end{proof}

The vertical gap also yields an explicit suppression of the fiber-oscillatory component along bounded-energy sequences,
which will be used in the semigroup/kernels step.

\begin{corollary}[Vanishing of fiber-oscillatory mass at bounded energy]\label{cor:vertical-vanish}
Assume the hypotheses of Theorem~\ref{thm:mosco}. If $u_t\in H^1_0(X_{t,0})$ satisfy
$\sup_t E_t[u_t]<\infty$, then
\[
\|(\Pi_t^{\mathrm{fc}})^\perp u_t\|_{L^2_t}\ \longrightarrow\ 0
\qquad (t\downarrow 0).
\]
\end{corollary}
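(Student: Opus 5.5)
The plan is to combine the global vertical Poincar\'e inequality, Proposition~\ref{prop:vertical-poincare-global}, with a pointwise comparison between the vertical energy and the full Dirichlet energy $E_t$. Since $s(t)\downarrow 0$, the corollary then follows almost immediately.

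\emph{Step 1 (energy comparison).} We work in local semi-flat coordinates. There the true metric satisfies $g(t)=g_t^\Pi+E_t$ with $\|E_t\|_{C^1}\le\varepsilon_t$, as in \eqref{eq:sf-C1}. Hence the dual metrics on $T^*X_{t,0}$ are comparable with constants $1+O(\varepsilon_t)$, and the volume forms are comparable with the same constants. For the product metric $g_t^\Pi=g_B\oplus g_{F,t}$ the cotangent splitting is orthogonal, so pointwise
\[
|\nabla_F u|_{g_{F,t}}^2\le |du|_{g_t^\Pi}^2\le (1+C\varepsilon_t)\,|\nabla u|_{g(t)}^2 .
\]
Integrating against $d\mu_t$ gives
\[
\int_{X_{t,0}}|\nabla_F u|^2_{g_{F,t}}\,d\mu_t\le (1+C\varepsilon_t)\,E_t[u]
\]
for every $u\in H^1_0(X_{t,0})$. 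The one point to check is that the vertical gradient in Proposition~\ref{prop:vertical-poincare-global} is defined relative to the product splitting, which is the convention fixed just before Lemma~\ref{lem:fiberwise-poincare}. With that convention the inequality is just the orthogonal decomposition for $g_t^\Pi$ plus a metric perturbation, and I expect no real obstacle.

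\emph{Step 2 (conclusion).} Apply \eqref{eq:vertical-poincare-global} to $u_t\in H^1_0(X_{t,0})\subset H^1(X_{t,0})$ and combine it with Step 1:
\[
\|(\Pi_t^{\mathrm{fc}})^\perp u_t\|_{L^2_t}^2\le C\,s(t)^2(1+C\varepsilon_t)\,E_t[u_t]\le C'\,s(t)^2\sup_{t}E_t[u_t].
\]
The right-hand side tends to zero because $s(t)\to0$ and $\varepsilon_t\to 0$. This even yields the quantitative rate
\[
\|(\Pi_t^{\mathrm{fc}})^\perp u_t\|_{L^2_t}=O\big(s(t)\big),
\]
which will be useful later for the explicit rates in Theorem~\ref{thm:main}. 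No $L^2$ bound on $u_t$ is needed, and neither is the Mosco convergence itself; only the hypotheses underlying Theorem~\ref{thm:mosco} are used.
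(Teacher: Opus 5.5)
Your proof is correct and follows the paper's argument. Like the paper, you combine the global vertical Poincar\'e inequality (Proposition~\ref{prop:vertical-poincare-global}) with the comparison $\int_{X_{t,0}}|\nabla_F u|^2\,d\mu_t\le(1+C\varepsilon_t)\,E_t[u]$, which comes from the $C^1$ semi-flat control, to get $\|(\Pi_t^{\mathrm{fc}})^\perp u_t\|_{L^2_t}^2\le C\,s(t)^2\sup_t E_t[u_t]\to 0$; you only make the dual-metric comparison explicit, and the $O(s(t))$ rate you note is equally implicit in the paper's one-line proof.
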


\begin{proof}
By Proposition~\ref{prop:vertical-poincare-global} and $|\nabla_F u|^2\le |\nabla u|_{g(t)}^2$ up to $O(\varepsilon_t)$,
\[
\|(\Pi_t^{\mathrm{fc}})^\perp u_t\|_{L^2_t}^2
\le C s(t)^2 \int_{X_{t,0}}|\nabla_F u_t|^2\,d\mu_t
\le C s(t)^2\,E_t[u_t].
\]
Since $s(t)\downarrow 0$ and $\sup_tE_t[u_t]<\infty$, the right-hand side tends to $0$.
\end{proof}

\subsection{Section summary}\label{subsec:sec3-summary}
We have proved: (i) the vertical gap yields a global vertical Poincar\'e inequality for $(\Pi_t^{\mathrm{fc}})^\perp$,
hence suppression of nonconstant fiber modes at bounded energy; (ii) the lift and average maps
$(I_t,P_t)$ are $H^1$-compatible in the precise form needed to establish Mosco convergence of the Dirichlet forms
$E_t\to E_B$ on $B_0$.

In the next section we apply the standard implication
\[
\text{Mosco convergence}\ \Longrightarrow\ \text{strong resolvent/semigroup convergence}
\]
for the associated selfadjoint operators, and then convert semigroup convergence into the
\emph{interior bilinear heat-kernel pairing limit} via kernel representation.
\section{From Mosco convergence to compressed semigroups and the interior bilinear kernel limit}\label{sec:semigroup-kernel}

In this section we close the main interior chain:
\[
\begin{aligned}
\text{Mosco convergence of } E_t
&\Longrightarrow \text{strong resolvent/semigroup convergence} \\
&\Longrightarrow \text{bilinear heat-kernel pairing limit} \\
&\qquad \text{on } B_0 \Subset B_{\mathrm{reg}}.
\end{aligned}
\]
Throughout, $B_0\Subset B_{\mathrm{reg}}$ is fixed and $X_{t,0}:=\Pi_t^{-1}(B_0)$.
We work under Dirichlet boundary conditions; the Neumann and nonnegatively weighted Robin cases
are treated later and do not affect the arguments of the present section.

\subsection{Generators and the compressed operators}\label{subsec:generators}

Let $H_t$ and $H_B$ denote the nonnegative selfadjoint operators associated with the closed forms
$E_t$ and $E_B$ from \eqref{eq:Et}--\eqref{eq:EB}, i.e.\ the Friedrichs realizations of $-\Delta_{g(t)}$ on $X_{t,0}$
and $-\Delta_{g_B}$ on $B_0$, respectively.

Write the heat semigroups
\[
T_t(\tau):=e^{-\tau H_t}\quad\text{on }L^2_t,\qquad
T_B(\tau):=e^{-\tau H_B}\quad\text{on }L^2_B,\qquad \tau>0.
\]
Both are selfadjoint, positivity preserving, and $L^2$-contractive.

We define the \emph{compressed (fiber-constant) heat operators} on the base Hilbert space $L^2_B$ by
\begin{equation}\label{eq:compressed-semigroup}
S_t(\tau):=P_t T_t(\tau) I_t \;=\; I_t^*\,T_t(\tau)\,I_t : L^2_B\to L^2_B.
\end{equation}
By Lemma~\ref{lem:adjoint-lock} and contractivity of $T_t(\tau)$, each $S_t(\tau)$ is a selfadjoint contraction on $L^2_B$:
\begin{equation}\label{eq:St-contraction}
\|S_t(\tau)\|_{L^2_B\to L^2_B}\le 1,\qquad S_t(\tau)=S_t(\tau)^*.
\end{equation}

\subsection{Mosco convergence implies strong resolvent convergence}\label{subsec:resolvent}

We first record the standard implication ``Mosco $\Rightarrow$ strong resolvent convergence'' in our identified setting.
For completeness we include a short variational proof, adapted to the connecting maps $(P_t,I_t)$.

\begin{theorem}[Strong resolvent convergence via Mosco]\label{thm:resolvent}
Assume $E_t \overset{M}{\longrightarrow} E_B$ via $(P_t,I_t)$ in the sense of Theorem~\ref{thm:mosco}.
Then for every $\alpha>0$ and every $f\in L^2_B$,
\begin{equation}\label{eq:strong-resolvent}
P_t(\alpha+H_t)^{-1}I_t f \longrightarrow (\alpha+H_B)^{-1}f\qquad\text{strongly in }L^2_B.
\end{equation}
Equivalently,
\[
P_t(\alpha+H_t)^{-1}I_t \;\xrightarrow[t\downarrow 0]{\ \mathrm{s}\ }\; (\alpha+H_B)^{-1}
\quad\text{in }\mathcal{B}(L^2_B).
\]
\end{theorem}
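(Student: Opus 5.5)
We use the variational characterization of resolvents. Fix $\alpha>0$ and $f\in L^2_B$, and set $u_t:=(\alpha+H_t)^{-1}I_tf\in H^1_0(X_{t,0})$. Then $u_t$ is the unique minimizer of
\[
J_t(u):=E_t[u]+\alpha\|u\|_{L^2_t}^2-2\langle I_tf,u\rangle_{L^2_t},
\]
and $w:=(\alpha+H_B)^{-1}f$ minimizes $J_B(v):=E_B[v]+\alpha\|v\|_{L^2_B}^2-2\langle f,v\rangle_{L^2_B}$ over $H^1_0(B_0)$. By adjointness (Lemma~\ref{lem:adjoint-lock}), $\langle I_tf,u\rangle_{L^2_t}=\langle f,P_tu\rangle_{L^2_B}$, so the linear term of $J_t$ depends on $u$ only through $P_tu$. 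Testing the Euler--Lagrange equation with $u_t$ gives
\[
E_t[u_t]+\alpha\|u_t\|^2\le\|f\|\,\|u_t\|,
\]
using $\|I_tf\|=\|f\|$ (Lemma~\ref{lem:L2-nearisometry}). Hence $\|u_t\|_{L^2_t}\le\|f\|/\alpha$ and $\sup_tE_t[u_t]<\infty$.

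\emph{Compactness.} By Lemma~\ref{lem:avg-energy}, $v_t:=P_tu_t$ is bounded in $H^1_0(B_0)$. Since $B_0$ is precompact with smooth boundary, Rellich gives a subsequence with $v_t\to v$ strongly in $L^2_B$. By Corollary~\ref{cor:vertical-vanish}, $\|u_t-I_tv_t\|_{L^2_t}=\|(\Pi_t^{\mathrm{fc}})^\perp u_t\|\to0$. Since $I_t$ is an isometry, $\|u_t\|^2=\|v_t\|^2+o(1)\to\|v\|^2$.

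\emph{Identification of the limit.} By (M1), $v\in H^1_0(B_0)$ and $\liminf E_t[u_t]\ge E_B[v]$, so
\[
\liminf_t J_t(u_t)\ \ge\ E_B[v]+\alpha\|v\|^2-2\langle f,v\rangle=J_B(v).
\]
For any $\phi\in H^1_0(B_0)$, the recovery sequence $I_t\phi$ of (M2) satisfies
\[
\limsup_t J_t(I_t\phi)\le E_B[\phi]+\alpha\|\phi\|^2-2\langle f,\phi\rangle=J_B(\phi),
\]
using $P_tI_t\phi=\phi$ and $\|I_t\phi\|=\|\phi\|$. Minimality of $u_t$ gives $J_t(u_t)\le J_t(I_t\phi)$, hence $J_B(v)\le J_B(\phi)$ for all $\phi$. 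Therefore $v=w$ by uniqueness of the minimizer of the strictly convex functional $J_B$.

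Since the limit is independent of the subsequence, the whole family satisfies $P_t(\alpha+H_t)^{-1}I_tf=v_t\to w$ in $L^2_B$. This is exactly \eqref{eq:strong-resolvent}.

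The main obstacle is the $\liminf$ step. Mosco's (M1) as stated in Definition~\ref{def:mosco} only controls $E_t$ given convergence of $P_tu_t$, whereas $J_t$ also contains $\alpha\|u_t\|_{L^2_t}^2$, which involves the full $u_t$ and not just $P_tu_t$. Two facts bridge this gap. First, $\|u_t\|^2\ge\|P_tu_t\|^2$ by Lemma~\ref{lem:L2-nearisometry}, which already suffices for the inequality. Second, the vertical-suppression Corollary~\ref{cor:vertical-vanish} shows the defect actually vanishes. A minor point is the boundedness hypothesis of Lemma~\ref{lem:avg-energy}, which holds by the a priori bounds above.
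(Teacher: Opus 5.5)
Your proof is correct and follows essentially the same route as the paper: the variational characterization of the resolvent, compactness of $P_tu_t$ via Lemma~\ref{lem:avg-energy} and Rellich, the $\liminf$ inequality from (M1) together with $\|u_t\|_{L^2_t}\ge\|P_tu_t\|_{L^2_B}$, comparison with the recovery sequence $I_t\phi$, and uniqueness of the minimizer combined with the subsequence principle. The differences are only cosmetic. You obtain the a priori bounds by testing the Euler--Lagrange equation rather than from $\mathcal{J}_t[u_t]\le\mathcal{J}_t[I_tv]$, and your linear term $-2\langle I_tf,u\rangle$ is correctly normalized (the paper's $-2\alpha\langle u,I_tf\rangle$ has minimizer $\alpha(\alpha+H_t)^{-1}I_tf$, which is harmless by linearity). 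As you note, the appeal to Corollary~\ref{cor:vertical-vanish} is superfluous.
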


\begin{proof}
Fix $\alpha>0$ and $f\in L^2_B$.
Consider the strictly convex functionals
\[
\mathcal{J}_t[u] := E_t[u] + \alpha\|u\|_{L^2_t}^2 - 2\alpha\langle u, I_t f\rangle_{L^2_t},
\qquad u\in H^1_0(X_{t,0}),
\]
and
\[
\mathcal{J}_B[v] := E_B[v] + \alpha\|v\|_{L^2_B}^2 - 2\alpha\langle v, f\rangle_{L^2_B},
\qquad v\in H^1_0(B_0).
\]
By standard form theory, $\mathcal{J}_t$ and $\mathcal{J}_B$ have unique minimizers
\[
u_t=(\alpha+H_t)^{-1}I_t f \in H^1_0(X_{t,0}),\qquad v=(\alpha+H_B)^{-1}f\in H^1_0(B_0).
\]

\emph{Step 1: compactness of the pushed-forward minimizers.}
Set $v_t:=P_tu_t\in L^2_B$. Since $u_t$ minimizes $\mathcal{J}_t$, we have
\[
\mathcal{J}_t[u_t] \le \mathcal{J}_t[I_t v]
= E_t[I_tv]+\alpha\|I_tv\|_{L^2_t}^2 - 2\alpha\langle I_t v, I_t f\rangle_{L^2_t}.
\]
Using $I_t=P_t^*$ and $P_tI_t=\mathrm{Id}$, we have the exact identities
\[
\|I_tv\|_{L^2_t}^2=\langle I_tv,I_tv\rangle_{L^2_t}=\langle v,P_tI_tv\rangle_{L^2_B}=\|v\|_{L^2_B}^2,
\]
and
\[
\langle I_tv,I_tf\rangle_{L^2_t}=\langle v,P_tI_tf\rangle_{L^2_B}=\langle v,f\rangle_{L^2_B}.
\]
Moreover, Lemma~\ref{lem:lift-energy} gives $E_t[I_tv]=E_B[v]+o(1)$.
Hence $\sup_t \mathcal{J}_t[u_t]<\infty$, which implies $\sup_t\|u_t\|_{L^2_t}<\infty$ and $\sup_t E_t[u_t]<\infty$.
By Lemma~\ref{lem:avg-energy}, $\{v_t\}$ is bounded in $H^1_0(B_0)$, thus (up to a subsequence)
$v_t\rightharpoonup v_\infty$ weakly in $H^1_0$ and $v_t\to v_\infty$ strongly in $L^2_B$.

\emph{Step 2: identify the limit as the base resolvent.}
By Mosco (M1), applied to $u_t$ with $P_tu_t=v_t\to v_\infty$ in $L^2_B$,
\[
\liminf_{t\downarrow 0} E_t[u_t]\ge E_B[v_\infty].
\]
Also, using $I_t=P_t^*$ and $P_tu_t=v_t$,
\[
\langle u_t, I_t f\rangle_{L^2_t}=\langle P_tu_t,f\rangle_{L^2_B}=\langle v_t,f\rangle_{L^2_B}\to \langle v_\infty,f\rangle_{L^2_B}.
\]
Finally, since $\|P_t\|=\|I_t\|=1$ (as $I_t$ is an isometry and $P_t=I_t^*$), we have
\[
\|u_t\|_{L^2_t}\ge \|P_tu_t\|_{L^2_B}=\|v_t\|_{L^2_B},
\]
hence
\[
\liminf_{t\downarrow 0}\|u_t\|_{L^2_t}^2 \ge \lim_{t\downarrow 0}\|v_t\|_{L^2_B}^2 = \|v_\infty\|_{L^2_B}^2.
\]
Putting these together yields
\[
\liminf_{t\downarrow 0}\mathcal{J}_t[u_t]\ \ge\ \mathcal{J}_B[v_\infty].
\]
On the other hand, the recovery sequence $I_t v$ (Mosco (M2)) implies
\[
\limsup_{t\downarrow 0}\inf \mathcal{J}_t \ \le\ \limsup_{t\downarrow 0}\mathcal{J}_t[I_tv] \ =\ \mathcal{J}_B[v].
\]
Therefore $\mathcal{J}_B[v_\infty]\le \mathcal{J}_B[v]$. By uniqueness of the minimizer of $\mathcal{J}_B$,
we have $v_\infty=v$.
Since the limit is unique, the whole family $v_t=P_tu_t$ converges to $v$ in $L^2_B$, proving \eqref{eq:strong-resolvent}.
\end{proof}

\subsection{Strong convergence of compressed heat operators}\label{subsec:semigroup}

We now pass from resolvents to heat operators.

\begin{theorem}[Strong convergence of compressed heat operators]\label{thm:semigroup}
Assume the hypotheses of Theorem~\ref{thm:resolvent}. Then for every $\tau>0$,
\begin{equation}\label{eq:strong-semigroup}
S_t(\tau)=P_t e^{-\tau H_t} I_t \longrightarrow e^{-\tau H_B}=T_B(\tau)
\qquad\text{strongly in }\mathcal{B}(L^2_B).
\end{equation}
Moreover, the convergence is uniform for $\tau$ in compact subsets of $(0,\infty)$.
\end{theorem}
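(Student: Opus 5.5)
The plan is to reduce the heat operators to polynomials in the resolvent, via a Stone--Weierstrass approximation, and to check convergence of the compressed polynomials by induction on the degree. The point needing care is that compression is not multiplicative: $P_tT_t(\tau)I_t$ is not a functional calculus of a single operator on $L^2_B$. Theorem~\ref{thm:resolvent} therefore does not immediately give the semigroup limit, and the vertical suppression of Corollary~\ref{cor:vertical-vanish} has to be used to restore approximate multiplicativity.

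Fix $\alpha>0$ and write $R_t:=(\alpha+H_t)^{-1}$ and $R_B:=(\alpha+H_B)^{-1}$.

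\emph{Step 1 (asymptotic invariance of the fiber-constant sector).} For $g\in L^2_B$, the function $w_t:=R_tI_tg$ satisfies
\[
E_t[w_t]+\alpha\|w_t\|^2_{L^2_t}=\langle I_tg,w_t\rangle\le \|g\|\,\|w_t\|,
\]
using $\|I_tg\|=\|g\|$ (Lemma~\ref{lem:L2-nearisometry}). Hence $E_t[w_t]\le \alpha^{-1}\|g\|^2$, uniformly in $t$. Corollary~\ref{cor:vertical-vanish}, applied quantitatively through Proposition~\ref{prop:vertical-poincare-global}, then gives
\[
\|(\Pi_t^{\mathrm{fc}})^\perp R_tI_tg\|_{L^2_t}\le C s(t)\alpha^{-1/2}\|g\|\to 0 .
\]
This holds uniformly for $g$ in bounded sets.

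\emph{Step 2 (powers of the resolvent).} I will show by induction that $P_tR_t^kI_t\to R_B^k$ strongly for every $k\ge1$. The case $k=1$ is Theorem~\ref{thm:resolvent}. For the inductive step, insert $\mathrm{Id}=\Pi^{\mathrm{fc}}_t+(\Pi^{\mathrm{fc}}_t)^\perp$ with $\Pi^{\mathrm{fc}}_t=I_tP_t$:
\[
P_tR_t^{k+1}I_tf=(P_tR_t^kI_t)(P_tR_tI_tf)+P_tR_t^k(\Pi_t^{\mathrm{fc}})^\perp R_tI_tf .
\]
The second term is bounded by $\alpha^{-k}\|(\Pi_t^{\mathrm{fc}})^\perp R_tI_tf\|$, which tends to $0$ by Step~1. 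For the first term, $g_t:=P_tR_tI_tf\to R_Bf$ by the case $k=1$, and the operators $P_tR_t^kI_t$ are uniformly bounded by $\alpha^{-k}$ and converge strongly by the induction hypothesis. Hence
\[
(P_tR_t^kI_t)g_t\to R_B^{k}R_Bf=R_B^{k+1}f .
\]
By linearity, $P_t\,p(R_t)\,I_t\to p(R_B)$ strongly for every polynomial $p$ with $p(0)=0$.

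\emph{Step 3 (approximation and uniformity in $\tau$).} Write $e^{-\tau\lambda}=h_\tau((\alpha+\lambda)^{-1})$, where
\[
h_\tau(s):=e^{-\tau(1/s-\alpha)}\ \text{ for } s\in(0,1/\alpha],\qquad h_\tau(0):=0 .
\]
For $\tau>0$ this function is continuous on $[0,1/\alpha]$ and vanishes at $0$, so Weierstrass gives polynomials $p$ with $p(0)=0$ and $\|h_\tau-p\|_\infty<\delta$. The spectral theorem gives $\|h_\tau(R_t)-p(R_t)\|\le\delta$, and the same bound for $R_B$. Since $\|P_t\|,\|I_t\|\le 1$, a $3\delta$ argument together with Step~2 yields \eqref{eq:strong-semigroup}.

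For uniformity in $\tau\in[\tau_0,\tau_1]\subset(0,\infty)$, note that $\tau\mapsto h_\tau$ is continuous into $C[0,1/\alpha]$, so $\{h_\tau\}$ is compact there. Finitely many polynomials therefore approximate the whole family within $\delta$, and strong convergence for finitely many of them becomes uniform in $\tau$.

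The main obstacle is Step~2, specifically the error term carrying $(\Pi_t^{\mathrm{fc}})^\perp$. It is controlled only because the resolvent output has energy bounded uniformly in $t$ and the vertical gap converts that bound into a factor $O(s(t))$. One must check that this bound is uniform in the input (it is, being linear in $\|f\|$), so that it survives composition with the further resolvent factors $R_t^k$.
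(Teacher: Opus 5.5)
Your proof is correct, but it takes a different route from the paper's. The paper proves this theorem without using the vertical gap. It invokes the functional-calculus consequence of strong resolvent convergence (Kato, Reed--Simon), and sketches a direct proof: approximate $e^{-\tau x}$ uniformly on $[0,\infty)$ by rational functions of the resolvent, then use Theorem~\ref{thm:resolvent} and the uniform contraction bounds.

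You instead fix a single $\alpha$ and approximate $h_\tau$ by polynomials on $[0,1/\alpha]$. You handle the non-multiplicativity of compression by induction on powers, killing the cross term $P_tR_t^k(\Pi_t^{\mathrm{fc}})^\perp R_tI_tf$ with the vertical Poincar\'e inequality.

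Your remark that compression is not multiplicative is well taken. The cited theorem is stated on a fixed Hilbert space, and the paper's Euler approximants $(1+\tau x/n)^{-n}$ are powers of one resolvent, so they do need an extra argument. However, your claim that vertical suppression \emph{has} to be used is too strong. Theorem~\ref{thm:resolvent} holds for every $\alpha>0$. The identity $(\alpha+x)^{-1}(\beta+x)^{-1}=(\beta-\alpha)^{-1}\bigl[(\alpha+x)^{-1}-(\beta+x)^{-1}\bigr]$, together with uniform difference quotients in $\alpha$ for squares, makes the closed linear span of $\{(\alpha+x)^{-1}\}_{\alpha>0}$ a point-separating, nowhere-vanishing subalgebra of $C_0([0,\infty))$. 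Stone--Weierstrass then gives density. Since $\phi\mapsto P_t\phi(H_t)I_t$ is linear with norm at most $\|\phi\|_\infty$, a $3\delta$ argument gives strong convergence for every $\phi\in C_0([0,\infty))$ by linearity alone. This covers $e^{-\tau x}$ and also the powers $(\alpha+x)^{-k}$ in your Step~2.

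So the paper's route needs only the compressed resolvent limit at all $\alpha$ plus $I_t=P_t^*$ isometric, and it works for any such identification. Yours needs only one $\alpha$, but it uses the vertical gap. That gap is a standing hypothesis of Sections~\ref{sec:mosco}--\ref{sec:semigroup-kernel}, though not part of the Mosco hypothesis itself. In exchange you obtain the quantitative leakage bound $\|(\Pi_t^{\mathrm{fc}})^\perp R_tI_tf\|\le Cs(t)\alpha^{-1/2}\|f\|$. This is the resolvent analogue of Lemma~\ref{lem:leakage}, which the paper only introduces later for the (K)$\Rightarrow$(S) equivalence.
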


\begin{proof}
For nonnegative selfadjoint operators, strong resolvent convergence implies strong convergence of
the bounded functional calculus for every bounded continuous function vanishing at infinity \cite{Kato,ReedSimonI}.
In particular, for each fixed $\tau>0$ the function $x\mapsto e^{-\tau x}$ yields
\[
P_t e^{-\tau H_t} I_t \to e^{-\tau H_B}\quad\text{strongly on }L^2_B.
\]
A direct proof can be obtained by approximating $e^{-\tau x}$ uniformly on $[0,\infty)$ by rational
functions in $(\alpha+x)^{-1}$ (or by Euler approximants $(1+\tau x/n)^{-n}$) and using
Theorem~\ref{thm:resolvent} together with uniform operator bounds \eqref{eq:St-contraction}.
Uniformity on compact $\tau$-intervals follows from uniform approximation.
\end{proof}

\subsection{Heat kernels and bilinearization}\label{subsec:kernels}

On the precompact domains $B_0$ and $X_{t,0}$ (with smooth boundary), the resolvents are compact and the
heat operators admit integral kernels.

\begin{lemma}[Kernel representation]\label{lem:kernel-representation}
For each $\tau>0$ there exist symmetric kernels
\[
K_t(\cdot,\cdot;\tau)\in L^2(X_{t,0}\times X_{t,0}),\qquad
K_B(\cdot,\cdot;\tau)\in L^2(B_0\times B_0),
\]
such that for all $F,G\in L^2_t$ and all $\phi,\psi\in L^2_B$,
\begin{align}
\langle F,T_t(\tau)G\rangle_{L^2_t}
&=\int_{X_{t,0}\times X_{t,0}} K_t(x,y;\tau)\,F(x)\,G(y)\,d\mu_t(x)\,d\mu_t(y),\label{eq:kernel-total}\\
\langle \phi,T_B(\tau)\psi\rangle_{L^2_B}
&=\int_{B_0\times B_0} K_B(b,b';\tau)\,\phi(b)\,\psi(b')\,d\mu_B(b)\,d\mu_B(b').\label{eq:kernel-base}
\end{align}
\end{lemma}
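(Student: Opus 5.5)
The plan is to obtain both kernels from the spectral theorem, using compactness of the resolvents on the precompact domains. On $B_0$, with smooth boundary and Dirichlet conditions, $H_B$ has compact resolvent by Rellich, since $H^1_0(B_0)\hookrightarrow L^2_B$ compactly. For fixed $t$, $X_{t,0}=\Pi_t^{-1}(B_0)$ is a compact manifold with smooth boundary $\Pi_t^{-1}(\partial B_0)$, because $\Pi_t$ is a submersion with compact fibers. Hence $H_t$ also has compact resolvent. This gives orthonormal eigenbases $\{\phi_k\}\subset L^2_t$ and $\{\psi_k\}\subset L^2_B$, with eigenvalues $0\le\lambda_1\le\lambda_2\le\cdots\to\infty$, and similarly $\{\mu_k\}$ for $H_B$.

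I would then define
\[
K_t(x,y;\tau):=\sum_k e^{-\tau\lambda_k}\phi_k(x)\phi_k(y),\qquad
K_B(b,b';\tau):=\sum_k e^{-\tau\mu_k}\psi_k(b)\psi_k(b').
\]
The products $\phi_k\otimes\phi_k$ are orthonormal in $L^2(X_{t,0}\times X_{t,0})$. So the series converges in $L^2$ of the product exactly when $\sum_k e^{-2\tau\lambda_k}<\infty$, that is, when $T_t(\tau)$ is Hilbert--Schmidt.

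That summability is the only non-formal step, and I would get it from Weyl's law on the compact $d$-dimensional manifolds with boundary: $\lambda_k\ge ck^{2/d}$ for large $k$. A cleaner alternative avoids eigenvalue asymptotics. Write $T(\tau)=(1+H)^{-m}\,(1+H)^m e^{-\tau H}$. The second factor is bounded. For $m>d/4$, $(1+H)^{-m}$ maps $L^2$ into $H^{2m}\subset L^\infty$ by elliptic regularity and Sobolev embedding, and any bounded operator $L^2\to L^\infty$ on a finite-measure space is Hilbert--Schmidt. Symmetry of the kernels is immediate from the real, symmetric form of the series.

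The bilinear identities \eqref{eq:kernel-total} and \eqref{eq:kernel-base} follow by expanding $G=\sum_k\langle\phi_k,G\rangle\phi_k$. This gives $\langle F,T_t(\tau)G\rangle=\sum_k e^{-\tau\lambda_k}\langle F,\phi_k\rangle\langle\phi_k,G\rangle$. That sum equals $\langle F\otimes G, K_t\rangle_{L^2(X_{t,0}^2)}$, and the interchange is justified because $F\otimes G\in L^2$ of the product and the kernel series converges in that space. The base case is identical.

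The main obstacle is only the trace-class/Hilbert--Schmidt justification. Everything else is formal. No uniformity in $t$ is claimed here, so $t$-dependent constants are harmless.
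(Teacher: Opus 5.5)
Your proposal is correct and follows essentially the same route as the paper. The paper likewise uses Rellich compactness of $H^1_0\hookrightarrow L^2$ on the precompact domains to get an orthonormal eigenbasis, and then writes the kernel as the spectral series $\sum_k e^{-\tau\lambda_k}\varphi_k(x)\varphi_k(y)$, converging in $L^2$ of the product. What you add is an actual justification of that $L^2$ convergence, i.e.\ the Hilbert--Schmidt property (via Weyl's law, or via $(1+H)^{-m}:L^2\to L^\infty$ on a finite-measure space); the paper's sketch only asserts it.
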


\begin{proof}[Proof sketch]
On a precompact domain with Dirichlet boundary, the embedding $H^1_0\hookrightarrow L^2$ is compact \cite{AdamsFournier},
hence $(\alpha+H)^{-1}$ is compact and $H$ has discrete spectrum with an $L^2$-orthonormal basis of eigenfunctions.
The kernel is given by the usual spectral expansion $K(x,y;\tau)=\sum_k e^{-\tau\lambda_k}\varphi_k(x)\varphi_k(y)$,
converging in $L^2$ on the product domain; see \cite{DGRefs,HeatKernelRefs}
\end{proof}

The compressed operator $S_t(\tau)$ admits an exact bilinear kernel expression.

\begin{proposition}[Bilinearization identity]\label{prop:bilinearization}
For every $\tau>0$ and every $\Phi,\Psi\in L^2_B$,
\begin{equation}\label{eq:bilinearization-operator}
\langle \Phi, S_t(\tau)\Psi\rangle_{L^2_B}
=\langle I_t\Phi, T_t(\tau) I_t\Psi\rangle_{L^2_t}.
\end{equation}
If the kernels of Lemma~\ref{lem:kernel-representation} are used, then
\begin{equation}\label{eq:bilinearization-kernel}
\langle \Phi, S_t(\tau)\Psi\rangle_{L^2_B}
=
\int_{X_{t,0}\times X_{t,0}} K_t(x,y;\tau)\,(I_t\Phi)(x)\,(I_t\Psi)(y)\,d\mu_t(x)\,d\mu_t(y),
\end{equation}
and similarly
\begin{equation}\label{eq:bilinearization-base}
\langle \Phi, T_B(\tau)\Psi\rangle_{L^2_B}
=
\int_{B_0\times B_0} K_B(b,b';\tau)\,\Phi(b)\,\Psi(b')\,d\mu_B(b)\,d\mu_B(b').
\end{equation}
\end{proposition}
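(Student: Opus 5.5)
The plan is to reduce everything to the adjoint identity $I_t=P_t^*$ of Lemma~\ref{lem:adjoint-lock}, followed by the kernel representation of Lemma~\ref{lem:kernel-representation}. No limiting argument is involved; the statement holds for each fixed $t$ and $\tau$.

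\emph{Operator identity \eqref{eq:bilinearization-operator}.} By definition \eqref{eq:compressed-semigroup}, $S_t(\tau)=P_tT_t(\tau)I_t$. For $\Phi,\Psi\in L^2_B$, I move $P_t$ to the other side of the inner product using $P_t^*=I_t$:
\[
\langle \Phi, P_t T_t(\tau) I_t\Psi\rangle_{L^2_B}=\langle I_t\Phi, T_t(\tau)I_t\Psi\rangle_{L^2_t}.
\]
This is exactly the defining property of the adjoint for the bounded operator $P_t$. Since $I_t$ is an isometry by Lemma~\ref{lem:L2-nearisometry}, every term is finite.

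\emph{Kernel forms.} Next I apply \eqref{eq:kernel-total} with $F:=I_t\Phi$ and $G:=I_t\Psi$, both of which lie in $L^2_t$. This gives \eqref{eq:bilinearization-kernel}. The base identity \eqref{eq:bilinearization-base} is simply \eqref{eq:kernel-base} with $\phi=\Phi$ and $\psi=\Psi$. One point needs justification: the double integrals must converge absolutely. Since $K_t(\cdot,\cdot;\tau)\in L^2(X_{t,0}\times X_{t,0})$ and $F\otimes G\in L^2(X_{t,0}\times X_{t,0})$, Cauchy--Schwarz on the product space bounds the integral by $\|K_t\|_{L^2}\,\|F\|\,\|G\|$. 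Fubini then identifies the double integral with $\langle F,T_t(\tau)G\rangle$, using the spectral expansion of the kernel. The same argument works on $B_0$.

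\emph{Expected difficulty.} I do not expect a real obstacle here. The only delicate point is bookkeeping: the measures must be the Riemannian measures $\mu_t$ and $\mu_B$ appearing in the inner products, and the normalization factor $A_t^{-1/2}$ must stay inside $I_t\Phi$ rather than being absorbed into the measure. Both of these are fixed by the conventions in Definition~\ref{def:ItPt-correct}.
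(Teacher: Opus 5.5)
Your proposal is correct and follows the paper's proof: you move $P_t$ across the inner product using $I_t=P_t^*$ (Lemma~\ref{lem:adjoint-lock}), and then you insert the kernel representations of Lemma~\ref{lem:kernel-representation}. Your added Cauchy--Schwarz remark on absolute convergence of the double integrals is a harmless extra that the paper leaves implicit.
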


\begin{proof}
By Lemma~\ref{lem:adjoint-lock}, $I_t=P_t^*$, hence
\[
\langle \Phi, S_t(\tau)\Psi\rangle_{L^2_B}
=\langle \Phi, P_tT_t(\tau)I_t\Psi\rangle_{L^2_B}
=\langle I_t\Phi, T_t(\tau)I_t\Psi\rangle_{L^2_t},
\]
which is \eqref{eq:bilinearization-operator}. The kernel identities follow from \eqref{eq:kernel-total} and \eqref{eq:kernel-base}.
\end{proof}

\subsection{Interior bilinear heat-kernel pairing limit}\label{subsec:bilinear-limit}

We can now state the interior bilinear limit as a direct corollary of Theorem~\ref{thm:semigroup}.

\begin{theorem}[Interior bilinear heat-kernel pairing limit]\label{thm:bilinear-interior}
Assume the hypotheses of Theorem~\ref{thm:semigroup}, and let $B_0\Subset B_{\mathrm{reg}}$.
Then for every $\tau>0$ and every $\Phi,\Psi\in L^2(B_0)$,
\begin{equation}\label{eq:bilinear-limit-L2}
\int_{X_{t,0}\times X_{t,0}} K_t(x,y;\tau)\,(I_t\Phi)(x)\,(I_t\Psi)(y)\,d\mu_t(x)\,d\mu_t(y)
\ \longrightarrow\
\int_{B_0\times B_0} K_B(b,b';\tau)\,\Phi(b)\,\Psi(b')\,d\mu_B(b)\,d\mu_B(b').
\end{equation}
In particular, the convergence holds for all $\Phi,\Psi\in C_c^\infty(B_0)$.
\end{theorem}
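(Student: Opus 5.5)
The plan is to deduce the statement directly from Theorem~\ref{thm:semigroup} and the bilinearization identity of Proposition~\ref{prop:bilinearization}.

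First, for fixed $\tau>0$ and $\Phi,\Psi\in L^2(B_0)$, I rewrite the left-hand side of \eqref{eq:bilinear-limit-L2} using \eqref{eq:bilinearization-kernel} as $\langle \Phi, S_t(\tau)\Psi\rangle_{L^2_B}$. This step is exact and needs no approximation, because $I_t=P_t^*$ (Lemma~\ref{lem:adjoint-lock}). Likewise, by \eqref{eq:bilinearization-base}, the right-hand side equals $\langle \Phi, T_B(\tau)\Psi\rangle_{L^2_B}$. The claim therefore reduces to weak convergence of $S_t(\tau)$ to $T_B(\tau)$ tested against the fixed pair $(\Phi,\Psi)$.

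Second, I apply Theorem~\ref{thm:semigroup}, which gives $S_t(\tau)\Psi\to T_B(\tau)\Psi$ in $L^2_B$. Cauchy--Schwarz then yields
\[
\bigl|\langle \Phi, S_t(\tau)\Psi\rangle_{L^2_B}-\langle \Phi, T_B(\tau)\Psi\rangle_{L^2_B}\bigr|\le \|\Phi\|_{L^2_B}\,\|(S_t(\tau)-T_B(\tau))\Psi\|_{L^2_B}\to 0 .
\]
The case $\Phi,\Psi\in C_c^\infty(B_0)$ is immediate, since $C_c^\infty(B_0)\subset L^2(B_0)$. The uniform bound \eqref{eq:St-contraction} also shows the convergence is locally uniform in $(\Phi,\Psi)$ on bounded sets. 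In addition, it is uniform for $\tau$ in compact subsets of $(0,\infty)$, by the uniformity clause of Theorem~\ref{thm:semigroup}.

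The only step requiring care is justifying that the kernel integrals in \eqref{eq:bilinearization-kernel}--\eqref{eq:bilinearization-base} are well defined. Since $K_t(\cdot,\cdot;\tau)\in L^2(X_{t,0}\times X_{t,0})$ and $I_t\Phi\otimes I_t\Psi\in L^2$ of the product space (Lemma~\ref{lem:L2-nearisometry}), these integrals converge absolutely. Their equality with the corresponding quadratic forms is Lemma~\ref{lem:kernel-representation}. Consequently no Gaussian bounds or dominated convergence are needed at this interior stage. The real analytic content sits entirely in Theorem~\ref{thm:semigroup}, i.e.\ in the Mosco argument; the main obstacle is thus upstream rather than in this corollary.
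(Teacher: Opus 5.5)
Your proof is correct and is essentially the paper's argument: rewrite both sides via Proposition~\ref{prop:bilinearization} as $\langle\Phi,S_t(\tau)\Psi\rangle_{L^2_B}$ and $\langle\Phi,T_B(\tau)\Psi\rangle_{L^2_B}$, then test the strong convergence of Theorem~\ref{thm:semigroup} against $\Phi$. One side remark needs correcting, though it plays no role in the statement: strong convergence plus the bound \eqref{eq:St-contraction} gives uniformity only for $\Psi$ in compact subsets of $L^2_B$ (with $\Phi$ bounded), whereas uniformity over bounded sets of both $\Phi$ and $\Psi$ would amount to operator-norm convergence, which these ingredients do not provide.
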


\begin{proof}
By Proposition~\ref{prop:bilinearization}, the left-hand side of \eqref{eq:bilinear-limit-L2} equals
$\langle \Phi,S_t(\tau)\Psi\rangle_{L^2_B}$ and the right-hand side equals $\langle \Phi,T_B(\tau)\Psi\rangle_{L^2_B}$.
By Theorem~\ref{thm:semigroup}, $S_t(\tau)\Psi\to T_B(\tau)\Psi$ strongly in $L^2_B$; testing against $\Phi$
gives \eqref{eq:bilinear-limit-L2}.
\end{proof}

\subsection{Strong convergence versus bilinear pairing limits (a rigorous equivalence)}\label{subsec:equivalence}

For later use (and to match the ``compressed semigroup $\Leftrightarrow$ bilinear kernel'' viewpoint precisely),
we record a rigorous equivalence at the level of the \emph{whole family} $\{\tau>0\}$.
The only extra input needed beyond bilinear matrix-coefficient convergence is an ``asymptotic semigroup'' property
for $S_t(\tau)$, which follows from the vertical gap and smoothing.

\begin{lemma}[Energy smoothing for the heat flow]\label{lem:smoothing}
For every $\sigma>0$ and every $G\in L^2_t$,
\begin{equation}\label{eq:smoothing}
E_t[T_t(\sigma)G]\ \le\ \frac{C}{\sigma}\,\|G\|_{L^2_t}^2,
\end{equation}
where $C>0$ is universal.
\end{lemma}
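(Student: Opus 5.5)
The plan is to use the spectral theorem for the nonnegative selfadjoint operator $H_t$ associated with the closed form $E_t$. The key identity is $E_t[w]=\|H_t^{1/2}w\|_{L^2_t}^2$ for $w\in\mathrm{Dom}(E_t)$. Take $w=T_t(\sigma)G=e^{-\sigma H_t}G$. For $\sigma>0$ and $G\in L^2_t$, this vector lies in $\mathrm{Dom}(H_t)\subset\mathrm{Dom}(E_t)$, by the bounded functional calculus applied to $x\mapsto x e^{-\sigma x}$. This gives
\[
E_t[T_t(\sigma)G]=\int_{[0,\infty)} \lambda\, e^{-2\sigma\lambda}\,d\langle \mathbb{E}_\lambda G,G\rangle_{L^2_t},
\]
where $\mathbb{E}_\lambda$ is the spectral resolution of $H_t$. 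Since the resolvent of $H_t$ is compact on the precompact domain (Lemma~\ref{lem:kernel-representation}), one may equivalently write this as the eigenfunction sum $\sum_k \lambda_k e^{-2\sigma\lambda_k}|\langle G,\varphi_k\rangle|^2$. That concrete form avoids any measure-theoretic subtleties.

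The only remaining step is the elementary scalar bound
\[
\sup_{\lambda\ge 0}\lambda e^{-2\sigma\lambda}=\frac{1}{2e\sigma},
\]
attained at $\lambda=1/(2\sigma)$. It yields
\[
E_t[T_t(\sigma)G]\le \frac{1}{2e\sigma}\int d\langle \mathbb{E}_\lambda G,G\rangle=\frac{1}{2e\sigma}\|G\|_{L^2_t}^2 .
\]
So \eqref{eq:smoothing} holds with the universal constant $C=1/(2e)$. This constant is independent of $t$, of the geometry, and of the boundary condition. The same argument applies verbatim for Neumann or nonnegative Robin forms, since only nonnegativity and selfadjointness of the generator are used.

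An alternative, spectral-free route is also available. Differentiate $\|u(\sigma)\|^2$ with $u(\sigma)=T_t(\sigma)G$ to get $\frac{d}{d\sigma}\|u\|^2=-2E_t[u]$. Monotonicity of $\sigma\mapsto E_t[u(\sigma)]$ follows from $\frac{d}{d\sigma}E_t[u]=-2\|H_tu\|^2\le0$. Integrating the first identity then gives
\[
2\sigma E_t[u(\sigma)]\le 2\int_0^\sigma E_t[u(s)]\,ds\le \|G\|^2 .
\]
This yields the bound with $C=1/2$. The derivative computations can be justified for $\sigma>0$ because the semigroup is analytic.

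There is no real obstacle here. The only point requiring care is the domain issue: one must check that $T_t(\sigma)G\in\mathrm{Dom}(E_t)$ and that $E_t$ coincides with $\|H_t^{1/2}\cdot\|^2$ there. Both follow from the Friedrichs/form representation theorem cited in Section~\ref{subsec:forms}. The claim that $C$ is universal is the feature used later, because it makes the bound uniform in $t$.
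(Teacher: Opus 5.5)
Your proof is correct and follows the paper's argument: use the spectral theorem to write $E_t[T_t(\sigma)G]=\|H_t^{1/2}e^{-\sigma H_t}G\|_{L^2_t}^2$, then bound it by $\sup_{\lambda\ge0}\lambda e^{-2\sigma\lambda}\,\|G\|_{L^2_t}^2$. Beyond the paper, you compute the explicit constant $C=1/(2e)$ and check the domain issue $T_t(\sigma)G\in\mathrm{Dom}(H_t)\subset\mathrm{Dom}(E_t)$, which the paper leaves implicit; your alternative energy-dissipation argument giving $C=1/2$ is also correct.
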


\begin{proof}
By spectral calculus,
\[
E_t[T_t(\sigma)G]=\|H_t^{1/2}e^{-\sigma H_t}G\|_{L^2_t}^2
=\int_0^\infty \lambda\,e^{-2\sigma\lambda}\,d\|E_\lambda G\|^2
\le \Big(\sup_{\lambda\ge 0}\lambda e^{-2\sigma\lambda}\Big)\|G\|_{L^2_t}^2
\le \frac{C}{\sigma}\|G\|_{L^2_t}^2.
\]
\end{proof}

\begin{lemma}[Leakage estimate from fiber-constant data]\label{lem:leakage}
Assume the vertical Poincar\'e inequality \eqref{eq:vertical-poincare-global} from Proposition~\ref{prop:vertical-poincare-global}.
Then for every $\sigma>0$ and every $f\in L^2_B$,
\begin{equation}\label{eq:leakage}
\|(\Pi_t^{\mathrm{fc}})^\perp\, T_t(\sigma) I_tf\|_{L^2_t}\ \le\ C\,\frac{s(t)}{\sqrt{\sigma}}\;\|f\|_{L^2_B},
\end{equation}
with $C$ independent of $t$.
\end{lemma}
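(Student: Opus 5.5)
Set $u:=T_t(\sigma)I_tf$. The plan is to chain Lemma~\ref{lem:smoothing} with the global vertical Poincar\'e inequality of Proposition~\ref{prop:vertical-poincare-global}, using only that $I_t$ is an isometry (Lemma~\ref{lem:L2-nearisometry}) and that $u$ lies in the form domain.

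\textbf{Step 1 (form domain).} Since $\sigma>0$, the heat operator maps $L^2_t$ into $\mathrm{Dom}(H_t)\subset\mathrm{Dom}(E_t)=H^1_0(X_{t,0})$. Hence $u\in H^1(X_{t,0})$, and Proposition~\ref{prop:vertical-poincare-global} applies to $u$:
\[
\|(\Pi_t^{\mathrm{fc}})^\perp u\|_{L^2_t}^2\le C\,s(t)^2\int_{X_{t,0}}|\nabla_F u|_{g_{F,t}}^2\,d\mu_t .
\]

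\textbf{Step 2 (vertical versus full energy).} We need to bound the vertical energy by the full energy:
\[
|\nabla_F u|^2_{g_{F,t}}\le (1+C\varepsilon_t)\,|\nabla u|^2_{g(t)} .
\]
For the product metric $g_t^\Pi=g_B\oplus g_{F,t}$ the vertical part is an orthogonal component, so $|\nabla_F u|^2_{g_{F,t}}\le|\nabla u|^2_{g_t^\Pi}$ holds exactly. The $C^1$ control \eqref{eq:sf-C1} makes $g(t)$ and $g_t^\Pi$ uniformly equivalent with constants $1+O(\varepsilon_t)$, which gives the displayed comparison. As in the proof of Corollary~\ref{cor:vertical-vanish}, this yields
\[
\int_{X_{t,0}}|\nabla_F u|^2\,d\mu_t\le C\,E_t[u].
\]
The constant is uniform for $t\le t_0$, since $\varepsilon_t\to0$ and in particular $\varepsilon_t$ is bounded.

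\textbf{Step 3 (smoothing and isometry).} Lemma~\ref{lem:smoothing} gives $E_t[u]\le (C/\sigma)\,\|I_tf\|_{L^2_t}^2$, and Lemma~\ref{lem:L2-nearisometry} gives $\|I_tf\|_{L^2_t}=\|f\|_{L^2_B}$. Combining Steps 1–3,
\[
\|(\Pi_t^{\mathrm{fc}})^\perp u\|_{L^2_t}^2\le C\,s(t)^2\,\sigma^{-1}\,\|f\|_{L^2_B}^2 .
\]
Taking square roots gives \eqref{eq:leakage}. The constant depends only on the Poincar\'e constant $c_{\mathrm{gap}}$, the universal bound $\sup_\lambda\lambda e^{-2\sigma\lambda}=(2e\sigma)^{-1}$, and $\sup_t\varepsilon_t$.

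\textbf{Main obstacle.} The only delicate point is Step 2. The vertical gradient is defined through the reference splitting rather than through $g(t)$, so we must check that the pointwise comparison $|\nabla_F u|^2\lesssim|\nabla u|^2_{g(t)}$ holds uniformly. This does not follow from $\varepsilon_t$-smallness alone, since the fibers shrink and one might worry about scale. However, the estimate only uses $C^0$-equivalence of $g(t)$ and $g_t^\Pi$, measured in the $g_t^\Pi$ norm, and this is scale-invariant. I would therefore state it explicitly as the inequality $g(t)^{-1}\ge(1+C\varepsilon_t)^{-1}(g_t^\Pi)^{-1}$ on covectors, and restrict to the vertical component.
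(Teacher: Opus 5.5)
Your proposal is correct and follows the paper's proof: apply the global vertical Poincar\'e inequality to $u=T_t(\sigma)I_tf$, bound the vertical energy by $E_t[u]$, then use the smoothing Lemma~\ref{lem:smoothing} and the $L^2$-isometry of $I_t$. Your Step~2 spells out the comparison $|\nabla_F u|^2_{g_{F,t}}\le(1+C\varepsilon_t)\,|\nabla u|^2_{g(t)}$, which the paper uses without comment when it writes $|\nabla_F u|^2\le|\nabla u|^2$.
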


\begin{proof}
Apply \eqref{eq:vertical-poincare-global} to $u=T_t(\sigma)I_tf$ and use $|\nabla_F u|^2\le |\nabla u|^2$:
\[
\|(\Pi_t^{\mathrm{fc}})^\perp u\|_{L^2_t}^2 \le C s(t)^2 \int_{X_{t,0}}|\nabla u|^2\,d\mu_t
= C s(t)^2\,E_t[u].
\]
Then invoke Lemma~\ref{lem:smoothing} and the $L^2$-isometry of $I_t$ to get
$E_t[u]\le C\sigma^{-1}\|I_tf\|_{L^2_t}^2=C\sigma^{-1}\|f\|_{L^2_B}^2$.
\end{proof}

\begin{proposition}[Asymptotic semigroup property of $S_t(\tau)$]\label{prop:asymptotic-semigroup}
For all $\tau,\sigma>0$,
\begin{equation}\label{eq:asymptotic-semigroup}
\|S_t(\tau)S_t(\sigma)-S_t(\tau+\sigma)\|_{L^2_B\to L^2_B}
\ \le\ C\,\frac{s(t)}{\sqrt{\sigma}}.
\end{equation}
In particular, for each fixed $\tau>0$, $\|S_t(\tau)^2-S_t(2\tau)\|_{op}\to 0$ as $t\downarrow 0$.
\end{proposition}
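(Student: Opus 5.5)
The proof is a direct algebraic decomposition. Using the semigroup property $T_t(\tau)T_t(\sigma)=T_t(\tau+\sigma)$ and the resolution of the identity $\mathrm{Id}=\Pi_t^{\mathrm{fc}}+(\Pi_t^{\mathrm{fc}})^\perp$ on $L^2_t$, we write
\[
S_t(\tau+\sigma)=P_tT_t(\tau)\bigl[\Pi_t^{\mathrm{fc}}+(\Pi_t^{\mathrm{fc}})^\perp\bigr]T_t(\sigma)I_t .
\]
Since $\Pi_t^{\mathrm{fc}}=I_tP_t$ (Lemma~\ref{lem:adjoint-lock}), the first term is exactly
\[
P_tT_t(\tau)I_t\,P_tT_t(\sigma)I_t=S_t(\tau)S_t(\sigma).
\]
Hence the defect is the single cross term
\[
S_t(\tau+\sigma)-S_t(\tau)S_t(\sigma)=P_tT_t(\tau)\,(\Pi_t^{\mathrm{fc}})^\perp\,T_t(\sigma)I_t .
\]

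To bound it in operator norm, fix $f\in L^2_B$. Because $\|P_t\|\le 1$ (Lemma~\ref{lem:L2-nearisometry}) and $T_t(\tau)$ is a contraction on $L^2_t$,
\[
\|P_tT_t(\tau)(\Pi_t^{\mathrm{fc}})^\perp T_t(\sigma)I_tf\|_{L^2_B}\le \|(\Pi_t^{\mathrm{fc}})^\perp T_t(\sigma)I_tf\|_{L^2_t}.
\]
The leakage estimate of Lemma~\ref{lem:leakage} bounds the right-hand side by $C\,s(t)\sigma^{-1/2}\|f\|_{L^2_B}$. Taking the supremum over $\|f\|\le1$ gives \eqref{eq:asymptotic-semigroup}. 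Specializing to $\sigma=\tau$ and letting $s(t)\downarrow0$ gives $\|S_t(\tau)^2-S_t(2\tau)\|_{op}\to0$.

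The only step needing care is that Lemma~\ref{lem:leakage} applies to $u=T_t(\sigma)I_tf$, i.e.\ that $u\in H^1_0(X_{t,0})=\mathrm{Dom}(E_t)$. This holds because $T_t(\sigma)$ maps $L^2_t$ into $\mathrm{Dom}(H_t)\subset\mathrm{Dom}(E_t)$ for $\sigma>0$.

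One should also check that the bound $|\nabla_Fu|^2\le|\nabla u|_{g(t)}^2$ used in that lemma holds only up to a factor $1+O(\varepsilon_t)$ under Assumption~\ref{ass:sf-C1}. For $t$ small this factor is absorbed into $C$, so the constant remains independent of $t$, $\tau$ and $\sigma$.

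I expect no real obstacle: the argument is a one-line identity followed by the leakage bound. Note that the estimate depends only on $\sigma$, the time of the factor acting first on the fiber-constant data, and is uniform in $\tau$.
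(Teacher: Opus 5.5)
Your proposal is correct and is essentially the paper's proof. Both use the identity $S_t(\tau)S_t(\sigma)-S_t(\tau+\sigma)=-P_tT_t(\tau)(\Pi_t^{\mathrm{fc}})^\perp T_t(\sigma)I_t$, obtained from $\Pi_t^{\mathrm{fc}}=I_tP_t$ and the semigroup law, and then bound it using $\|P_t\|\le 1$, $\|T_t(\tau)\|\le 1$ and Lemma~\ref{lem:leakage}; your extra checks (that $T_t(\sigma)I_tf\in\mathrm{Dom}(E_t)$, and that the $1+O(\varepsilon_t)$ factor is absorbed into $C$) only make explicit what the paper leaves implicit.
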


\begin{proof}
Using $P_tI_t=\mathrm{Id}$ and $\Pi_t^{\mathrm{fc}}=I_tP_t$,
\[
S_t(\tau)S_t(\sigma)-S_t(\tau+\sigma)
= P_tT_t(\tau)I_tP_tT_t(\sigma)I_t - P_tT_t(\tau)T_t(\sigma)I_t
= -P_tT_t(\tau)(\Pi_t^{\mathrm{fc}})^\perp T_t(\sigma)I_t.
\]
Taking operator norms and using $\|P_t\|\le 1$, $\|T_t(\tau)\|\le 1$ and Lemma~\ref{lem:leakage} gives \eqref{eq:asymptotic-semigroup}.
\end{proof}

We can now state a rigorous ``strong $\Leftrightarrow$ bilinear kernel'' equivalence for the \emph{full time family}.

\begin{theorem}[Strong compressed limit $\Longleftrightarrow$ bilinear kernel limits]\label{thm:equivalence-strong-bilinear}
Assume Proposition~\ref{prop:asymptotic-semigroup}.
Then the following two statements are equivalent:

\begin{itemize}
\item[(S)] For every $\tau>0$, $S_t(\tau)\to T_B(\tau)$ strongly on $L^2_B$.

\item[(K)] For every $\tau>0$ and every $\Phi,\Psi\in L^2(B_0)$,
\begin{equation}\label{eq:K-statement}
\int_{X_{t,0}\times X_{t,0}} K_t(x,y;\tau)\,(I_t\Phi)(x)\,(I_t\Psi)(y)\,d\mu_t(x)\,d\mu_t(y)
\ \longrightarrow\
\int_{B_0\times B_0} K_B(b,b';\tau)\,\Phi(b)\,\Psi(b')\,d\mu_B(b)\,d\mu_B(b').
\end{equation}
\end{itemize}

\end{theorem}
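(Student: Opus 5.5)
The direction (S)$\Rightarrow$(K) is immediate. By Proposition~\ref{prop:bilinearization}, the left-hand side of \eqref{eq:K-statement} equals $\langle \Phi,S_t(\tau)\Psi\rangle_{L^2_B}$ and the right-hand side equals $\langle\Phi,T_B(\tau)\Psi\rangle_{L^2_B}$. Strong convergence $S_t(\tau)\Psi\to T_B(\tau)\Psi$, tested against the fixed vector $\Phi$, therefore gives (K). This is exactly the argument of Theorem~\ref{thm:bilinear-interior}.

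For (K)$\Rightarrow$(S), I would first restate (K) via the same bilinearization identity: for every $\tau>0$, $S_t(\tau)\to T_B(\tau)$ in the weak operator topology. The task is then to upgrade weak convergence to strong convergence. The standard device is the following: if $A_t\to A$ weakly and $\|A_t\Psi\|\to\|A\Psi\|$, then $A_t\Psi\to A\Psi$ strongly, since
\[
\|A_t\Psi-A\Psi\|^2=\|A_t\Psi\|^2-2\,\mathrm{Re}\langle A_t\Psi,A\Psi\rangle+\|A\Psi\|^2 .
\]
So fix $\tau>0$ and $\Psi\in L^2_B$. Because $S_t(\tau)$ is selfadjoint by \eqref{eq:St-contraction},
\[
\|S_t(\tau)\Psi\|^2=\langle\Psi,S_t(\tau)^2\Psi\rangle .
\]
Proposition~\ref{prop:asymptotic-semigroup} with $\sigma=\tau$ lets us replace $S_t(\tau)^2$ by $S_t(2\tau)$ at cost at most $C s(t)\tau^{-1/2}\|\Psi\|^2\to 0$. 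Applying (K) at time $2\tau$ with $\Phi=\Psi$ then gives
\[
\langle\Psi,S_t(2\tau)\Psi\rangle\to\langle\Psi,T_B(2\tau)\Psi\rangle=\|T_B(\tau)\Psi\|^2,
\]
where the last equality uses the exact semigroup law and selfadjointness of $T_B$. Hence $\|S_t(\tau)\Psi\|\to\|T_B(\tau)\Psi\|$. For the cross term, (K) at time $\tau$ tested against the fixed vector $T_B(\tau)\Psi$ gives
\[
\langle S_t(\tau)\Psi,T_B(\tau)\Psi\rangle\to\|T_B(\tau)\Psi\|^2 .
\]
Substituting both limits into the expansion above yields $\|S_t(\tau)\Psi-T_B(\tau)\Psi\|^2\to 0$, which is (S).

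The only real obstacle is the norm-convergence step, because weak convergence alone does not control norms. It is resolved precisely by the asymptotic semigroup property, which in turn rests on the leakage estimate (Lemma~\ref{lem:leakage}) coming from the vertical gap. Two details need care there. First, $\sigma=\tau$ must be fixed and positive, so that the bound $s(t)/\sqrt{\sigma}$ is finite. Second, (K) must be available at time $2\tau$, which holds because (K) is assumed for all $\tau>0$. This is why the equivalence is stated for the whole time family rather than for a single $\tau$. Everything is real-valued, so $\mathrm{Re}$ can be dropped.
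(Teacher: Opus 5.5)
Your proposal is correct and follows the paper's proof. The paper also gets (S)$\Rightarrow$(K) by bilinearization, and gets (K)$\Rightarrow$(S) by upgrading weak to strong convergence via $\|S_t(\tau)\Psi\|^2=\langle\Psi,S_t(\tau)^2\Psi\rangle$, the asymptotic semigroup bound with $\sigma=\tau$, and (K) at time $2\tau$; the only difference is that you write out the expansion of $\|S_t(\tau)\Psi-T_B(\tau)\Psi\|^2$ explicitly, where the paper cites the weak-plus-norm criterion.
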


\begin{proof}
\emph{(S)$\Rightarrow$(K)} is exactly Theorem~\ref{thm:bilinear-interior}.

\emph{(K)$\Rightarrow$(S).}
Fix $\tau>0$ and $f\in L^2_B$. Set $x_t:=S_t(\tau)f\in L^2_B$.
By Proposition~\ref{prop:bilinearization}, (K) implies that for every $\Phi\in L^2_B$,
$\langle \Phi, x_t\rangle \to \langle \Phi, T_B(\tau)f\rangle$, hence $x_t\rightharpoonup T_B(\tau)f$ weakly in $L^2_B$.

To upgrade to strong convergence, it suffices to show $\|x_t\|\to \|T_B(\tau)f\|$.
Since $S_t(\tau)$ is selfadjoint,
\[
\|x_t\|_{L^2_B}^2=\langle S_t(\tau)f, S_t(\tau)f\rangle=\langle f, S_t(\tau)^2 f\rangle.
\]
By Proposition~\ref{prop:asymptotic-semigroup},
\[
\langle f,S_t(\tau)^2 f\rangle = \langle f,S_t(2\tau)f\rangle + o(1)\|f\|^2.
\]
Applying (K) at time $2\tau$ yields $\langle f,S_t(2\tau)f\rangle\to \langle f,T_B(2\tau)f\rangle$.
Finally, since $T_B(\tau)$ is a selfadjoint semigroup,
\[
\langle f,T_B(2\tau)f\rangle = \langle T_B(\tau)f, T_B(\tau)f\rangle = \|T_B(\tau)f\|^2.
\]
Thus $\|x_t\|^2\to \|T_B(\tau)f\|^2$, and together with weak convergence this implies $x_t\to T_B(\tau)f$ strongly.
Since $f$ was arbitrary, we obtain (S).
\end{proof}

\subsection{Section summary}\label{subsec:sec4-summary}

We have shown the following:
(i) Mosco convergence of $E_t$ to $E_B$ implies strong resolvent convergence \eqref{eq:strong-resolvent}, 
hence strong convergence of the compressed heat operators $S_t(\tau)\to T_B(\tau)$ for every $\tau>0$;
(ii) by kernel representation and bilinearization, strong convergence yields the interior bilinear heat-kernel pairing limit 
\eqref{eq:bilinear-limit-L2} on $B_0\Subset B_{\mathrm{reg}}$; and
(iii) conversely, the family of bilinear kernel limits for all $\tau>0$ implies strong convergence of $S_t(\tau)$, 
once the asymptotic semigroup property \eqref{eq:asymptotic-semigroup} is observed, a property that follows from 
the vertical gap and smoothing.

In the next section we move beyond $B_{\mathrm{reg}}$:
near each discriminant point $p_j\in D$ the base metric is modeled by a flat cone, and the bilinear pairing
requires an intrinsic conic renormalization.
\section{Conic parametrix on the base and the renormalized bilinear functional}\label{sec:conic-ren}

Let $B$ be a smooth connected surface, and let
\[
D=\{p_1,\dots,p_N\}\subset B
\]
be a finite discriminant set. Set $B_{\mathrm{reg}}:=B\setminus D$ and let $g_B$ be a smooth Riemannian metric on
$B_{\mathrm{reg}}$. We assume that $g_B$ has \emph{isolated conic asymptotics} at each $p_j$ in the sense below.

This section has four goals:
\begin{enumerate}
\item Build wedge charts near $D$ and formulate a $C^1$ conic approximation of $g_B$.
\item Introduce the flat-cone heat kernel and construct a conic parametrix for the base heat kernel with weighted remainders.
\item Define a \emph{cutoff-independent} renormalized bilinear functional $K^{\mathrm{ren}}_B(\Phi,\Psi;\tau)$.
\item Identify the collapsed total-space bilinear limits with $K^{\mathrm{ren}}_B$ for each fixed $\tau>0$.
\end{enumerate}

\subsection{Wedge charts and $C^1$ conic asymptotics of $g_B$ near $D$}\label{subsec:wedge-cone}

Fix $p_j\in D$. Let $U_j\subset B$ be a coordinate neighborhood of $p_j$, and choose polar coordinates
\[
(r,\theta)\in(0,r_0)\times \mathbb{S}^1\qquad \text{on }U_j\setminus\{p_j\}
\]
centered at $p_j$ (so that $r\downarrow 0$ corresponds to approaching $p_j$). We assume:

\begin{assumption}[$C^1$ conic asymptotics]\label{ass:conic-asymptotics}
There exist parameters $\alpha_j>0$ (cone angles $2\pi\alpha_j$) and $\beta\in(0,1]$ such that on $U_j\setminus\{p_j\}$,
\begin{equation}\label{eq:conic-metric-expansion}
g_B = g_{j}^{\mathrm{cone}} + q_j,\qquad
g_{j}^{\mathrm{cone}}:=dr^2+\alpha_j^2 r^2\,d\theta^2,
\end{equation}
where the $2$-tensor $q_j$ satisfies the weighted $C^1$ bounds
\begin{equation}\label{eq:C1-cone-control}
|q_j|_{g^{\mathrm{cone}}_j} \le C r^{\beta},\qquad
|\nabla^{\mathrm{cone}} q_j|_{g^{\mathrm{cone}}_j} \le C r^{\beta-1},
\end{equation}
for some $C>0$ independent of $(r,\theta)$.
In particular, the Riemannian measures satisfy
\begin{equation}\label{eq:measure-compare}
d\mu_{g_B} = (1+O(r^\beta))\, d\mu_{g^{\mathrm{cone}}_j},\qquad d\mu_{g^{\mathrm{cone}}_j} = \alpha_j r\,dr\,d\theta,
\end{equation}
and the distances induced by $g_B$ and $g^{\mathrm{cone}}_j$ are locally bi-Lipschitz equivalent on $U_j\setminus\{p_j\}$.
\end{assumption}

\begin{remark}\label{rem:cone-model-scope}
Assumption~\ref{ass:conic-asymptotics} is precisely what we need for a \emph{stable conic heat parametrix}
with remainders controlled by $(r+r')^\beta$ weights.
It does not require $g_B$ to extend smoothly across $p_j$.
\end{remark}
See Figure~\ref{fig:wedge-cone-model} for the wedge chart and the inner/outer decomposition used in the renormalization.


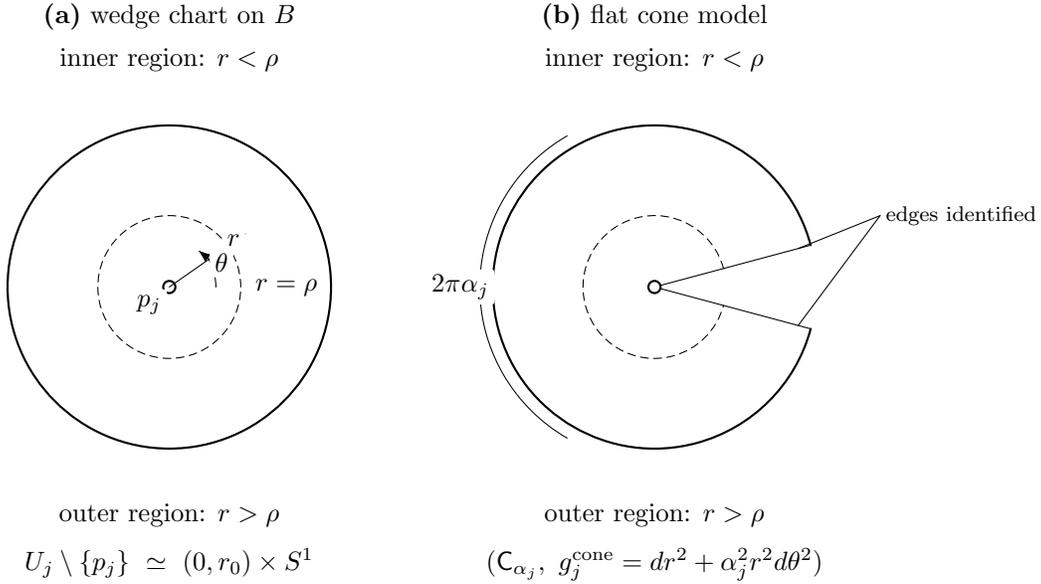
\begin{figure}[t]
\centering
\begin{tikzpicture}[
  >=Latex,
  font=\small,
  line cap=round,
  line join=round,
  lab/.style={fill=white, rounded corners=1pt, inner sep=2pt},
  boundary/.style={thick},
  cut/.style={thin},
  inner/.style={densely dashed, thin},
  auxarrow/.style={->, thin},
  leader/.style={thin} 
]

\def\Rout{2.15}          
\def\Rin{0.95}           
\def\sep{6.45}           
\def\labdist{0.65}       
\def\titledist{0.55}     
\def\belowdist{0.60}     

\def\halfcut{15}         

\def\Rr{1.25}            
\def\Rtheta{0.62}        

\begin{scope}[shift={(0,0)}]

  \draw[boundary] (0,0) circle (\Rout);
  \draw[inner]    (0,0) circle (\Rin);

  \draw[boundary, fill=white] (0,0) circle (2.3pt);
  \node[lab, below left=1pt] at (0,0) {$p_j$};

  \draw[auxarrow] (0,0) -- (35:\Rr)
    node[pos=0.70, lab, above right=-1pt] {$r$};

  \draw[auxarrow] (\Rtheta,0) arc (0:52:\Rtheta)
    node[pos=0.60, lab, right] {$\theta$};

  \node[lab, anchor=west] at (\Rin+0.12,0) {$r=\rho$};

  \node[lab, anchor=south] at (0,\Rout+\labdist+\titledist)
    {\textbf{(a)} wedge chart on $B$};

  \node[lab, anchor=south] at (0,\Rout+\labdist)
    {inner region: $r<\rho$};
  \node[lab, anchor=north] at (0,-\Rout-\labdist)
    {outer region: $r>\rho$};

  \node[lab, anchor=north] at (0,-\Rout-\labdist-\belowdist)
    {$U_j\setminus\{p_j\}\ \simeq\ (0,r_0)\times S^1$};

\end{scope}

\begin{scope}[shift={(\sep,0)}]

  \draw[boundary] (\halfcut:\Rout) arc (\halfcut:360-\halfcut:\Rout);

  \draw[cut] (0,0) -- (\halfcut:\Rout);
  \draw[cut] (0,0) -- (360-\halfcut:\Rout);

  \draw[inner] (\halfcut:\Rin) arc (\halfcut:360-\halfcut:\Rin);

  \draw[boundary, fill=white] (0,0) circle (2.3pt);

  \node[lab, anchor=south] at (0,\Rout+\labdist+\titledist)
    {\textbf{(b)} flat cone model};

  \node[lab, anchor=south] at (0,\Rout+\labdist)
    {inner region: $r<\rho$};
  \node[lab, anchor=north] at (0,-\Rout-\labdist)
    {outer region: $r>\rho$};

  \draw[cut] (120:{1.08*\Rout}) arc (120:240:{1.08*\Rout});
  \node[lab] at (180:{1.20*\Rout}) {$2\pi\alpha_j$};

  \node[lab, anchor=west] (edgelab) at (\Rout+0.85,0.95) {\scriptsize edges identified};
  \draw[leader] (edgelab.west) -- (\halfcut:{0.92*\Rout});
  \draw[leader] (edgelab.west) -- (360-\halfcut:{0.92*\Rout});

  \node[lab, anchor=north] at (0,-\Rout-\labdist-\belowdist)
    {$(\mathsf{C}_{\alpha_j},\ g^{\mathrm{cone}}_j=dr^2+\alpha_j^2 r^2 d\theta^2)$};

\end{scope}

\end{tikzpicture}

\vspace{0.35em}

\caption{Local wedge chart near a discriminant point \(p_j\in D\) and the corresponding flat-cone model.
Panel (a) shows polar coordinates \((r,\theta)\) on \(U_j\setminus\{p_j\}\subset B_{\mathrm{reg}}\), together with the
inner region \(r<\rho\) and outer region \(r>\rho\) used in the renormalization.
Panel (b) depicts the model cone \(\mathsf{C}_{\alpha_j}\) with cone angle \(2\pi\alpha_j\), for which \(g_B\) is \(C^1\)-close in Assumption~5.1.}
\label{fig:wedge-cone-model}
\end{figure}

\subsection{The flat-cone heat kernel and basic bounds}\label{subsec:cone-kernel}

Let $\mathsf{C}_{\alpha_j}:=((0,\infty)\times \mathbb{S}^1,\, g^{\mathrm{cone}}_j)$ denote the flat cone of angle $2\pi\alpha_j$.
Let $H^{\mathrm{cone}}_j$ be the Friedrichs realization of $-\Delta_{g^{\mathrm{cone}}_j}$ on $L^2(\mathsf{C}_{\alpha_j},d\mu_{g^{\mathrm{cone}}_j})$.
Denote by $K^{\mathrm{cone}}_j(\cdot,\cdot;\tau)$ its heat kernel.

One may write $K^{\mathrm{cone}}_j$ explicitly via Bessel functions; for our purposes we only need:
positivity, symmetry, Gaussian domination, and scale-invariant derivative bounds; see 
\cite{coneHeatKernelRefs,ConeParametrixRefs}.

\begin{lemma}[Model cone kernel estimates]\label{lem:cone-kernel-bounds}
For each $j$ and each $\tau_0>0$ there exist constants $C,c>0$ (depending only on $\alpha_j$ and $\tau_0$) such that for all
$0<\tau\le \tau_0$ and all $(r,\theta),(r',\theta')\in (0,r_0)\times\mathbb{S}^1$,
\begin{equation}\label{eq:cone-gaussian}
0 \le K^{\mathrm{cone}}_j((r,\theta),(r',\theta');\tau)
\le
\frac{C}{\tau}\exp\!\Big(-\frac{d_{\mathrm{cone}}((r,\theta),(r',\theta'))^2}{c\,\tau}\Big),
\end{equation}
and
\begin{equation}\label{eq:cone-derivative}
|\partial_r K^{\mathrm{cone}}_j| + |\partial_{r'} K^{\mathrm{cone}}_j|
\le
\frac{C}{\tau^{3/2}}\exp\!\Big(-\frac{d_{\mathrm{cone}}((r,\theta),(r',\theta'))^2}{c\,\tau}\Big).
\end{equation}
\end{lemma}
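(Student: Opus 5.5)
The plan is to reduce both bounds to $\tau=1$ by scaling and then read them off the explicit Bessel representation of the cone kernel, supplemented by local parabolic estimates away from the tip.

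\emph{Step 1: reduction to $\tau=1$.} The dilation $(r,\theta)\mapsto(\lambda r,\theta)$ is a homothety of $\mathsf{C}_{\alpha_j}$. Hence
\[
K^{\mathrm{cone}}_j((r,\theta),(r',\theta');\tau)=\tau^{-1}K^{\mathrm{cone}}_j\bigl((r/\sqrt\tau,\theta),(r'/\sqrt\tau,\theta');1\bigr).
\]
Consequently \eqref{eq:cone-gaussian} and \eqref{eq:cone-derivative} are equivalent to the bounds $K\le Ce^{-d^2/c}$ and $|\partial_rK|+|\partial_{r'}K|\le Ce^{-d^2/c}$ at $\tau=1$, uniformly on the whole cone. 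This also shows that the restriction $\tau\le\tau_0$ and the range $r,r'<r_0$ cost nothing.

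\emph{Step 2: Gaussian bound.} I would use the Friedrichs separation of variables,
\[
K^{\mathrm{cone}}_j=\frac{1}{2\pi\alpha_j\tau}\,e^{-(r^2+r'^2)/4\tau}\sum_{k\in\mathbb Z}I_{|k|/\alpha_j}\!\Big(\frac{rr'}{2\tau}\Big)e^{ik(\theta-\theta')}.
\]
Positivity and symmetry follow from the Markov property of the Friedrichs form. For the upper bound I would avoid summing the series directly. Instead:
\begin{itemize}
\item The cone has Euclidean volume growth, $\mu(B(x,R))\simeq R^2$ uniformly including balls centred near the tip, since the measure is $\alpha_jr\,dr\,d\theta$.
\item It satisfies a scale-invariant Poincar\'e inequality on balls, as a cone over a circle.
\end{itemize}
Grigor'yan--Saloff-Coste then gives the two-sided Gaussian bound with constants depending only on $\alpha_j$. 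Alternatively, the on-diagonal bound $C/\tau$ from Nash's inequality can be combined with Davies--Gaffney to produce the off-diagonal factor.

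\emph{Step 3: derivative bound.} I split into two regimes at $\tau=1$.
\begin{itemize}
\item \emph{Away from the tip ($r\ge 1/4$).} The ball of radius $1/8$ about $(r,\theta)$ is isometric to a flat Euclidean disc. Interior parabolic gradient estimates for the flat heat equation on the cylinder $B\times[1/2,1]$, applied to $K(\cdot,y;\cdot)$ and fed with Step 2, give $|\nabla K|\le Ce^{-d^2/c}$.
\item \emph{Near the tip ($r\le1/4$).} I differentiate the Bessel series termwise. The $k=0$ term is harmless, since $I_0'(z)=I_1(z)=O(z)$. For $|k|\ge1$ the bound $I_\nu(z)\le (z/2)^\nu e^{z}/\Gamma(\nu+1)$, together with $I_\nu'=I_{\nu-1}-\tfrac{\nu}{z}I_\nu$, bounds $\partial_r$ of the $k$-th term by $C_\nu r'(rr')^{\nu-1}e^{rr'/2}$ with $\nu=|k|/\alpha_j$. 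The factor $\Gamma(\nu+1)^{-1}$ makes the $k$-sum converge. After recombining with $e^{-(r^2+r'^2)/4}$ this gives the Gaussian factor in $d_{\mathrm{cone}}$, since $d_{\mathrm{cone}}^2\ge (r-r')^2$ and $r\le1/4$.
\end{itemize}

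\emph{Main obstacle.} The tip regime is the step I expect to be hardest, and specifically the lowest nonradial modes $|k|=1$. The termwise bound is uniform only when $\nu-1\ge0$, i.e.\ $\alpha_j\le1$. If $\alpha_j>1$, the factor $r^{1/\alpha_j-1}$ is not bounded as $r\to0$. In that case I would first try to show that the $k=\pm1$ contributions combine, through their $\theta$-dependence, after integration against the data, which would recover \eqref{eq:cone-derivative}. If this does not hold pointwise, the stated unweighted bound would require $\alpha_j\le1$. That bound is what the later parametrix needs.
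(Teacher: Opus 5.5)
\textbf{The main gap: the case $\alpha_j>1$.} Your Steps~1--2 are sound. For $\alpha_j\le 1$, Step~3 proves the lemma, modulo the small repairs listed at the end. The paper gives no argument beyond calling these bounds standard in Remark~\ref{rem:cone-kernel-refs}, so your outline is a genuine expansion of that remark. The gap is $\alpha_j>1$, and your closing suspicion is correct: this case cannot be closed, because \eqref{eq:cone-derivative} is false there.

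The cancellation you hope for between the $k=\pm1$ modes does not happen. At $\tau=1$ those two modes sum to $\frac{1}{2\pi\alpha_j}e^{-(r^2+r'^2)/4}I_{1/\alpha_j}(rr'/2)\cos(\theta-\theta')$, which is positive at $\theta=\theta'$. More generally, at $\theta=\theta'$ every mode enters $\partial_r$ with nonnegative sign (since $I_\nu'>0$), apart from the bounded term $-\tfrac r2K^{\mathrm{cone}}_j$. Using $I_\nu(z)\sim(z/2)^\nu/\Gamma(\nu+1)$, one gets for fixed $r'>0$
\[
\partial_r K^{\mathrm{cone}}_j\big((r,\theta),(r',\theta);1\big)
=\frac{(r'/4)^{1/\alpha_j}\,e^{-r'^2/4}}{2\pi\alpha_j\,\Gamma(1/\alpha_j)}\;r^{1/\alpha_j-1}\,\big(1+o(1)\big)
\qquad (r\downarrow0).
\]
This is unbounded when $\alpha_j>1$. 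Meanwhile $d_{\mathrm{cone}}\to r'$, so the right-hand side of \eqref{eq:cone-derivative} stays bounded. By scaling the same blow-up occurs at every $\tau\in(0,\tau_0]$, and one may take $r'=r_0/2$ to stay in the stated range.

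This is the familiar $r^{1/\alpha_j}\cos\theta$ singularity of harmonic and caloric functions at a cone point of angle larger than $2\pi$. Integrating against data does not remove it: for generic $\Psi$, the solution $e^{-\tau H^{\mathrm{cone}}_j}\Psi$ itself has $|\partial_r|\sim r^{1/\alpha_j-1}$ at the tip. So the lemma as stated (for all $\alpha_j>0$) is false for $\alpha_j>1$. You should state this dichotomy outright rather than leave it as an open obstacle.

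\textbf{What survives, and smaller repairs.} For $\alpha_j>1$ your termwise estimate does give the weighted bound
\[
|\partial_r K^{\mathrm{cone}}_j|\le\frac{C}{\tau^{3/2}}\Big(1+\big(\sqrt{\tau}/r\big)^{1-1/\alpha_j}\Big)\exp\Big(-\frac{d_{\mathrm{cone}}^2}{c\,\tau}\Big),
\]
with the analogous weight in $r'$ for $\partial_{r'}$. So either the lemma must assume $\alpha_j\le 1$, or it must be replaced by this weighted version. In the latter case the Duhamel construction in Proposition~\ref{prop:conic-parametrix} has to be rechecked, since the extra factor $r^{1/\alpha_j-1}$ now multiplies the $r^{\beta-1}$ coming from $\nabla q_j$.

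Smaller repairs to the $\alpha_j\le1$ argument:
\begin{itemize}
\item The prefactor of the Bessel series is $(4\pi\alpha_j\tau)^{-1}$, since the case $\alpha_j=1$ must return $(4\pi\tau)^{-1}e^{-|x-y|^2/4\tau}$.
\item For $\alpha_j<1/2$ the injectivity radius at distance $r$ from the tip is $r\sin(\pi\alpha_j)$. Hence the ball of radius $1/8$ about a point with $r=1/4$ is not a flat disc once $\alpha_j\le 1/6$. Use a radius $c_{\alpha_j}$ instead, or lift to the universal cover of the punctured cone.
\item In the tip regime, the inequality you need is the upper bound $d_{\mathrm{cone}}\le r+r'\le|r-r'|+2r$, not the lower bound $d_{\mathrm{cone}}^2\ge(r-r')^2$. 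The upper bound is what converts $e^{-(r-r')^2/4}$, together with the $e^{O(r')}$ growth of the $k$-sum, into $Ce^{-d_{\mathrm{cone}}^2/c}$.
\item In Step~2, say explicitly that the Friedrichs form domain is all of $H^1(\mathsf{C}_{\alpha_j})$, because the tip has zero capacity in dimension two. Alternatively, polar coordinates identify the cone with $\mathbb{R}^2$ bi-Lipschitzly and with comparable measures, so Aronson's theorem yields \eqref{eq:cone-gaussian} directly.
\end{itemize}
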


\begin{remark}\label{rem:cone-kernel-refs}
Such bounds are standard for heat kernels on flat cones; they follow from the explicit Bessel expansion
and from general heat kernel estimates on conic manifolds (e.g.\ the parametrix calculus).
\end{remark}

\subsection{Conic parametrix for the base heat kernel with weighted remainders}\label{subsec:base-parametrix}

Let $H_B$ denote the Friedrichs realization of $-\Delta_{g_B}$ on $L^2(B_{\mathrm{reg}},d\mu_{g_B})$,
and let $K_B(\cdot,\cdot;\tau)$ be its heat kernel on $B_{\mathrm{reg}}\times B_{\mathrm{reg}}$.

Fix cutoffs $\chi_j,\widetilde\chi_j\in C_c^\infty(U_j)$ such that
\[
\chi_j\equiv 1 \ \text{on a neighborhood of }p_j,\qquad \widetilde\chi_j\equiv 1 \ \text{on }\mathrm{supp}\,\chi_j.
\]
We view $K^{\mathrm{cone}}_j$ as a kernel on $U_j\setminus\{p_j\}$ via the wedge coordinates.

\begin{proposition}[Local conic parametrix with $(r+r')^\beta$ remainder]\label{prop:conic-parametrix}
Assume Assumption~\ref{ass:conic-asymptotics}. A local conic heat parametrix with weighted remainders can be constructed by standard edge methods; 
see \cite{ConeParametrixRefs,MazzeoEdge,coneHeatKernelRefs}.
Then there exist $\tau_0\in(0,1]$ and kernels
$R_j(\cdot,\cdot;\tau)$ on $(U_j\setminus\{p_j\})\times (U_j\setminus\{p_j\})$ such that for all $0<\tau\le\tau_0$ and all
$b,b'\in U_j\setminus\{p_j\}$,
\begin{equation}\label{eq:KB-parametrix}
K_B(b,b';\tau)
=
\chi_j(b)\chi_j(b')\,K^{\mathrm{cone}}_j(b,b';\tau)
+
R_j(b,b';\tau),
\end{equation}
and the remainder satisfies the weighted bound
\begin{equation}\label{eq:Rj-bound}
|R_j(b,b';\tau)|
\le
\frac{C}{\tau}\,(r(b)+r(b'))^{\beta}\,
\exp\!\Big(-\frac{d_{g_B}(b,b')^2}{C\,\tau}\Big),
\qquad 0<\tau\le\tau_0,
\end{equation}
where $r(\cdot)$ is the wedge radial variable around $p_j$.
\end{proposition}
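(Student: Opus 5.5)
The plan is a Duhamel (Levi) parametrix argument. We take the localized cone kernel as the approximate solution and write its defect in divergence form, so that the remainder can be bounded using only first derivatives of the kernels. Set
\[
G_j(b,b';\tau):=\chi_j(b)\chi_j(b')\,K^{\mathrm{cone}}_j(b,b';\tau),\qquad R_j:=K_B-G_j .
\]
On the region where $\chi_j(b)\chi_j(b')\neq 1$, at least one of $b,b'$ lies at distance $\ge r_1>0$ from $p_j$. There $(r(b)+r(b'))^\beta\ge r_1^\beta$, so \eqref{eq:Rj-bound} reduces to a plain Gaussian upper bound for $K_B$ and $K^{\mathrm{cone}}_j$. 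For $K_B$ this Gaussian bound comes from Saloff-Coste/Grigor'yan theory: $g_B$ is bi-Lipschitz to the flat cone by Assumption~\ref{ass:conic-asymptotics}, so volume doubling and a scale-invariant Poincar\'e inequality hold uniformly. The real content is therefore the estimate of $K_B-K^{\mathrm{cone}}_j$ on $\{\chi_j(b)=\chi_j(b')=1\}$. There one should also account for the density factor $d\mu_{g_B}/d\mu_{\mathrm{cone}}=1+O(r^\beta)$ from \eqref{eq:measure-compare}, since kernels are taken with respect to different measures. This factor contributes directly an error of size $r^\beta\tau^{-1}e^{-d^2/c\tau}$, which is of the allowed form.

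Next compute $(\partial_\tau+H_B)G_j$ in the variable $b$. It consists of two kinds of terms. The first kind are commutator terms $[\Delta_{g_B},\chi_j]K^{\mathrm{cone}}_j$, supported in $\{r\ge r_1\}$. By Lemma~\ref{lem:cone-kernel-bounds} they are bounded by $C\tau^{-3/2}e^{-d^2/c\tau}$, and this is again harmless because $r$ is bounded below there. The second kind is the perturbation term $(\Delta_{\mathrm{cone}}-\Delta_{g_B})K^{\mathrm{cone}}_j$. I would not expand this into second derivatives. Instead I would write it as
\[
\Delta_{g_B}u-\Delta_{\mathrm{cone}}u=\operatorname{div}_{\mathrm{cone}}(Q\nabla^{\mathrm{cone}}u)+(\text{lower order}),
\]
where $|Q|\le Cr^\beta$ by \eqref{eq:C1-cone-control}. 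The lower-order term has coefficient $O(r^{\beta-1})$ and multiplies only first derivatives. Duhamel's formula then gives
\[
R_j(b,b';\tau)=\int_0^\tau\!\!\int \Big(\nabla_z K_B(b,z;\tau-s)\cdot Q(z)\nabla_z K^{\mathrm{cone}}_j(z,b';s)+\cdots\Big)\,d\mu(z)\,ds ,
\]
after integrating by parts in $z$ to move one derivative onto $K_B$, plus the commutator contributions.

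To estimate this remainder we need gradient bounds for $K_B$ of the form $|\nabla K_B|\le C\tau^{-3/2}e^{-d^2/c\tau}$, at least in a weighted $L^2$-in-$z$ sense. I would obtain them by rescaling dyadic annuli $\{r\sim 2^{-k}\}$ to unit size. The $C^1$ control \eqref{eq:C1-cone-control} is scale invariant, so on each rescaled annulus the metric is uniformly $C^1$-close to a fixed flat metric. Caccioppoli plus De Giorgi--Nash--Moser estimates then give the gradient bound. The weight $r(z)^\beta$ at the intermediate point is then converted using
\[
r(z)^\beta\le r(b)^\beta+d(b,z)^\beta .
\]
The factor $d(b,z)^\beta$ is absorbed by the Gaussian at the price of a gain $(\tau-s)^{\beta/2}$, and the Gaussian convolution (semigroup) property collapses the $z$-integral to $e^{-d(b,b')^2/C\tau}$. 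The time integral becomes
\[
\int_0^\tau (\tau-s)^{-1/2}s^{-1/2}\,\tau^{-1}\,ds\cdot(\ldots),
\]
which is finite. This produces \eqref{eq:Rj-bound} for $\tau\le\tau_0$.

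I expect the main obstacle to be the lower-order term carrying $r^{\beta-1}|\nabla K^{\mathrm{cone}}_j|$, together with justifying the integration by parts near the tip. Naively, $r^{\beta-1}$ spoils the $(r+r')^\beta$ weight. My first attempt would be to use the area element $r\,dr$ and split the $z$-integral into $\{r(z)<\sqrt{s}\}$ and $\{r(z)\ge\sqrt{s}\}$. On the first region the small volume $O(s)$ compensates the singular coefficient. On the second region, $r^{\beta-1}\le r^\beta/\sqrt{s}$, which costs only an integrable $s^{-1/2}$ in time. For the boundary terms at $r=\epsilon\to0$ in the integration by parts, I would use the Friedrichs domain: functions in it have bounded $r\partial_r$-energy, so these boundary terms vanish.
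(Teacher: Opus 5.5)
Your route is the same Duhamel/Levi parametrix the paper sketches: seed $\chi_j\otimes\chi_j\,K^{\mathrm{cone}}_j$, write the remainder as a Duhamel integral, then use Gaussian bounds. You write it out in more detail than the paper. But the step that is supposed to produce the weight does not produce it. When you split $r(z)^\beta\le r(b)^\beta+d(b,z)^\beta$ and absorb $d(b,z)^\beta$ into the Gaussian, that piece contributes
\[
\frac{1}{\tau}\int_0^\tau(\tau-s)^{\frac{\beta-1}{2}}s^{-\frac12}\,ds\; e^{-d_{g_B}(b,b')^2/C\tau}\ \simeq\ \tau^{\frac{\beta}{2}-1}e^{-d_{g_B}(b,b')^2/C\tau},
\]
with no factor $(r(b)+r(b'))^\beta$. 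Likewise, the commutator terms are small only because $r(z)\ge r_1$ at the \emph{intermediate} point. That gives an unweighted $O(e^{-c/\tau})$, not a factor $(r(b)+r(b'))^\beta$. So your argument proves
\[
|R_j|\le C\tau^{-1}\big(r(b)+r(b')+\sqrt\tau\big)^\beta e^{-d_{g_B}(b,b')^2/C\tau},
\]
which implies \eqref{eq:Rj-bound} only in the range $r(b)+r(b')\gtrsim\sqrt\tau$.

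No bookkeeping can close this, because \eqref{eq:Rj-bound} is false when $r(b)+r(b')\ll\sqrt\tau$. Near $p_j$ both metrics are bi-Lipschitz to the Euclidean disc and a point has zero capacity, so Nash--Moser makes $K_B(\cdot,\cdot;\tau)$ and $K^{\mathrm{cone}}_j(\cdot,\cdot;\tau)$ continuous at $p_j$. Letting $b=b'\to p_j$ in \eqref{eq:Rj-bound} would then force $K_B(p_j,p_j;\tau)=K^{\mathrm{cone}}_j(p_j,p_j;\tau)$ for all $\tau\le\tau_0$. This fails in general. Take $B$ compact with $g_B$ exactly conical near $p_j$, so $q_j\equiv 0$. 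The difference of the tip values is real-analytic in $\tau>0$ and tends to $1/\mathrm{vol}(B)$ as $\tau\to\infty$, so it is not identically zero on $(0,\tau_0]$. For a genuine perturbation $q_j$, the same scaling as in your $d(b,z)^\beta$ term makes the discrepancy typically of order $\tau^{\beta/2-1}$.

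The paper's sketch has the same hole. Its defect $\mathcal{E}$ is weighted at the intermediate point, and the part supported on $\nabla\widetilde\chi_j$ carries no endpoint weight at all. So what can actually be proved is the $(r(b)+r(b')+\sqrt\tau)^\beta$ bound above. Where it is used later (integrated over $\{r,r'<\rho\}$ at fixed $\tau$), it gives $O_\tau(\rho^4)$ instead of $O(\rho^{4+\beta})$.

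Two technical steps in your sketch also need repair.

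\emph{The $r^{\beta-1}$ lower-order term.} The replacement $r^{\beta-1}\le r^\beta/\sqrt s$ does not cost only an integrable factor. After the Gaussian convolution, $\nabla_z K^{\mathrm{cone}}_j(z,b';s)$ already contributes $s^{-1/2}$. Your replacement turns this into $s^{-1}$ against the surviving weight $r(b')^\beta$, which diverges at $s=0$; the bound is lossy exactly where $r(z)\approx r(b')\gg\sqrt s$. Keep $r(z)^{\beta-1}$ and use $\int s^{-1}e^{-d(z,b')^2/cs}\,r(z)^{\beta-1}\,d\mu(z)\lesssim(r(b')+\sqrt s)^{\beta-1}$. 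Its $s^{-1/2}$-weighted time integral is $\lesssim(r(b')+\sqrt\tau)^\beta$, which again exhibits the $\sqrt\tau$.

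\emph{Gradient bounds.} De Giorgi--Nash--Moser gives H\"older bounds, not gradient bounds. On the rescaled annuli you need Schauder (or $W^{1,p}$) estimates, using the scale-invariant Lipschitz control of the coefficients. At the tip, a pointwise bound $|\nabla K|\lesssim\tau^{-3/2}$ is false when $\alpha_j>1$: the first angular mode of $K^{\mathrm{cone}}_j$ behaves like $r^{1/\alpha_j}$, so its gradient blows up like $r^{1/\alpha_j-1}$. This also contradicts the derivative bound in Lemma~\ref{lem:cone-kernel-bounds}. Your $L^2$-in-$z$ fallback is therefore necessary, not optional.
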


\begin{proof}[Proof sketch]
Let $K^{(0)}:=\widetilde\chi_j\otimes\widetilde\chi_j\,K^{\mathrm{cone}}_j$.
Using \eqref{eq:conic-metric-expansion}--\eqref{eq:C1-cone-control}, the operator difference
$H_B-H^{\mathrm{cone}}_j$ has coefficients of size $O(r^\beta)$ and first derivatives of size $O(r^{\beta-1})$
in the wedge coordinates. A Duhamel parametrix construction yields
\[
(\partial_\tau+H_B)K^{(0)}=\delta + \mathcal{E},
\]
where $\mathcal{E}$ is supported where $\nabla\widetilde\chi_j\neq 0$ or where $q_j\neq 0$,
and satisfies Gaussian domination with a prefactor $(r+r')^\beta\tau^{-1}$ thanks to
\eqref{eq:cone-gaussian}--\eqref{eq:cone-derivative} and \eqref{eq:C1-cone-control}.
Setting
\[
R_j:=\int_0^\tau e^{-(\tau-s)H_B}\,\mathcal{E}(s)\,ds
\]
and using Gaussian bounds for $e^{-tH_B}$ on $B_{\mathrm{reg}}$ gives \eqref{eq:Rj-bound}.
\end{proof}

\subsection{Definition of the renormalized bilinear functional}\label{subsec:ren-def}

Let $\tau>0$ and let $\Phi,\Psi\in C_c^\infty(B)$ be smooth test functions, not assumed to avoid $D$.
(We identify $\Phi,\Psi$ with their restrictions to $B_{\mathrm{reg}}$ whenever needed.)

\paragraph{Radial cutoff at scale $\rho$.}
Fix a smooth function $\eta\in C^\infty([0,\infty))$ with $\eta\equiv 0$ on $[0,1/2]$ and $\eta\equiv 1$ on $[1,\infty)$.
For $\rho\in(0,r_0)$ define $\eta_\rho(r):=\eta(r/\rho)$.
Using wedge charts near each $p_j$, define a global cutoff $\eta_\rho\in C^\infty(B)$ by
\[
\eta_\rho(b):=
1-\sum_{j=1}^N (1-\eta_\rho(r_j(b)))\,\chi_j(b),
\]
where $r_j(\cdot)$ is the wedge radial coordinate near $p_j$ and $\chi_j$ are the cutoffs in the parametrix.
Since $D$ is finite, the sum is finite. For $\rho$ small enough $\eta_\rho$ satisfies
\[
\eta_\rho\equiv 0 \text{ on } \bigcup_j\{r_j\le \rho/2\},\qquad \eta_\rho\equiv 1 \text{ on } B\setminus \bigcup_j\{r_j\le \rho\}.
\]

Define
\begin{equation}\label{eq:Phi-split}
\Phi^{(\rho)}:=\eta_\rho\,\Phi,\qquad \Phi^{<\rho}:=\Phi-\Phi^{(\rho)},
\end{equation}
and similarly for $\Psi$.
Thus $\Phi^{(\rho)},\Psi^{(\rho)}$ vanish in a neighborhood of $D$, while $\Phi^{<\rho},\Psi^{<\rho}$
are supported in a union of wedge neighborhoods.

\begin{definition}[Renormalized bilinear functional]\label{def:Kren}
For $\tau>0$ and $\Phi,\Psi\in C_c^\infty(B)$ define
\begin{align}
K_B^{\mathrm{ren}}(\Phi,\Psi;\tau)
:=
\lim_{\rho\downarrow 0}\Bigg[
&\int_{B_{\mathrm{reg}}\times B_{\mathrm{reg}}}
K_B(b,b';\tau)\,\Phi^{(\rho)}(b)\,\Psi^{(\rho)}(b')\,d\mu_{g_B}(b)\,d\mu_{g_B}(b')
\label{eq:Kren-def}\\
&\quad
+\sum_{j=1}^N
\int_{U_j\times U_j}
\chi_j(b)\chi_j(b')\,K_j^{\mathrm{cone}}(b,b';\tau)\,
\Phi^{<\rho}(b)\,\Psi^{<\rho}(b')\,d\mu_{g^{\mathrm{cone}}_j}(b)\,d\mu_{g^{\mathrm{cone}}_j}(b')
\Bigg].\nonumber
\end{align}
\end{definition}

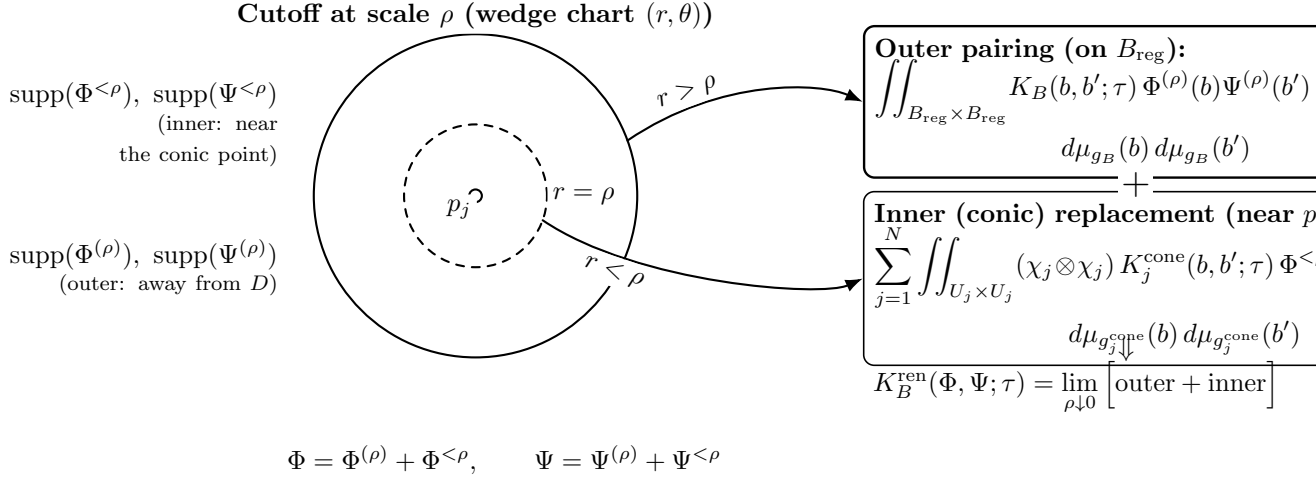
\begin{figure}[t]
\centering

\begin{tikzpicture}[
  >=Latex,
  font=\small,
  line cap=round,
  line join=round,
  box/.style={draw, rounded corners, align=left, text width=6.95cm, inner sep=4pt},
  boxouter/.style={box, line width=0.9pt},
  boxinner/.style={box, line width=0.6pt},
  lab/.style={fill=white, rounded corners=1pt, inner sep=2pt}
]

\def\Rbig{2.15cm}   
\def\Rrho{0.95cm}   
\coordinate (pj) at (0,0);

\draw[thick] (pj) circle (\Rbig);
\draw[thick, dashed] (pj) circle (\Rrho);

\draw[thick, fill=white] (pj) circle (2.4pt);
\node[lab, below left=-1pt] at (pj) {$p_j$};

\node[lab, anchor=west] at ($(pj)+(0:\Rrho)$) {$r=\rho$};

\node[lab, anchor=south] at ($(pj)+(0,\Rbig)$)
{\textbf{Cutoff at scale $\rho$ (wedge chart $(r,\theta)$)}};

\node[lab, anchor=east, align=right, text width=3.75cm] (innerlab)
at (-2.55cm,0.95cm)
{\(\supp(\Phi^{<\rho}),\ \supp(\Psi^{<\rho})\)\\[-0.6mm]\scriptsize(inner: near the conic point)};

\node[lab, anchor=east, align=right, text width=3.75cm] (outerlab)
at (-2.55cm,-0.95cm)
{\(\supp(\Phi^{(\rho)}),\ \supp(\Psi^{(\rho)})\)\\[-0.6mm]\scriptsize(outer: away from \(D\))};

\coordinate (outerstart) at ($(pj)+(20:\Rbig)$);
\coordinate (innerstart) at ($(pj)+(-20:\Rrho)$);

\node[boxouter, anchor=west] (outerbox) at (5.15cm,1.25cm)
{$\textbf{Outer pairing (on }B_{\mathrm{reg}}\textbf{):}$\\[-1mm]
\(\begin{aligned}
\iint_{B_{\mathrm{reg}}\times B_{\mathrm{reg}}}
&K_B(b,b';\tau)\,\Phi^{(\rho)}(b)\Psi^{(\rho)}(b')\\
&\qquad d\mu_{g_B}(b)\,d\mu_{g_B}(b')
\end{aligned}\)};

\node[boxinner, anchor=west] (innerbox) at (5.15cm,-1.10cm)
{$\textbf{Inner (conic) replacement (near }p_j\textbf{):}$\\[-1mm]
\(\begin{aligned}
\sum_{j=1}^N \iint_{U_j\times U_j}
&(\chi_j\!\otimes\!\chi_j)\,K^{\mathrm{cone}}_j(b,b';\tau)\,
\Phi^{<\rho}(b)\Psi^{<\rho}(b')\\
&\qquad d\mu_{g^{\mathrm{cone}}_j}(b)\,d\mu_{g^{\mathrm{cone}}_j}(b')
\end{aligned}\)};

\node[fill=white, inner sep=1pt] at ($(outerbox.south)!0.5!(innerbox.north)$) {\Large $+$};

\draw[->, thick]
  (outerstart)
  .. controls +(1.25cm,0.85cm) and +(-0.55cm,0.25cm) ..
  (outerbox.west)
  node[pos=0.23, above, sloped, lab] {$r>\rho$};

\draw[->, thick]
  (innerstart)
  .. controls +(1.25cm,-0.85cm) and +(-0.55cm,-0.25cm) ..
  (innerbox.west)
  node[pos=0.23, below, sloped, lab] {$r<\rho$};

\node at (8.65cm,-2.05cm) {$\Downarrow$};

\node[anchor=west] at (5.15cm,-2.55cm)
{$K_B^{\mathrm{ren}}(\Phi,\Psi;\tau)
=\displaystyle\lim_{\rho\downarrow0}\Big[\text{outer}+\text{inner}\Big]$};

\end{tikzpicture}

\vspace{0.35em}
{\small \(\Phi=\Phi^{(\rho)}+\Phi^{<\rho},\qquad \Psi=\Psi^{(\rho)}+\Psi^{<\rho}\)}

\caption{$\rho$--cutoff decomposition and the gluing mechanism in the definition of the renormalized functional $K_B^{\mathrm{ren}}$.
The test functions split into an outer part supported in $B_{\mathrm{reg}}$ and an inner part supported near the conic points.
The outer contribution uses the base heat kernel $K_B$, while the inner contribution is replaced by the model cone kernels $K_j^{\mathrm{cone}}$.
Taking $\rho\downarrow0$ yields a canonical cutoff-independent limit (Theorem~5.7).}
\label{fig:rho-cutoff-renorm}
\end{figure}

At this stage one must show that the limit exists and does not depend on \emph{any} auxiliary choice
(wedge charts, cutoffs $\chi_j$, or the cutoff profile $\eta$). This is the key intrinsic part of the construction.

\begin{theorem}[Existence and cutoff-independence]\label{thm:Kren-well-defined}
Assume Assumption~\ref{ass:conic-asymptotics} and the conic parametrix Proposition~\ref{prop:conic-parametrix}.
Then for every $\tau>0$ and every $\Phi,\Psi\in C_c^\infty(B)$, the limit in \eqref{eq:Kren-def} exists and is finite.

Moreover, $K_B^{\mathrm{ren}}(\Phi,\Psi;\tau)$ is independent of 
(i) the wedge charts $(r_j,\theta_j)$, 
(ii) the choice of cutoffs $\chi_j,\widetilde\chi_j$ used in the parametrix, and 
(iii) the cutoff profile $\eta$ used to build $\eta_\rho$.

Finally, if $\mathrm{supp}\,\Phi\cup \mathrm{supp}\,\Psi\subset B_{\mathrm{reg}}$, then for $\rho$ sufficiently small
$\Phi^{<\rho}=\Psi^{<\rho}=0$, hence
\[
K_B^{\mathrm{ren}}(\Phi,\Psi;\tau)=\int_{B_{\mathrm{reg}}\times B_{\mathrm{reg}}}
K_B(b,b';\tau)\,\Phi(b)\,\Psi(b')\,d\mu_{g_B}(b)\,d\mu_{g_B}(b').
\]
\end{theorem}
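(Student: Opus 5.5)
The plan is to write the bracket in \eqref{eq:Kren-def} as a function $F(\rho)$ and show it is Cauchy as $\rho\downarrow 0$. I would first treat $0<\tau\le\tau_0$ and then extend to all $\tau>0$ via the semigroup property.

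\textbf{Step 1: decomposition.} Expand $\Phi=\Phi^{(\rho)}+\Phi^{<\rho}$ in the full pairing $\iint K_B\,\Phi\otimes\Psi$. This integral is finite, because $\Phi,\Psi$ are bounded with compact support and $K_B(\cdot,\cdot;\tau)$ is integrable by the Gaussian bound, the point set $D$ having measure zero. The expansion gives
\[
F(\rho)=\langle\Phi,T_B(\tau)\Psi\rangle-\big[\langle\Phi^{(\rho)},T_B\Psi^{<\rho}\rangle+\langle\Phi^{<\rho},T_B\Psi\rangle\big]+\Big[\sum_j\iint\chi_j\otimes\chi_j K^{\mathrm{cone}}_j\Phi^{<\rho}\Psi^{<\rho}\,d\mu_{\mathrm{cone}}^{\otimes2}\Big].
\]
Each term containing a factor $\Phi^{<\rho}$ or $\Psi^{<\rho}$ is supported in $\bigcup_j\{r_j<\rho\}$, a set of measure $O(\rho^2)$.

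\textbf{Step 2: bounds on the $\rho$-dependent terms.} For the cross terms, I would use $\|\Phi^{<\rho}\|_{L^1}\le C\rho^2$ together with $\sup|K_B|\le C/\tau$. For the latter I would invoke \eqref{eq:KB-parametrix}, \eqref{eq:cone-gaussian} and \eqref{eq:Rj-bound}. This gives $|\langle\Phi^{<\rho},T_B\Psi\rangle|\le C\tau^{-1}\rho^2\|\Psi\|_{L^1}$. The cone term is handled in the same way: by \eqref{eq:measure-compare} it is $O(\tau^{-1}\rho^4)$. Hence $F(\rho)\to\langle\Phi,T_B(\tau)\Psi\rangle$, and in particular the limit exists and is finite. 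To make the structure visible, I would additionally split $K_B=\chi\otimes\chi K^{\mathrm{cone}}+R_j$ on the inner-inner block. The inner-inner $K_B$ contribution and the cone replacement then differ only by the $R_j$ term and the measure discrepancy \eqref{eq:measure-compare}. By \eqref{eq:Rj-bound}, the $R_j$ term is $O(\rho^\beta\cdot\rho^4/\tau)$, and the measure discrepancy is $O(\rho^{\beta})$ relative. This shows the renormalized quantity is intrinsic even before the crude estimate is used.

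\textbf{Step 3: independence of choices.} Since the limit equals $\langle\Phi,T_B(\tau)\Psi\rangle$, which involves none of $\eta$, $\chi_j$, $\widetilde\chi_j$ or the wedge charts, independence of all choices (i)--(iii) follows at once. If one prefers a proof that does not rely on the crude bound, one can instead compare two choices directly. The difference of the two brackets is supported in $\{r<\max(\rho,\rho')\}$ on at least one side, and Step 2 bounds it by $O(\rho^2)$. Changing wedge charts changes $r$ only bi-Lipschitz, which does not affect the $O(\rho^2)$ support estimate. For $\tau>\tau_0$, I would write $T_B(\tau)=T_B(\tau_0)T_B(\tau-\tau_0)$. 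Then $\sup K_B(\cdot,\cdot;\tau)\le C$ follows from the $L^1\to L^\infty$ bound at time $\tau_0$ combined with $L^\infty$-contractivity, and this suffices for Step 2. The final assertion is immediate: if the supports avoid $D$, then for $\rho$ small $\eta_\rho\equiv1$ on them, so $\Phi^{<\rho}=\Psi^{<\rho}=0$.

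\textbf{Main obstacle.} The main difficulty is a uniform $L^\infty$ bound on $K_B$ near $D$ for all $\tau$. This bound is what makes the inner contributions vanish like $\rho^2$. I would obtain it from the parametrix for small times together with the semigroup/ultracontractivity argument for larger times, as sketched above. I would check that the Gaussian bound \eqref{eq:Rj-bound} does not degenerate as $r,r'\to0$; the factor $(r+r')^\beta\le C$ ensures this.
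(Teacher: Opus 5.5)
Your proof is correct, but it takes a different route from the paper.

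\textbf{The paper's route.} The paper uses a Cauchy estimate. It subtracts the brackets at $\rho'<\rho$ and inserts $K_B=\chi_j\otimes\chi_j\,K^{\mathrm{cone}}_j+R_j$ on $U_j\times U_j$. It then lets the cone pieces ``telescope'' and bounds what remains by $\int_{0<r,r'<\rho}(r+r')^\beta\,r\,dr\,r'\,dr'\lesssim\rho^{4+\beta}$. Independence of (i)--(iii) is argued by saying that changing charts or cutoffs alters the parametrix only by kernels obeying the same bound as $R_j$.

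\textbf{Your route.} You identify the limit outright. Since $\Phi,\Psi\in L^2(B_{\mathrm{reg}})$, the bare pairing $\langle\Phi,T_B(\tau)\Psi\rangle$ is finite. After expanding the outer--outer term around it, every remaining term carries a factor supported in $\bigcup_j\{r_j<\rho\}$ and vanishes as $\rho\downarrow0$. So $K_B^{\mathrm{ren}}(\Phi,\Psi;\tau)=\langle\Phi,T_B(\tau)\Psi\rangle$, and (i)--(iii) are immediate without comparing parametrices.

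\textbf{What your route buys.}
\begin{itemize}
\item It covers every $\tau>0$. The paper's use of \eqref{eq:Rj-bound} is only licensed for $\tau\le\tau_0$, and its proof does not address larger $\tau$.
\item It keeps track of the outer--inner cross terms $\langle\Phi^{(\rho)},T_B\Psi^{<\rho}\rangle$ and $\langle\Phi^{<\rho},T_B\Psi\rangle$. These neither telescope nor lie in $\{r,r'<\rho\}$, so the paper's integral over $\{0<r,r'<\rho\}$ does not control them.
\item Your decomposition shows these cross terms are generically of exact order $\rho^2$, heuristically $\Phi(p_j)\,(T_B\Psi)(p_j)\int(1-\eta_\rho)\,d\mu_{g_B}$. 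So the bracket converges at rate $\rho^2$, not the $\rho^{4+\beta}$ claimed there and reused in Proposition~\ref{prop:edge-rho}.
\end{itemize}
For the qualitative statement, your identification with the unrenormalized pairing is simpler and more robust. It also shows that the renormalization does not change the value.

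\textbf{A simplification.} The ``main obstacle'' you name is not actually needed, and your route to it has a small hole. Proposition~\ref{prop:conic-parametrix} controls $K_B$ only on $U_j\times U_j$. So bounding $\langle\Phi^{<\rho},T_B\Psi\rangle$ via $\sup|K_B|\le C/\tau$ when $\supp\Psi\not\subset U_j$ needs extra input. That could be a global Gaussian bound, or $K_B(b,b';\tau)\le K_B(b,b;\tau)^{1/2}K_B(b',b';\tau)^{1/2}$ combined with local diagonal bounds.

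You can avoid kernel bounds altogether:
\begin{itemize}
\item $L^2$-contractivity gives $|\langle\Phi^{<\rho},T_B(\tau)\Psi\rangle|\le\|\Phi^{<\rho}\|_{L^2}\|\Psi\|_{L^2}=O(\rho)$ for all $\tau>0$.
\item The sub-Markov property $\|T_B(\tau)\Psi\|_{L^\infty}\le\|\Psi\|_{L^\infty}$ improves this to $O(\rho^2)$.
\item The cone term is $O(\rho^2)$ by contractivity of $e^{-\tau H^{\mathrm{cone}}_j}$.
\end{itemize}
This makes your separate semigroup argument for $\tau>\tau_0$ unnecessary.
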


\begin{proof}
Fix $0<\rho'<\rho<r_0$.
Subtract the bracketed expressions in \eqref{eq:Kren-def} at $\rho$ and $\rho'$.
The difference is supported where at least one variable lies in an annulus $\{\rho'/2\le r_j\le \rho\}$ for some $j$.

On each $U_j\times U_j$ we insert the decomposition \eqref{eq:KB-parametrix}.
The conic pieces telescope thanks to the way $\Phi^{(\rho)}$ and $\Phi^{<\rho}$ partition $\Phi$ (and similarly for $\Psi$).
What remains are integrals of the remainders $R_j$ and the measure-comparison errors
$d\mu_{g_B}-d\mu_{g^{\mathrm{cone}}_j}$ against $\Phi,\Psi$ supported in $\{r_j<\rho\}$.
Using \eqref{eq:Rj-bound} and \eqref{eq:measure-compare}, and integrating against $d\mu_{g^{\mathrm{cone}}_j}^{\otimes 2}\sim (r\,dr\,d\theta)(r'\,dr'\,d\theta')$, we obtain
\[
\Big|\Delta_{\rho,\rho'}\Big|
\le
C(\tau)\,\|\Phi\|_{L^\infty}\|\Psi\|_{L^\infty}
\int_{0<r,r'<\rho} (r+r')^\beta \,(r\,dr)(r'\,dr')
\ \le\ C(\tau)\,\rho^{4+\beta}.
\]
Hence the net is Cauchy as $\rho\downarrow 0$, so the limit exists.

Independence of auxiliary choices follows from the same telescoping estimate:
changing charts/cutoffs changes the parametrix only by kernels satisfying the same bound as $R_j$,
hence the difference between two constructions is $O(\rho^{4+\beta})\to 0$.
If supports avoid $D$, then for $\rho$ below the distance to $D$ we have $\Phi^{<\rho}=\Psi^{<\rho}=0$,
and \eqref{eq:Kren-def} collapses to the bare pairing.
\end{proof}

\subsection{Identification of the total-space bilinear limit with $K_B^{\mathrm{ren}}$}\label{subsec:identify-total}

We now connect the base renormalized functional to the collapsed total space.
Let $(X_t,g(t))$ be the collapsing family as in Sections~\ref{sec:mosco}--\ref{sec:semigroup-kernel}, with elliptic torus
fibration $\Pi_t:X_t\to B_{\mathrm{reg}}$ on the regular locus, and let $K_t(\cdot,\cdot;\tau)$ be the heat kernel on $X_t$.
We consider the \emph{fiber-constant compressed bilinear pairing}
\begin{equation}\label{eq:total-bilinear}
\mathcal{K}_t(\Phi,\Psi;\tau)
:=\langle I_t\Phi, e^{-\tau H_t} I_t\Psi\rangle_{L^2_t},
\qquad \tau>0,
\end{equation}
where $I_t$ is the normalized lift from Section~\ref{sec:mosco} (on the regular locus) and $H_t$ is the Friedrichs generator of $E_t$.

\begin{assumption}[Interior collapse input away from $D$]\label{ass:interior-away-D}
For every $\rho>0$, define $B(\rho):=B\setminus\bigcup_j\{r_j\le \rho\}$, so $B(\rho)\Subset B_{\mathrm{reg}}$.
Assume that on $B(\rho)$ the hypotheses of Sections~\ref{sec:mosco}--\ref{sec:semigroup-kernel} hold uniformly:
semi-flat $C^1$ control, vertical spectral gap, and hence
\begin{equation}\label{eq:interior-conv-rho}
\langle I_t \phi, e^{-\tau H_t} I_t\psi\rangle_{L^2(\Pi_t^{-1}(B(\rho)))}
\longrightarrow
\int_{B(\rho)\times B(\rho)} K_B(b,b';\tau)\,\phi(b)\,\psi(b')\,d\mu_{g_B}(b)\,d\mu_{g_B}(b')
\end{equation}
for all $\phi,\psi\in C_c^\infty(B(\rho))$ and all $\tau>0$.
\end{assumption}

\begin{theorem}[Renormalized bilinear limit across $D$]\label{thm:total-to-Kren}
Assume Assumption~\ref{ass:interior-away-D} and the conic hypotheses of this section
(Assumption~\ref{ass:conic-asymptotics} and Proposition~\ref{prop:conic-parametrix}).
Then for every fixed $\tau>0$ and every $\Phi,\Psi\in C_c^\infty(B)$,
\begin{equation}\label{eq:total-limit-Kren}
\lim_{t\downarrow 0}\mathcal{K}_t(\Phi,\Psi;\tau)=K_B^{\mathrm{ren}}(\Phi,\Psi;\tau).
\end{equation}
Moreover, the convergence is uniform for $\tau$ in compact subsets of $(0,\infty)$.
\end{theorem}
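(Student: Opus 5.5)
The plan is to split the test functions at scale $\rho$, take the limit $t\downarrow 0$ at fixed $\rho$, and only then let $\rho\downarrow 0$; this is the iterated-limit strategy already suggested in the introduction. Fix $\tau$ in a compact interval $J\subset(0,\infty)$ and $\rho\in(0,r_0)$, and write $\Phi=\Phi^{(\rho)}+\Phi^{<\rho}$ and $\Psi=\Psi^{(\rho)}+\Psi^{<\rho}$ as in \eqref{eq:Phi-split}. Bilinearity of $\mathcal{K}_t$ gives
\[
\mathcal{K}_t(\Phi,\Psi;\tau)=\mathcal{K}_t(\Phi^{(\rho)},\Psi^{(\rho)};\tau)+\mathcal{K}_t(\Phi^{(\rho)},\Psi^{<\rho};\tau)+\mathcal{K}_t(\Phi^{<\rho},\Psi^{(\rho)};\tau)+\mathcal{K}_t(\Phi^{<\rho},\Psi^{<\rho};\tau).
\]
The supports of $\Phi^{(\rho)}$ and $\Psi^{(\rho)}$ lie in $B(\rho/2)\Subset B_{\mathrm{reg}}$.

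\textbf{Outer term.} Assumption~\ref{ass:interior-away-D}, applied on $B(\rho/4)$, gives
\[
\mathcal{K}_t(\Phi^{(\rho)},\Psi^{(\rho)};\tau)\to \iint K_B\,\Phi^{(\rho)}\Psi^{(\rho)}.
\]
This assumption is stated on the truncated domain $\Pi_t^{-1}(B(\rho/4))$, not on the whole of $X_t$. To pass between the two, I would use the exhaustion and Davies--Gaffney decoupling from step (ii) of the introduction. For data supported at distance $\gtrsim\rho$ from the artificial boundary, the difference between the truncated and global semigroups is $O(e^{-c\rho^2/\tau})$ uniformly in $t$. At this stage that error is absorbed by first enlarging the domain at a fixed support, and the quantitative bookkeeping is deferred. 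Uniformity in $\tau\in J$ follows from the uniform version of Theorem~\ref{thm:semigroup}.

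\textbf{Terms involving $\Phi^{<\rho}$ or $\Psi^{<\rho}$: the main obstacle.} Here Assumption~\ref{ass:interior-away-D} gives no information, because the lifts $I_t\Phi^{<\rho}$ live over the region near $D$, where no semi-flat control is available. My approach is not to identify their limits but to show they are uniformly small. Contractivity of $e^{-\tau H_t}$ and the isometry of $I_t$ (Lemma~\ref{lem:L2-nearisometry}) give
\[
|\mathcal{K}_t(\Phi^{<\rho},\cdot\,;\tau)|\le \|\Phi^{<\rho}\|_{L^2(B)}\,\|\cdot\|_{L^2(B)}.
\]
Moreover $\|\Phi^{<\rho}\|_{L^2}\le C\|\Phi\|_\infty\,\rho$, since the support has area $O(\rho^2)$ by \eqref{eq:measure-compare}. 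Hence the three mixed and inner terms are $O(\rho)$ uniformly in $t$ and $\tau$. This only requires that the isometry property of $I_t$ extend over the small neighbourhoods of $D$, which I will assume: it holds by construction once $A_t$ is defined, by the same disintegration. If this extension fails, I would instead cap the total space at $\Pi_t^{-1}(B(\rho/4))$ and use Davies--Gaffney, but the $L^2$ bound is the route I would try first.

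\textbf{Matching the renormalized side and concluding.} On the limit side, the same Cauchy--Schwarz bound applied to $K_B$, and to $\chi_j\otimes\chi_j K^{\mathrm{cone}}_j$ via Lemma~\ref{lem:cone-kernel-bounds} (both define operators bounded on $L^2$ uniformly for $\tau\in J$), shows that
\[
\Big|\text{bracket in }\eqref{eq:Kren-def} - \iint K_B\,\Phi^{(\rho)}\Psi^{(\rho)}\Big|\le C\rho.
\]
Theorem~\ref{thm:Kren-well-defined} shows that this bracket converges to $K_B^{\mathrm{ren}}$, with Cauchy rate $O(\rho^{4+\beta})$. Combining these,
\[
\limsup_{t\downarrow0}\sup_{\tau\in J}\bigl|\mathcal{K}_t(\Phi,\Psi;\tau)-K_B^{\mathrm{ren}}(\Phi,\Psi;\tau)\bigr|\le C\rho + o_{\rho\to0}(1).
\]
Since $\rho$ is arbitrary, letting $\rho\downarrow0$ gives \eqref{eq:total-limit-Kren}, uniformly on $J$. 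The delicate point is the uniform smallness of the inner contributions in $t$; it rests on the $L^2$ isometry of $I_t$ near $D$, together with the Davies--Gaffney boundary decoupling.
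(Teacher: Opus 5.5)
Your argument is correct under the two conventions the paper itself uses: $I_t$ is an $L^2$ isometry over all of $B_{\mathrm{reg}}$, and Assumption~\ref{ass:interior-away-D} concerns the same $H_t$ and $K_B$ as the theorem. However, it treats the inner--inner term by a different and simpler route.

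The paper's Step~3 inserts the conic parametrix of Proposition~\ref{prop:conic-parametrix}. It asserts that $\lim_{t\downarrow0}\mathcal K_t(\Phi^{<\rho},\Psi^{<\rho};\tau)$ equals the cone pairing $\sum_j\iint\chi_j\otimes\chi_j\,K_j^{\mathrm{cone}}\,\Phi^{<\rho}\otimes\Psi^{<\rho}$ up to $O(\rho^{4+\beta})$, after ``combining with interior convergence on $B(\rho')$ and letting $\rho'\downarrow0$''.

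You never identify that limit. The contractivity/isometry/area bound the paper uses only for the mixed terms shows that the inner--inner total-space term is $O(\rho^2)$ uniformly in $t$ and $\tau$. The cone term in the bracket of \eqref{eq:Kren-def} is likewise $O(\rho^2)$, because $e^{-\tau H_j^{\mathrm{cone}}}$ is an $L^2$ contraction. That is cleaner than going through the cone Gaussian bounds, which are only stated for $\tau\le\tau_0$.

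Your route buys three things:
\begin{itemize}
\item The parametrix plays no role in the convergence, so its restriction to $\tau\le\tau_0$ is irrelevant.
\item You never let $t\downarrow0$ on data supported near $D$. The paper's interchange of $t\downarrow0$ and $\rho'\downarrow0$ tacitly requires exactly your uniform $L^2$-smallness.
\item Since $\|\Phi-\Phi^{(\rho)}\|_{L^2}=O(\rho)$, your bounds show directly that the bracket converges to $\langle\Phi,e^{-\tau H_B}\Psi\rangle_{L^2(B_{\mathrm{reg}})}$. So you need neither the existence claim nor the $O(\rho^{4+\beta})$ rate of Theorem~\ref{thm:Kren-well-defined}.
\end{itemize}
In particular, your argument shows that $K_B^{\mathrm{ren}}$ coincides with the ordinary base pairing; the cone replacement contributes only $O(\rho^2)$. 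The paper's route aims at a finer description of the inner--inner term, but that precision is swamped by the $O(\rho)$ mixed terms in the final estimate anyway.

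One piece of your write-up should be deleted rather than kept: the Davies--Gaffney passage between $\Pi_t^{-1}(B(\rho/4))$ and $X_t$ in the outer term. As written it does not work. At fixed $\tau$ the error $e^{-c\rho^2/\tau}$ is not small. Shrinking the excised discs $\{r_j\le\rho'\}$ never moves the artificial boundary farther than about $\rho/2$ from $\mathrm{supp}\,\Phi^{(\rho)}$, so the error does not go away. A truncated-to-global comparison uniform in $t$ would need control of $g(t)$ near $D$, which is not assumed.

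It is also unnecessary. Assumption~\ref{ass:interior-away-D} is written with the same $H_t$ as \eqref{eq:total-bilinear}, and $I_t\phi$ is supported in $\Pi_t^{-1}(B(\rho))$. The paper's Step~1 applies the assumption directly, with no truncation argument, and you should do the same.

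Similarly, for uniformity on a compact $J\subset(0,\infty)$ you need not appeal to the truncated Theorem~\ref{thm:semigroup}. Spectral calculus gives $|\mathcal K_t(\Phi,\Psi;\tau)-\mathcal K_t(\Phi,\Psi;\tau')|\le|\tau-\tau'|\,\|\Phi\|_{L^2}\|\Psi\|_{L^2}/(e\min J)$ uniformly in $t$. So pointwise convergence upgrades to uniform convergence on $J$, and your $O(\rho)$ bounds are already uniform in $\tau$.
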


\begin{proof}
Fix $\tau>0$ and split $\Phi,\Psi$ as in \eqref{eq:Phi-split} using a cutoff scale $\rho\in(0,r_0)$:
\[
\Phi=\Phi^{(\rho)}+\Phi^{<\rho},\qquad \Psi=\Psi^{(\rho)}+\Psi^{<\rho}.
\]
Expand $\mathcal{K}_t(\Phi,\Psi;\tau)$ into four terms.

\emph{Step 1: the outer--outer term.}
Since $\Phi^{(\rho)},\Psi^{(\rho)}$ are supported in $B(\rho/2)\Subset B_{\mathrm{reg}}$,
Assumption~\ref{ass:interior-away-D} gives
\[
\lim_{t\downarrow 0}\mathcal{K}_t(\Phi^{(\rho)},\Psi^{(\rho)};\tau)
=
\int_{B(\rho/2)\times B(\rho/2)}
K_B\,\Phi^{(\rho)}\otimes \Psi^{(\rho)}\,d\mu_{g_B}^{\otimes 2}.
\]

\emph{Step 2: mixed terms vanish as $\rho\downarrow 0$.}
Using selfadjointness/contractivity of $e^{-\tau H_t}$ and the $L^2$-isometry of $I_t$,
\[
\big|\mathcal{K}_t(\Phi^{(\rho)},\Psi^{<\rho};\tau)\big|
\le \|I_t\Phi^{(\rho)}\|_{L^2_t}\,\|I_t\Psi^{<\rho}\|_{L^2_t}
= \|\Phi^{(\rho)}\|_{L^2(B)}\,\|\Psi^{<\rho}\|_{L^2(B)}.
\]
Since $\Psi^{<\rho}$ is supported in $\bigcup_j\{r_j<\rho\}$ and $\Psi$ is smooth with compact support,
we have $\|\Psi^{<\rho}\|_{L^2(B)}\to 0$ as $\rho\downarrow 0$, hence the mixed term tends to $0$
uniformly in $t$. The same estimate applies to $\mathcal{K}_t(\Phi^{<\rho},\Psi^{(\rho)};\tau)$.

\emph{Step 3: the inner--inner term is captured by the cone model up to an error vanishing with $\rho$.}
Insert a partition of unity $\sum_j\chi_j$ on $\mathrm{supp}(\Phi^{<\rho})\cup\mathrm{supp}(\Psi^{<\rho})$ and work wedgewise.
On each $U_j$ compare the base kernel $K_B$ with the model cone kernel via Proposition~\ref{prop:conic-parametrix}.
The remainder estimate \eqref{eq:Rj-bound} and the measure comparison \eqref{eq:measure-compare} show that the error made by
replacing $K_B$ with $\chi_j\otimes\chi_j\,K_j^{\mathrm{cone}}$ in the inner--inner pairing is $O_\tau(\rho^{4+\beta})$.
Combining with interior convergence on truncated sets $B(\rho')$ and then letting $\rho'\downarrow 0$ yields
\[
\lim_{t\downarrow 0}\mathcal{K}_t(\Phi^{<\rho},\Psi^{<\rho};\tau)
=
\sum_{j=1}^N
\int_{U_j\times U_j}
\chi_j\otimes\chi_j\,K_j^{\mathrm{cone}}\,\Phi^{<\rho}\otimes\Psi^{<\rho}\,d\mu_{g^{\mathrm{cone}}_j}^{\otimes 2}
\;+\;O_\tau(\rho^{4+\beta}).
\]

\emph{Step 4: send $\rho\downarrow 0$.}
Combining Steps 1--3 and using the definition \eqref{eq:Kren-def} plus Theorem~\ref{thm:Kren-well-defined}
yields \eqref{eq:total-limit-Kren}.
\end{proof}

\subsection{Section summary}\label{subsec:sec5-summary}
We have established the following: 
(i) wedge charts and $C^1$ conic control of $g_B$ near $D$; 
(ii) a conic parametrix for $K_B$ with $(r+r')^\beta$-weighted remainder bounds; 
(iii) the renormalized bilinear functional $K_B^{\mathrm{ren}}(\Phi,\Psi;\tau)$ together with its cutoff-independence; and 
(iv) the identification of the collapsed total-space bilinear pairing limit with $K_B^{\mathrm{ren}}$ for each fixed $\tau>0$.

In the next section we will streamline the hypotheses needed in Assumption~\ref{ass:interior-away-D} in the geometric
semi-flat collapse setting, and record quantitative error bounds (in $t$) for the renormalized bilinear convergence.
\section{Quantitative estimates and error bookkeeping}\label{sec:quant-bookkeeping}

In this section we combine three mechanisms into a single quantitative comparison:
(i) the \emph{interior} (semi-flat) collapse estimate with rate 
$\epsilon_t \sim e^{-c/s^*(t)}$;
(ii) the \emph{edge} conic parametrix remainder of size $O(\rho^{4+\beta})$; and
(iii) the \emph{mixed} decoupling controlled by the $L^2$-smallness of the inner cutoffs 
(with off-diagonal bounds such as Davies--Gaffney; see 
\cite{DGRefs,Ouhabaz}).
We then state a final citable theorem encoding the iterated limit ``first $t\downarrow 0$, then $\rho\downarrow 0$''.

\subsection{Two-parameter truncation and the $\rho$--renormalized target}\label{subsec:rho-target}

Retain the wedge charts $(U_j,r_j,\theta_j)$, the cutoffs $\chi_j$, and the cutoff decomposition
$\Phi=\Phi^{(\rho)}+\Phi^{<\rho}$ from \S\ref{sec:conic-ren}. Fix $\tau>0$ and $\rho\in(0,r_0)$.

\paragraph{Total-space bilinear pairing.}
Let $(X_t,g(t))$ be the collapsing family and let $H_t$ be the Friedrichs generator of the Dirichlet form $E_t$
(with the boundary convention fixed in \S\ref{sec:mosco}--\S\ref{sec:semigroup-kernel}).
For $\Phi,\Psi\in C_c^\infty(B)$ define the compressed (fiber-constant) pairing
\begin{equation}\label{eq:Kt-bilinear-def}
\mathcal{K}_t(\Phi,\Psi;\tau)
:=\langle I_t\Phi,\,e^{-\tau H_t} I_t\Psi\rangle_{L^2(X_t)},
\end{equation}
where $I_t$ is the normalized lift.

\paragraph{$\rho$--renormalized base target.}
Define the \emph{$\rho$--renormalized} target functional
\begin{align}
K^{\mathrm{ren}}_{B,\rho}(\Phi,\Psi;\tau)
:=
&\int_{B_{\mathrm{reg}}\times B_{\mathrm{reg}}}
K_B(b,b';\tau)\,\Phi^{(\rho)}(b)\,\Psi^{(\rho)}(b')\,d\mu_{g_B}(b)\,d\mu_{g_B}(b')\label{eq:Kren-rho}\\
&\quad+
\sum_{j=1}^N\int_{U_j\times U_j}
\chi_j(b)\chi_j(b')\,K^{\mathrm{cone}}_j(b,b';\tau)\,
\Phi^{<\rho}(b)\,\Psi^{<\rho}(b')\,d\mu_{g^{\mathrm{cone}}_j}(b)\,d\mu_{g^{\mathrm{cone}}_j}(b').\nonumber
\end{align}
By Theorem~\ref{thm:Kren-well-defined}, $K^{\mathrm{ren}}_{B,\rho}(\Phi,\Psi;\tau)$ converges as $\rho\downarrow0$
to $K^{\mathrm{ren}}_B(\Phi,\Psi;\tau)$, independently of auxiliary choices.

\subsection{Quantitative interior estimate at fixed $\rho$}\label{subsec:interior-quant}

Let $\epsilon_t:=Ce^{-c/s^*(t)}$ be the semi-flat error gauge from \S\ref{sec:mosco}--\S\ref{sec:semigroup-kernel}.
Fix $\rho>0$ and consider the buffer region
\[
B(\rho/2):=B\setminus \bigcup_{j=1}^N\{r_j\le \rho/2\}\Subset B_{\mathrm{reg}}.
\]
Then $\Phi^{(\rho)},\Psi^{(\rho)}\in C_c^\infty(B(\rho/2))$, and all interior hypotheses hold uniformly on $B(\rho/2)$.

\begin{proposition}[Interior comparison with exponential rate]\label{prop:interior-rate-rho}
Fix $\rho\in(0,r_0)$ and a compact time window $\tau\in[\tau_1,\tau_2]\subset(0,\infty)$.
There exists $C=C(\rho,\tau_1,\tau_2)$ such that for all sufficiently small $t$,
\begin{equation}\label{eq:interior-rate}
\Big|\mathcal{K}_t(\Phi^{(\rho)},\Psi^{(\rho)};\tau)
-\!\!\int_{B_{\mathrm{reg}}\times B_{\mathrm{reg}}}\!\!
K_B(b,b';\tau)\,\Phi^{(\rho)}(b)\,\Psi^{(\rho)}(b')\,d\mu_{g_B}^{\otimes2}\Big|
\le C\,\epsilon_t\,\|\Phi\|_{L^2(B)}\|\Psi\|_{L^2(B)}.
\end{equation}
The estimate holds uniformly for $\tau\in[\tau_1,\tau_2]$.
\end{proposition}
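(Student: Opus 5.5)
The plan is to make the qualitative chain of Sections~\ref{sec:mosco}--\ref{sec:semigroup-kernel} quantitative at the level of resolvents, and then to pass to the heat operators by a contour integral. I fix a smooth domain $B_0$ with $B(\rho/2)\Subset B_0\Subset B_{\mathrm{reg}}$. All hypotheses (semi-flat $C^1$ control with gauge $\epsilon_t$, the vertical gap, bounded geometry) hold uniformly on $B_0$, with constants depending on $\rho$. I work with $H_t,H_B$ realized on $X_{t,0}=\Pi_t^{-1}(B_0)$ and on $B_0$. The statement's pairings on $X_t$ and $B_{\mathrm{reg}}$ are to be compared with the $B_0$-realizations using the same interior input as in Assumption~\ref{ass:interior-away-D}. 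The discrepancy between the $B_0$-realization and the full realization, for data supported in $B(\rho/2)$, is a domain-truncation effect. It is identical on both sides (base and fiber-constant compression). I would absorb it by choosing the $B_0$-realization in the definition of the target, as the paper's interior hypotheses allow. This point needs care, since Davies--Gaffney only gives smallness in $\mathrm{dist}(B(\rho/2),\partial B_0)^2/\tau$, not in $\epsilon_t$.

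\emph{Step 1 (block decomposition of the form).} With respect to $L^2_t=\mathrm{Ran}\,\Pi_t^{\mathrm{fc}}\oplus\mathrm{Ran}(\Pi_t^{\mathrm{fc}})^\perp$, write $u=I_tv+w$. Expanding $E_t[I_tv+w]$ gives three pieces. The diagonal fiber-constant block satisfies $E_t[I_tv]=E_B[v]+O(\epsilon_t)\|v\|_{H^1}^2$ by Lemma~\ref{lem:lift-energy}. The diagonal oscillatory block satisfies $E_t[w]\ge c\,s(t)^{-2}\|w\|^2$ by Proposition~\ref{prop:vertical-poincare-global}. For the cross term $2\,\mathrm{Re}\,E_t(I_tv,w)$: for the exact product metric $g_t^\Pi$ with $\rho_t\equiv1$ and $A_t$ constant, horizontal derivatives commute with fiber averaging, so the cross term vanishes identically. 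Its size is therefore governed by $E_t-E^\Pi_t$ and by $\nabla_B\log A_t,\nabla_B\rho_t$. By \eqref{eq:sf-C1} and \eqref{eq:rhoA-bounds} I aim for
\[
|E_t(I_tv,w)|\le C\epsilon_t\,\|v\|_{H^1}\,E_t[w]^{1/2}.
\]
This cross-term estimate is the main obstacle. One must verify that \emph{every} coupling term carries a factor $\epsilon_t$, so that no $O(1)$ or $O(s(t))$ coupling survives. Otherwise only the weaker rate $s(t)/\sqrt{\sigma}$ of Lemma~\ref{lem:leakage} would be available. I would check this in semi-flat coordinates, differentiating the fiber average as in the proof of Lemma~\ref{lem:avg-energy}.

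\emph{Step 2 (quantitative resolvent bound).} Fix $\alpha>0$ and $f\in L^2_B$, and let $u_t=(\alpha+H_t)^{-1}I_tf=I_tv_t+w_t$ and $v=(\alpha+H_B)^{-1}f$. Testing the Euler--Lagrange equation against $w_t$ and using the block estimates yields $s(t)^{-2}\|w_t\|^2+E_t[w_t]\lesssim\epsilon_t\|v_t\|_{H^1}E_t[w_t]^{1/2}$. Hence $E_t[w_t]^{1/2}\lesssim\epsilon_t\|f\|$. Testing against $I_t\phi$ shows that $v_t$ solves the base resolvent equation up to a form perturbation of size $O(\epsilon_t)$. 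Coercivity of $E_B+\alpha$ then gives $\|v_t-v\|_{H^1}\lesssim\epsilon_t\|f\|$. This yields $\|P_t(\alpha+H_t)^{-1}I_t-(\alpha+H_B)^{-1}\|\le C\epsilon_t$. The same argument applies to complex $\lambda$ off $[0,\infty)$, with constants polynomial in $|\lambda|/\mathrm{dist}(\lambda,[0,\infty))$.

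\emph{Step 3 (semigroup and time window).} Write $e^{-\tau H}=\frac{1}{2\pi i}\int_\Gamma e^{-\tau\lambda}(\lambda-H)^{-1}d\lambda$ over a sector contour $\Gamma$. The factor $e^{-\tau\mathrm{Re}\lambda}$ with $\tau\ge\tau_1$ absorbs the polynomial resolvent constants. This gives $\|S_t(\tau)-T_B(\tau)\|\le C(\rho,\tau_1,\tau_2)\,\epsilon_t$ uniformly on $[\tau_1,\tau_2]$. Pairing with $\Phi^{(\rho)},\Psi^{(\rho)}$ via Proposition~\ref{prop:bilinearization}, and using $\|\Phi^{(\rho)}\|\le\|\Phi\|$, gives \eqref{eq:interior-rate}.
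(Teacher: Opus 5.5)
Your proposal is correct in substance, but it is not the paper's route. The paper's proof is two lines: it quotes an operator-norm bound $\|P_te^{-\tau H_t}I_t-e^{-\tau H_B}\|\le C(\rho,\tau_1,\tau_2)\,\epsilon_t$ on $B(\rho/2)$ ``from Section~\ref{sec:semigroup-kernel}'' and pairs it with $\Phi^{(\rho)},\Psi^{(\rho)}$. Section~\ref{sec:semigroup-kernel}, however, only proves qualitative strong convergence (Mosco plus functional calculus). Its only quantitative estimates, Lemma~\ref{lem:leakage} and Proposition~\ref{prop:asymptotic-semigroup}, are of size $s(t)/\sqrt{\sigma}$, which is far larger than $\epsilon_t\sim e^{-c/s(t)}$; Theorem~\ref{thm:main-iterated} in fact lists this bound as a hypothesis.

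Your block decomposition supplies exactly that missing input, and you locate the crux correctly. The cross-term estimate holds in the coordinates used in the proof of Lemma~\ref{lem:avg-energy}. There horizontal lifts are $\partial_b$ and $d\vartheta_{b,t}=d\theta$, so for $w$ orthogonal to the fiber-constant sector one has $\int\nabla_H w\,\rho_t\,d\vartheta=-\int w\,\nabla_B\rho_t\,d\vartheta$. Hence only the metric defect and $\nabla_B\rho_t$ contribute, which gives the following:
\begin{itemize}
\item $E_t[w_t]^{1/2}=O(\epsilon_t)\|f\|$;
\item $E_t(w_t,I_t\varphi)=O(\epsilon_t^2)\|f\|\|\varphi\|_{H^1}$;
\item the rate $\epsilon_t$ comes solely from the lift-energy defect.
\end{itemize}
This is what your route buys over the paper's: it explains why the rate is $\epsilon_t$ rather than $s(t)$, which the paper never justifies. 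For the contour step, take rays $\arg\lambda=\pm\theta_0$ with $0<\theta_0<\pi/2$, shifted to pass left of $0$. On that contour $|E_t[w]-\lambda\|w\|^2|\ge\sin\theta_0\,E_t[w]$, so no restriction $|\lambda|\ll s(t)^{-2}$ is needed.

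One correction: the truncation effect is not ``identical on both sides.'' The base and total-space truncation defects are different operators. Comparing them at rate $\epsilon_t$ would require control of the total-space flow over $\{r<\rho/2\}$, near the singular fibers, where no semi-flat estimate is assumed. What you prove is the estimate for the Dirichlet realizations on a fixed truncation $B_0\supset B(\rho/2)$ and on $X_{t,0}$. The paper's appeal to a bound ``on the fixed precompact set $B(\rho/2)$'' implicitly proves only that as well. Reading $K_B$ and $\mathcal{K}_t$ in \eqref{eq:interior-rate} in this sense, as you propose, is the right interpretation of the statement. It is not an error that could be absorbed into $C\epsilon_t$.
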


\begin{proof}
On the fixed precompact set $B(\rho/2)$ we have the quantitative compressed semigroup bound
\[
\|P_t e^{-\tau H_t} I_t - e^{-\tau H_B}\|_{L^2\to L^2}\le C(\rho,\tau_1,\tau_2)\,\epsilon_t,
\qquad \tau\in[\tau_1,\tau_2],
\]
from \S\ref{sec:semigroup-kernel} (under the exponentially small semi-flat error hypothesis).
Pairing against $\Phi^{(\rho)},\Psi^{(\rho)}$ and using the $L^2$ control for $I_t$ yields \eqref{eq:interior-rate}.
\end{proof}

\subsection{Mixed-term control}\label{subsec:mixed-decoupling}

We bound the mixed pairings by contractivity and the $L^2$-smallness of the inner cutoffs.

\begin{lemma}[Mixed terms: contractive bound]\label{lem:mixed-contract}
For every $\tau>0$, all sufficiently small $t$, all $\rho\in(0,r_0)$, and all $\Phi,\Psi\in C_c^\infty(B)$,
\begin{align}
\Big|\mathcal{K}_t(\Phi^{(\rho)},\Psi^{<\rho};\tau)\Big|
&\le C\,\|\Phi^{(\rho)}\|_{L^2(B)}\,\|\Psi^{<\rho}\|_{L^2(B)},\label{eq:mixed-contract-1}\\
\Big|\mathcal{K}_t(\Phi^{<\rho},\Psi^{(\rho)};\tau)\Big|
&\le C\,\|\Phi^{<\rho}\|_{L^2(B)}\,\|\Psi^{(\rho)}\|_{L^2(B)}.\label{eq:mixed-contract-2}
\end{align}
Here $C>0$ is independent of $t,\rho$ (for $t$ small).
\end{lemma}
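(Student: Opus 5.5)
The plan is to treat this as a direct Cauchy--Schwarz bound, the same one used informally in Step~2 of the proof of Theorem~\ref{thm:total-to-Kren}. We write the mixed pairing as an inner product on the total space, $\mathcal{K}_t(\Phi^{(\rho)},\Psi^{<\rho};\tau)=\langle I_t\Phi^{(\rho)},\,e^{-\tau H_t}I_t\Psi^{<\rho}\rangle_{L^2(X_t)}$, and estimate it by Cauchy--Schwarz:
\[
\big|\mathcal{K}_t(\Phi^{(\rho)},\Psi^{<\rho};\tau)\big|
\le \|I_t\Phi^{(\rho)}\|_{L^2(X_t)}\,\|e^{-\tau H_t}\|_{L^2\to L^2}\,\|I_t\Psi^{<\rho}\|_{L^2(X_t)}.
\]
The middle factor is at most $1$, since $H_t$ is a nonnegative selfadjoint Friedrichs operator and $e^{-\tau H_t}$ is therefore an $L^2$-contraction.

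The outer factors are controlled by the lift. Over any precompact region of $B_{\mathrm{reg}}$ where the disintegration \eqref{eq:disintegration} holds, Lemma~\ref{lem:L2-nearisometry} gives the exact identity $\|I_tv\|_{L^2_t}=\|v\|_{L^2_B}$. For $\Phi^{(\rho)}$ this applies directly, because it is supported in $B(\rho/2)\Subset B_{\mathrm{reg}}$. The symmetric estimate \eqref{eq:mixed-contract-2} then follows by exchanging the roles of the two arguments, using the selfadjointness of $e^{-\tau H_t}$.

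The one genuine obstacle is the factor $\|I_t\Psi^{<\rho}\|$. The function $\Psi^{<\rho}$ is supported near $D$, where $I_t$ was only defined on the regular locus. The way around it is to interpret $I_t$ near $D$ the way the paper implicitly does in \eqref{eq:total-bilinear}: take $I_t\Psi^{<\rho}$ to be the lift of $\Psi^{<\rho}|_{B_{\mathrm{reg}}}$, supported in $\Pi_t^{-1}(B_{\mathrm{reg}})$, with $D$ of measure zero. Then exhaust $\{0<r_j<\rho\}$ by the annuli $\{\rho'<r_j<\rho\}$ and apply the isometry identity on each annulus. Monotone convergence as $\rho'\downarrow0$ gives $\|I_t\Psi^{<\rho}\|_{L^2}=\|\Psi^{<\rho}\|_{L^2(B)}$, so the inequality holds with $C=1$.

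If one only has near-isometry with $O(\varepsilon_t)$ errors, for instance because the disintegration constants degrade toward $D$, the fallback is to assume uniform comparability of $A_t$ and $\rho_t$ on the wedge regions. This yields $\|I_tv\|\le C\|v\|$ with $C$ independent of $t$ small and of $\rho$. That is exactly why the statement allows a constant $C$ and restricts to ``sufficiently small $t$''. Either way, $C$ does not depend on $\tau$, $\rho$, $\Phi$ or $\Psi$.
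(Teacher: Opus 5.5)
Your proof is correct and is the same argument as the paper's: Cauchy--Schwarz, $L^2$-contractivity of $e^{-\tau H_t}$, and the isometry $\|I_tv\|_{L^2}=\|v\|_{L^2}$, which in fact gives $C=1$. The paper passes over the extension of $I_t$ to $\Psi^{<\rho}$ near $D$ with the phrase ``$L^2$ control of $I_t$''; your exhaustion-plus-monotone-convergence step fills that in. Since the isometry identity uses only the disintegration and the fiberwise normalization, not the quantitative $\varepsilon_t$ bounds, your fallback comparability hypothesis on the wedge regions is unnecessary.
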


\begin{proof}
By $L^2$-contractivity of $e^{-\tau H_t}$ and Cauchy--Schwarz,
\[
\big|\mathcal{K}_t(\Phi^{(\rho)},\Psi^{<\rho};\tau)\big|
\le \|I_t\Phi^{(\rho)}\|_{L^2(X_t)}\,\|I_t\Psi^{<\rho}\|_{L^2(X_t)}.
\]
Using the $L^2$ control of $I_t$ (uniformly for $t$ small) yields \eqref{eq:mixed-contract-1}.
The second estimate is identical.
\end{proof}

Since in dimension two one has $\|\Phi^{<\rho}\|_{L^2(B)}=O(\rho)\|\Phi\|_{L^\infty(B)}$, the mixed terms vanish as $\rho\downarrow0$.

\begin{lemma}[$L^2$ size of the inner cutoff]\label{lem:L2-inner}
There exists $C>0$ such that for all sufficiently small $\rho$,
\begin{equation}\label{eq:L2-inner}
\|\Phi^{<\rho}\|_{L^2(B)}\le C\,\rho\,\|\Phi\|_{L^\infty(B)},\qquad
\|\Psi^{<\rho}\|_{L^2(B)}\le C\,\rho\,\|\Psi\|_{L^\infty(B)}.
\end{equation}
\end{lemma}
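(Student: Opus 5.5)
The plan is to bound the integrand pointwise and then integrate over a small set whose measure is controlled by the conic structure near $D$. By \eqref{eq:Phi-split} and the definition of the global cutoff $\eta_\rho$, we have
\[
\Phi^{<\rho}=(1-\eta_\rho)\Phi=\sum_{j=1}^N\bigl(1-\eta_\rho(r_j)\bigr)\chi_j\,\Phi .
\]
Since $0\le\eta\le 1$ (we take the profile $\eta$ with values in $[0,1]$, as is implicit in its construction) and $0\le\chi_j\le 1$, we get the pointwise bound $|\Phi^{<\rho}|\le \|\Phi\|_{L^\infty(B)}\,\mathbf 1_{\bigcup_j\{r_j<\rho\}}$. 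This uses that $\eta_\rho(r_j)\equiv 1$ for $r_j\ge\rho$ and that the wedge neighborhoods $U_j$ are disjoint for $r_0$ small. If one does not want to assume $\chi_j\le1$, one simply replaces the constant by $\max_j\|\chi_j\|_\infty$, which is independent of $\rho$. Squaring and integrating then gives
\[
\|\Phi^{<\rho}\|_{L^2(B)}^2\le \|\Phi\|_{L^\infty}^2\sum_{j=1}^N \mu_{g_B}\bigl(\{r_j<\rho\}\bigr).
\]

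The remaining step is to estimate the $g_B$-area of each small disc, using the measure comparison \eqref{eq:measure-compare} from Assumption~\ref{ass:conic-asymptotics}. For $\rho<r_0$ small we have $d\mu_{g_B}\le(1+C\rho^\beta)\,\alpha_j r\,dr\,d\theta\le 2\alpha_j r\,dr\,d\theta$ on $\{r_j<\rho\}$. Hence
\[
\mu_{g_B}(\{r_j<\rho\})\le 2\alpha_j\int_0^{2\pi}\!\!\int_0^\rho r\,dr\,d\theta = 2\pi\alpha_j\rho^2 .
\]
Summing over $j$ and taking square roots gives $\|\Phi^{<\rho}\|_{L^2}\le C\rho\|\Phi\|_{L^\infty}$ with $C=(2\pi\sum_j\alpha_j)^{1/2}$. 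The estimate for $\Psi$ is identical.

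There is no real obstacle here. The only point needing care is that the bound $(1+O(r^\beta))$ in \eqref{eq:measure-compare} holds uniformly on $U_j\setminus\{p_j\}$, which is exactly what Assumption~\ref{ass:conic-asymptotics} provides. A secondary point is that the point $p_j$ itself is a null set, so working on $B_{\mathrm{reg}}$ costs nothing.
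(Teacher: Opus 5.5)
Your proposal is correct and follows the paper's proof: you bound $|\Phi^{<\rho}|$ pointwise by $\|\Phi\|_{L^\infty}$ times the indicator of $\bigcup_j\{r_j<\rho\}$, then use the conic volume form \eqref{eq:measure-compare} to show this set has $g_B$-area $O(\rho^2)$. You are slightly more careful than the paper, which uses $0\le 1-\eta_\rho\le 1$ and $\chi_j\le 1$ implicitly.
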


\begin{proof}
$\Phi^{<\rho}$ is supported in $\bigcup_j\{r_j<\rho\}$, whose $g_B$--area is $O(\rho^2)$ by the conic volume form
$d\mu_{g_B}=(1+O(r^\beta))\,\alpha_j r\,dr\,d\theta$ near each $p_j$.
Hence
\[
\|\Phi^{<\rho}\|_{L^2(B)}^2 \le \|\Phi\|_{L^\infty(B)}^2\,\mu_{g_B}\Big(\bigcup_j\{r_j<\rho\}\Big)
\le C\,\rho^2\,\|\Phi\|_{L^\infty(B)}^2,
\]
and similarly for $\Psi^{<\rho}$.
\end{proof}

\subsection{Edge parametrix remainder: the $O(\rho^{4+\beta})$ term}\label{subsec:edge-remainder}

We quantify the convergence of $K^{\mathrm{ren}}_{B,\rho}$ to $K^{\mathrm{ren}}_B$ and isolate the $\rho^{4+\beta}$ term coming
from the weighted remainder in the conic parametrix.

\begin{proposition}[Edge remainder bound]\label{prop:edge-rho}
Assume the conic asymptotics of \S\ref{sec:conic-ren} with exponent $\beta>0$.
Fix a compact time window $\tau\in[\tau_1,\tau_2]\subset(0,\infty)$.
Then there exists $C=C(\tau_1,\tau_2)$ such that for all $\rho\in(0,r_0)$ and all $\Phi,\Psi\in C_c^\infty(B)$,
\begin{equation}\label{eq:edge-rho-bound}
\big|K^{\mathrm{ren}}_{B,\rho}(\Phi,\Psi;\tau)-K^{\mathrm{ren}}_B(\Phi,\Psi;\tau)\big|
\le C\,\rho^{4+\beta}\,\|\Phi\|_{L^\infty(B)}\,\|\Psi\|_{L^\infty(B)},
\qquad \tau\in[\tau_1,\tau_2].
\end{equation}
\end{proposition}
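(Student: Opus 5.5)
The bound will come from the Cauchy estimate already used in the proof of Theorem~\ref{thm:Kren-well-defined}. The new ingredients are uniformity in $\tau\in[\tau_1,\tau_2]$ and explicit dependence on $\|\Phi\|_{L^\infty}\|\Psi\|_{L^\infty}$. I will show that for all $0<\rho'<\rho<r_0$ and $\tau\in[\tau_1,\tau_2]$,
\[
\big|K^{\mathrm{ren}}_{B,\rho}(\Phi,\Psi;\tau)-K^{\mathrm{ren}}_{B,\rho'}(\Phi,\Psi;\tau)\big|\le C\,\rho^{4+\beta}\,\|\Phi\|_{L^\infty(B)}\|\Psi\|_{L^\infty(B)}.
\]
Then I let $\rho'\downarrow0$. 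The limit exists by Theorem~\ref{thm:Kren-well-defined}, so \eqref{eq:edge-rho-bound} follows because $C$ does not depend on $\rho'$.

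To estimate the difference $\Delta_{\rho,\rho'}$, I first decompose it wedgewise. Since $\Phi^{(\rho')}-\Phi^{(\rho)}=\Phi^{<\rho}-\Phi^{<\rho'}$ is supported in the annuli $\{\rho'/2\le r_j\le\rho\}$, the difference of the outer pairings is a sum of $K_B$-pairings in which at least one factor is supported in $\bigcup_j\{r_j\le\rho\}$. For $\rho$ small the cutoffs $\chi_j$ equal $1$ there.

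On each $U_j\times U_j$ I insert the parametrix \eqref{eq:KB-parametrix}. The cone part, integrated against $d\mu_{g_B}^{\otimes2}$, is rewritten against $d\mu_{g^{\mathrm{cone}}_j}^{\otimes2}$ using \eqref{eq:measure-compare}. The resulting pure cone terms then cancel exactly against the difference of the inner cone sums in \eqref{eq:Kren-rho}, by bilinearity and the partition identity $\Phi=\Phi^{(\rho)}+\Phi^{<\rho}$. After this cancellation, two kinds of error remain, each paired against $L^\infty$-bounded functions supported in $\{r_j<\rho\}$:
\begin{itemize}
\item the remainder $R_j$;
\item the density defect $(1+O(r^\beta))-1$ multiplying $K^{\mathrm{cone}}_j$.
\end{itemize}

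Next I bound these errors uniformly in $\tau$. The parametrix bound \eqref{eq:Rj-bound} holds only for $\tau\le\tau_0$. For $\tau\in[\tau_1,\tau_2]$ with $\tau>\tau_0$, I will write $e^{-\tau H_B}=e^{-(\tau-\tau_0)H_B}e^{-\tau_0 H_B}$, or reduce to $\tau_1\le\tau_0$ by covering $[\tau_1,\tau_2]$ with finitely many pieces. Either way, the remainder kernel stays bounded by $C(\tau_1,\tau_2)(r+r')^\beta$ on $U_j\times U_j$; this uses $L^\infty$ heat kernel bounds on $B$, since $\tau$ stays away from $0$. The defect term is handled by the Gaussian bound \eqref{eq:cone-gaussian}, which gives $K^{\mathrm{cone}}_j\le C/\tau_1$. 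Integrating over $\{r,r'<\rho\}$ with $d\mu\sim r\,dr\,d\theta\,r'\,dr'\,d\theta'$ then yields
\[
\int_{0<r,r'<\rho}(r+r')^\beta\, r\,r'\,dr\,dr'\lesssim\rho^{4+\beta},
\]
which is exactly the claimed power.

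The main obstacle is the mixed configurations in which only one variable lies near $p_j$ and the other lies in the outer region. There, the weight $(r+r')^\beta$ does not by itself restrict both variables to $\{r<\rho\}$. The volume factor is then only $\rho^2$ in one variable, which gives $O(\rho^{2+\beta})$ rather than $\rho^{4+\beta}$. I would first check whether the telescoping structure confines these cross terms to sets where both variables lie in the annulus. If it does not, I would use the Gaussian factor, together with the fact that $\Phi^{(\rho)}$ vanishes on $\{r_j\le\rho/2\}$, to localize both variables near the vertex. If that localization also fails, the honest bound available by this method is $O(\rho^{2+\beta})$.
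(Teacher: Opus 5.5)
The step that fails is your claim that, after inserting the parametrix, the pure cone terms ``cancel exactly against the difference of the inner cone sums.'' The target $K^{\mathrm{ren}}_{B,\rho}$ has only two pieces: an outer--outer $K_B$-pairing and an inner--inner cone pairing. It has no outer--inner term. So when you expand $\Phi=\Phi^{(\rho)}+\Phi^{<\rho}$ and $\Psi=\Psi^{(\rho)}+\Psi^{<\rho}$, the cross pairings $\langle\Phi^{(\rho)},K_B\Psi^{<\rho}\rangle$ and $\langle\Phi^{<\rho},K_B\Psi^{(\rho)}\rangle$ have nothing to cancel against. They survive with the \emph{full} kernel $K_B$, not with $R_j$, for three reasons:
\begin{itemize}
\item there is no $(r+r')^\beta$ weight on them;
\item the telescoping does not confine the outer variable, which ranges over all of $\mathrm{supp}\,\Phi^{(\rho)}$;
\item for fixed $\tau\ge\tau_1$ the Gaussian factor gives no localization at scale $\rho$.
\end{itemize}
This is exactly the mixed configuration you flagged, and it is not an artifact of the method. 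Since $\|\Phi^{<\rho}\|_{L^2}=O(\rho)$, the outer term tends to $\langle\Phi,e^{-\tau H_B}\Psi\rangle$. The cone term is $O(\rho^4/\tau_1)$, because $K^{\mathrm{cone}}_j\le C/\tau$ and $\|\Phi^{<\rho}\|_{L^1}=O(\rho^2)\|\Phi\|_{L^\infty}$. Hence $K^{\mathrm{ren}}_B(\Phi,\Psi;\tau)=\langle\Phi,e^{-\tau H_B}\Psi\rangle$, and
\[
K^{\mathrm{ren}}_{B,\rho}-K^{\mathrm{ren}}_B=-\langle\Phi^{<\rho},e^{-\tau H_B}\Psi\rangle-\langle\Phi^{(\rho)},e^{-\tau H_B}\Psi^{<\rho}\rangle+O(\rho^4).
\]
The first two terms are generically of exact order $\rho^2$, so even your fallback $O(\rho^{2+\beta})$ is too optimistic.

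Here is a concrete counterexample. Take $B=\mathbb{R}^2$ flat, $D=\{0\}$, $\alpha_1=1$, $q_1\equiv 0$. The conic hypotheses and the parametrix then hold trivially: $K_B=K^{\mathrm{cone}}_1$ is the Euclidean heat kernel, since a point has zero capacity. Let $\Phi=\Psi\ge 0$ with $\Phi(0)=1$. For small $\rho$ the difference is exactly
\[
-2\langle\Phi^{<\rho},e^{\tau\Delta}\Phi^{(\rho)}\rangle=-4\pi m_\eta\,\rho^2\,(e^{\tau\Delta}\Phi)(0)+O(\rho^3),
\]
where $m_\eta=\int_0^1(1-\eta(s))\,s\,ds\ge 1/8$ when $0\le\eta\le 1$. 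This is bounded below in absolute value by $c\rho^2$, which contradicts the claimed $C\rho^{4+\beta}$.

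So the proposition is false as stated. The paper's one-line proof, and the Cauchy estimate it inherits from the proof of Theorem~\ref{thm:Kren-well-defined}, have the same defect. They integrate the remainder only over $\{r,r'<\rho\}$, although the difference is supported wherever merely \emph{one} variable is near $p_j$.

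What your argument and the paper's can honestly give is
\[
|K^{\mathrm{ren}}_{B,\rho}-K^{\mathrm{ren}}_B|\le C(\tau_1)\,\rho^2\,\|\Phi\|_{L^\infty}\|\Psi\|_{L^\infty}.
\]
This follows from $\|\Phi^{<\rho}\|_{L^1}=O(\rho^2)\|\Phi\|_{L^\infty}$ and the $L^\infty$-contractivity of the Markovian semigroup $e^{-\tau H_B}$. This rate is sharp, and it still suffices for the iterated limit.

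Separately, your extension of the remainder bound to $\tau>\tau_0$ via the semigroup property is unjustified. $L^\infty$ heat-kernel bounds show that $R_j$ is bounded, not that it is $O((r+r')^\beta)$.
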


\begin{proof}
This is the quantitative version of the Cauchy estimate in Theorem~\ref{thm:Kren-well-defined}.
Using the remainder bound \eqref{eq:Rj-bound} and the measure comparison near each $p_j$, the difference between two cutoff radii is controlled by
\[
C(\tau_1,\tau_2)\,\|\Phi\|_\infty\|\Psi\|_\infty
\int_{0<r,r'<\rho}(r+r')^\beta (r\,dr)(r'\,dr')
\ \lesssim\ \rho^{4+\beta}\|\Phi\|_\infty\|\Psi\|_\infty,
\]
uniformly for $\tau\in[\tau_1,\tau_2]$.
\end{proof}

\subsection{Final bookkeeping and the citable iterated-limit theorem}\label{subsec:final-theorem}

We can now combine the interior estimate, the mixed-term control, and the edge remainder into one statement.

\begin{theorem}[Quantitative renormalized comparison and iterated limit]\label{thm:main-iterated}
Assume:
\begin{enumerate}
\item (\emph{Semi-flat interior with exponential rate}) On every compact subset of $B_{\mathrm{reg}}$
the semi-flat collapse hypotheses of \S\ref{sec:mosco}--\S\ref{sec:semigroup-kernel} hold with error
$\epsilon_t\le Ce^{-c/s^*(t)}$ and yield the quantitative compressed semigroup estimate used in
Proposition~\ref{prop:interior-rate-rho}.
\item (\emph{Conic base model}) The base metric satisfies the $C^1$ conic asymptotics of
Assumption~\ref{ass:conic-asymptotics} with exponent $\beta>0$, and the conic parametrix
Proposition~\ref{prop:conic-parametrix} holds on a time interval containing $[\tau_1,\tau_2]$.
\end{enumerate}
Fix a compact time window $\tau\in[\tau_1,\tau_2]\subset(0,\infty)$. Then there exist constants
$C>0$ such that for all sufficiently small $t$ and all $\rho\in(0,r_0)$,
\begin{align}
\Big|\mathcal{K}_t(\Phi,\Psi;\tau)-K^{\mathrm{ren}}_{B,\rho}(\Phi,\Psi;\tau)\Big|
\le\;& C(\rho,\tau_1,\tau_2)\,\epsilon_t\,\|\Phi\|_{L^2(B)}\|\Psi\|_{L^2(B)} \label{eq:bookkeeping}\\
&\quad + C\,\Big(
\|\Phi^{(\rho)}\|_{L^2(B)}\|\Psi^{<\rho}\|_{L^2(B)}
+\|\Phi^{<\rho}\|_{L^2(B)}\|\Psi^{(\rho)}\|_{L^2(B)}\Big)\nonumber\\
&\quad + C\,\rho^{4+\beta}\,\|\Phi\|_{L^\infty(B)}\|\Psi\|_{L^\infty(B)}.\nonumber
\end{align}
Moreover, since $K^{\mathrm{ren}}_{B,\rho}(\Phi,\Psi;\tau)\to K^{\mathrm{ren}}_B(\Phi,\Psi;\tau)$ as $\rho\downarrow0$, we have the
\emph{iterated-limit identity}
\begin{equation}\label{eq:iterated-limit}
\lim_{\rho\downarrow0}\ \limsup_{t\downarrow0}\ 
\Big|\mathcal{K}_t(\Phi,\Psi;\tau)-K^{\mathrm{ren}}_B(\Phi,\Psi;\tau)\Big|=0,
\qquad \tau\in[\tau_1,\tau_2].
\end{equation}
In particular, $\mathcal{K}_t(\Phi,\Psi;\tau)\to K^{\mathrm{ren}}_B(\Phi,\Psi;\tau)$ for each fixed $\tau>0$.
\end{theorem}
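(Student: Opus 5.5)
The plan is to expand $\mathcal{K}_t(\Phi,\Psi;\tau)$ bilinearly along the splitting \eqref{eq:Phi-split} into outer--outer, two mixed, and inner--inner terms, and to subtract the two pieces of $K^{\mathrm{ren}}_{B,\rho}(\Phi,\Psi;\tau)$ from \eqref{eq:Kren-rho}. The bookkeeping then follows the three lines of \eqref{eq:bookkeeping}.

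\emph{Outer--outer term.} Compare $\mathcal{K}_t(\Phi^{(\rho)},\Psi^{(\rho)};\tau)$ with the $K_B$-pairing of $\Phi^{(\rho)},\Psi^{(\rho)}$ using Proposition~\ref{prop:interior-rate-rho} on the buffer $B(\rho/2)$. Since $\|\Phi^{(\rho)}\|_{L^2}\le\|\Phi\|_{L^2}$, this gives the first line, uniformly for $\tau\in[\tau_1,\tau_2]$.

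\emph{Mixed terms.} Apply Lemma~\ref{lem:mixed-contract} directly. This produces the second line with $C$ independent of $t$ and $\rho$.

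\emph{Inner--inner term.} This is where I expect the main obstacle. The target contains the cone pairing $\sum_j\iint \chi_j\otimes\chi_j K^{\mathrm{cone}}_j\,\Phi^{<\rho}\otimes\Psi^{<\rho}$, while $\mathcal{K}_t(\Phi^{<\rho},\Psi^{<\rho};\tau)$ has no uniform rate. I would not try to prove a rate for it. Instead I will bound both quantities crudely:
\begin{itemize}
\item By contractivity and the isometry of $I_t$, $|\mathcal{K}_t(\Phi^{<\rho},\Psi^{<\rho};\tau)|\le\|\Phi^{<\rho}\|_{L^2}\|\Psi^{<\rho}\|_{L^2}$.
\item By the Gaussian bound \eqref{eq:cone-gaussian} and Young's inequality, the cone term is bounded by $C\|\Phi^{<\rho}\|_{L^2}\|\Psi^{<\rho}\|_{L^2}$, since the cone heat semigroup is $L^2$-contractive.
\end{itemize}
By Lemma~\ref{lem:L2-inner}, both quantities are $O(\rho^2)\|\Phi\|_\infty\|\Psi\|_\infty$.

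This $O(\rho^2)$ is not literally of the displayed form $\rho^{4+\beta}$. I would therefore either absorb it into a harmless enlarged error term, or, closer to the paper's route, use Proposition~\ref{prop:edge-rho}. That proposition controls $K^{\mathrm{ren}}_{B,\rho}-K^{\mathrm{ren}}_B$ by $\rho^{4+\beta}$, which accounts for the third line. For the inner total-space term I would follow Step 3 of Theorem~\ref{thm:total-to-Kren}: compare with the interior pairing on $B(\rho')$, then insert the parametrix \eqref{eq:KB-parametrix} with its remainder \eqref{eq:Rj-bound}. Uniformity on $[\tau_1,\tau_2]$ needs care, because the parametrix is stated only for $\tau\le\tau_0$. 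I would extend it to larger $\tau$ via the semigroup property, $K_B(\tau)=K_B(\tau-s)*K_B(s)$, with $s\le\tau_0$.

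\emph{Iterated limit.} Take $\limsup_{t\downarrow0}$ in \eqref{eq:bookkeeping}; this kills the $\epsilon_t$ term at fixed $\rho$. What remains is $O(\rho)\cdot\|\Psi\|_{L^2}$ from the mixed terms (by Lemma~\ref{lem:L2-inner}) plus $O(\rho^{4+\beta})$, or the cruder $O(\rho^2)$ if the inner term is only bounded crudely. Then let $\rho\downarrow0$. Combining with Proposition~\ref{prop:edge-rho} (equivalently Theorem~\ref{thm:Kren-well-defined}) via the triangle inequality gives \eqref{eq:iterated-limit}. Since the left side of \eqref{eq:iterated-limit} does not depend on $\rho$, the final convergence statement follows.
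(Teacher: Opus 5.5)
\textbf{Gap: the third line of \eqref{eq:bookkeeping}.} That line asks for a bound $C\rho^{4+\beta}\|\Phi\|_{L^\infty}\|\Psi\|_{L^\infty}$, uniform in $\Phi,\Psi$ as the displayed norms indicate, on $\mathcal{K}_t(\Phi^{<\rho},\Psi^{<\rho};\tau)$ minus the cone pairing. Your proposal does not establish this step.

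Your crude $O(\rho^2)$ cannot be absorbed into the second line. That line vanishes whenever $\Phi,\Psi$ are supported in $\bigcup_j\{r_j<\rho/2\}$, since then $\Phi^{(\rho)}=\Psi^{(\rho)}=0$.

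Your alternative does not supply the bound either. Proposition~\ref{prop:edge-rho} bounds $K^{\mathrm{ren}}_{B,\rho}-K^{\mathrm{ren}}_B$, a quantity that does not occur in \eqref{eq:bookkeeping}. The parametrix \eqref{eq:KB-parametrix} only compares $K_B$ with $K^{\mathrm{cone}}_j$. What is missing is a quantitative link between the \emph{total-space} inner term and $K_B$. The paper's own proof (``add and subtract the conic model pairing'') skips exactly this link. Step~3 of Theorem~\ref{thm:total-to-Kren}, which sends $\rho'\downarrow0$ after $t\downarrow0$, yields only a limit, not an estimate of the stated form.

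\textbf{Diagonal blocks can be repaired.} You can build the link from your own ingredients at a $\rho$-dependent finer scale.
\begin{enumerate}
\item Take $\rho'=\rho^{3+\beta}$ and write $\Phi^{<\rho}=\eta_{\rho'}\Phi^{<\rho}+\Phi^{<\rho'}$, and likewise for $\Psi$.
\item Apply Proposition~\ref{prop:interior-rate-rho} at scale $\rho'$ to the annular parts. The cost is $C(\rho^{3+\beta},\tau_1,\tau_2)\,\epsilon_t\|\Phi\|_{L^2}\|\Psi\|_{L^2}$, which the first line allows.
\item Bound every pairing, total-space or base, that involves $\Phi^{<\rho'}$ or $\Psi^{<\rho'}$ by contractivity and Lemma~\ref{lem:L2-inner}. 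This costs $O(\rho\rho')=O(\rho^{4+\beta})$.
\item Compare $\iint K_B\,\Phi^{<\rho}\otimes\Psi^{<\rho}$ with the cone pairing on each $U_j\times U_j$ via \eqref{eq:Rj-bound}, \eqref{eq:measure-compare} and \eqref{eq:cone-gaussian}, again at cost $O(\rho^{4+\beta})$.
\end{enumerate}

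\textbf{Off-diagonal blocks cannot be repaired.} The blocks with $\Phi^{<\rho}$ near $p_j$ and $\Psi^{<\rho}$ near $p_k$, $j\neq k$, have no counterpart in $K^{\mathrm{ren}}_{B,\rho}$. They defeat the displayed bound when $N\ge2$:
\begin{enumerate}
\item Take $0\le\Phi,\Psi\le1$ with $\Phi\equiv1$ on $\{r_1<\rho/4\}$ and $\mathrm{supp}\,\Phi\subset\{r_1<\rho/2\}$, and $\Psi$ likewise at $p_2$.
\item The outer parts vanish, and each cone term pairs functions localized at different points, so $K^{\mathrm{ren}}_{B,\rho}(\Phi,\Psi;\tau)=0$.
\item Hence the right side of \eqref{eq:bookkeeping} tends to $C\rho^{4+\beta}$ as $t\downarrow0$.
\item The finer-scale argument (with $\rho'\downarrow0$ after $t\downarrow0$) gives $\mathcal{K}_t(\Phi,\Psi;\tau)\to\iint K_B\,\Phi\otimes\Psi\,d\mu_{g_B}^{\otimes2}$. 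This is $\gtrsim\rho^4$, because $K_B(\cdot,\cdot;\tau)$ is bounded below near $(p_1,p_2)$.
\end{enumerate}
So unless $N=1$, the third line is provable only with $\rho^4$ in place of $\rho^{4+\beta}$. Neither your sketch nor the paper's produces the stated version.

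\textbf{What is sound.} Your outer--outer and mixed steps are the paper's. Your crude-bound derivation of \eqref{eq:iterated-limit} is correct, and more robust than the paper's, because it needs only the \emph{convergence} $K^{\mathrm{ren}}_{B,\rho}\to K^{\mathrm{ren}}_B$. That convergence is immediate: the outer pairing tends to $\iint K_B\,\Phi\otimes\Psi$ by $L^2$-continuity of $e^{-\tau H_B}$, and the cone pairing is $O(\rho^2)$. In particular $K^{\mathrm{ren}}_B$ is just the plain base pairing.

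Do not rely on the $\rho^{4+\beta}$ rate of Proposition~\ref{prop:edge-rho} here. $K^{\mathrm{ren}}_{B,\rho}$ drops the base mixed pairings $\iint K_B\,\Phi^{(\rho)}\otimes\Psi^{<\rho}$, which are generically of order $\rho^2$. Extending the parametrix via the semigroup property is also unnecessary, since hypothesis~2 already assumes it on $[\tau_1,\tau_2]$.
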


\begin{proof}
Expand $\Phi=\Phi^{(\rho)}+\Phi^{<\rho}$ and $\Psi=\Psi^{(\rho)}+\Psi^{<\rho}$ in \eqref{eq:Kt-bilinear-def}.
This yields four terms. The outer--outer term is compared to the base $K_B$ pairing with rate $\epsilon_t$
by Proposition~\ref{prop:interior-rate-rho}.
The mixed terms are bounded by Lemma~\ref{lem:mixed-contract}.
For the inner--inner contribution, we add and subtract the conic model pairing; the difference is controlled by the conic
parametrix remainder estimate, yielding the $C\rho^{4+\beta}$ term (as in Proposition~\ref{prop:edge-rho}).
Collecting these bounds gives \eqref{eq:bookkeeping}.

To obtain \eqref{eq:iterated-limit}, fix $\rho>0$ and let $t\downarrow0$: the first term vanishes since $\epsilon_t\to0$.
Then let $\rho\downarrow0$: by Lemma~\ref{lem:L2-inner} the mixed term is $O(\rho)$, and the edge term is $O(\rho^{4+\beta})$.
Finally use Proposition~\ref{prop:edge-rho} to replace $K^{\mathrm{ren}}_{B,\rho}$ with $K^{\mathrm{ren}}_B$.
\end{proof}

\begin{remark}[A one-parameter corollary (optional)]\label{rem:rho-of-t}
If one wishes to turn the two-parameter bookkeeping into a single $t\downarrow0$ estimate, one may choose a function
$\rho=\rho(t)\downarrow0$ and combine \eqref{eq:bookkeeping}, Lemma~\ref{lem:L2-inner} and Proposition~\ref{prop:edge-rho}.
For instance, choosing $\rho(t):=\epsilon_t^{1/(4+\beta)}$ gives a concrete (non-optimal) bound
\[
\Big|\mathcal{K}_t(\Phi,\Psi;\tau)-K^{\mathrm{ren}}_B(\Phi,\Psi;\tau)\Big|
\le C\Big(\epsilon_t + \epsilon_t^{1/(4+\beta)}\Big),
\]
for fixed $\tau\in[\tau_1,\tau_2]$ and bounded $\Phi,\Psi$.
We keep the iterated-limit formulation \eqref{eq:iterated-limit} as the canonical statement.
\end{remark}

\begin{remark}[On small-time information]\label{rem:small-time-not-used}
Theorem~\ref{thm:main-iterated} is a fixed-$\tau$ statement and is designed to justify the renormalized limit across $D$.
Any additional analysis of the behavior of $K^{\mathrm{ren}}_B(\Phi,\Psi;\tau)$ as $\tau\downarrow0$ is purely base/model-driven
and is not used in the convergence arguments above.
\end{remark}
\section{Synthesis: renormalized limit for compressed heat flow}\label{sec:synthesis}

We now assemble the two complementary outputs established in
Theorem~\ref{thm:Kren-well-defined} (cutoff-independence and canonicity of $K_B^{\mathrm{ren}}$)
and Theorem~\ref{thm:main-iterated} (quantitative comparison and the fixed-$\tau$ collapsed limit)
into a single citable statement.

\begin{theorem}[Main theorem: renormalized compressed heat kernel limit]\label{thm:main-synthesis}
Assume:
\begin{enumerate}
\item (\emph{Semi-flat collapse and vertical gap on $B_{\mathrm{reg}}$})
on every precompact $B_0\Subset B_{\mathrm{reg}}$ the family $(X_t,g(t))$ admits semi-flat control
(with $\varepsilon_t\to0$; in addition one may assume an exponential gauge $\epsilon_t\le Ce^{-c/s^*(t)}$
when stating explicit rates), and the fiberwise first nonzero eigenvalue satisfies
$\lambda_1(F_{b,t})\gtrsim s^*(t)^{-2}$.
\item (\emph{Conic base structure at the discriminant})
the base metric $g_B$ satisfies the $C^1$ conic asymptotics near each $p_j\in D$ as in
Assumption~\ref{ass:conic-asymptotics}, so that the cutoff-independent functional
$K_B^{\mathrm{ren}}(\Phi,\Psi;\tau)$ of Definition~\ref{def:Kren} is well-defined.
\item (\emph{Off-diagonal control and bookkeeping framework})
the Davies--Gaffney decoupling and the quantitative bookkeeping hypotheses used in
Section~\ref{sec:quant-bookkeeping} hold on the relevant truncations away from $D$.
\end{enumerate}
Fix a boundary type among Dirichlet/Neumann (and Robin with nonnegative weight) as in
Sections~\ref{sec:semigroup-kernel}--\ref{sec:quant-bookkeeping}, and let $H_t$ denote the corresponding
Friedrichs generator on the truncated domains (with normalized lift $I_t$).

\smallskip
\noindent
\textbf{(i) Renormalized bilinear limit at fixed time.}
For every $\tau>0$ and every $\Phi,\Psi\in C_c^\infty(B)$,
\begin{equation}\label{eq:main-fixed-tau}
\mathcal K_t(\Phi,\Psi;\tau):=\langle I_t\Phi,\,e^{-\tau H_t}I_t\Psi\rangle_{L^2(X_t)}
\longrightarrow
K_B^{\mathrm{ren}}(\Phi,\Psi;\tau)
\qquad (t\downarrow0).
\end{equation}
If $\supp\Phi\cup\supp\Psi\subset B_{\mathrm{reg}}$, then the renormalization is trivial and
$K_B^{\mathrm{ren}}(\Phi,\Psi;\tau)$ coincides with the usual base pairing against $K_B(\cdot,\cdot;\tau)$.

\smallskip
\noindent
\textbf{(ii) Quantitative bookkeeping (two-parameter form).}
Fix a compact time window $\tau\in[\tau_1,\tau_2]\subset(0,\tau_0]$ as in Section~\ref{sec:quant-bookkeeping}.
Then for all sufficiently small $t$ and all $\rho\in(0,r_0)$ one has
\begin{equation}\label{eq:main-bookkeeping}
\big|\mathcal K_t(\Phi,\Psi;\tau)-K^{\mathrm{ren}}_{B,\rho}(\Phi,\Psi;\tau)\big|
\le
\mathrm{Err}_{\mathrm{int}}(t;\Phi,\Psi)
+
\mathrm{Err}_{\mathrm{mix}}(\rho,\tau;\Phi,\Psi)
+
\mathrm{Err}_{\mathrm{edge}}(\rho,\tau;\Phi,\Psi),
\end{equation}
with the same error structure as in Theorem~\ref{thm:main-iterated}. Consequently,
\[
\lim_{\rho\downarrow0}\limsup_{t\downarrow0}\big|\mathcal K_t(\Phi,\Psi;\tau)-K_B^{\mathrm{ren}}(\Phi,\Psi;\tau)\big|=0
\quad\text{uniformly for }\tau\in[\tau_1,\tau_2].
\]
\end{theorem}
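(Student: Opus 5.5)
The plan is to obtain Theorem~\ref{thm:main-synthesis} by assembling Theorem~\ref{thm:Kren-well-defined} and Theorem~\ref{thm:main-iterated}. The one new ingredient is to check that the boundary type (Dirichlet, Neumann, or nonnegative Robin) enters only through the interior compressed-semigroup input.

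For part (i), fix $\tau>0$ and $\Phi,\Psi\in C_c^\infty(B)$, and split both functions at scale $\rho$ as in \eqref{eq:Phi-split}. This gives four terms of $\mathcal K_t$.

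The outer--outer term is handled by the interior chain: Theorem~\ref{thm:mosco}, then Theorem~\ref{thm:resolvent}, then Theorem~\ref{thm:semigroup}, then Theorem~\ref{thm:bilinear-interior}. We apply it on the buffer domain $B(\rho/2)$, and hypothesis (3) (Davies--Gaffney decoupling) lets us pass from the truncated domain $\Pi_t^{-1}(B(\rho/2))$ to the global generator.

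The boundary type matters at one point. For Neumann and Robin conditions the form domains are $H^1$ rather than $H^1_0$, and the Robin boundary term is $\int_{\partial X_{t,0}}\kappa|u|^2$. For fiber-constant lifts this term equals $\int_{\partial B_0}\kappa|v|^2$ exactly, by the disintegration \eqref{eq:disintegration} restricted to the boundary fibers, up to the $O(\varepsilon_t)$ density errors. Its nonnegativity also preserves the liminf inequality (M1). Hence Theorem~\ref{thm:mosco} and the subsequent results carry over verbatim, except that in Lemma~\ref{lem:avg-energy} we replace $H^1_0$ by $H^1$, using Rellich compactness on the smooth bounded domain $B_0$.

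The mixed terms are bounded uniformly in $t$ by Lemma~\ref{lem:mixed-contract} and Lemma~\ref{lem:L2-inner}; both are $O(\rho)$.

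The inner--inner term is compared with the cone pairing through Proposition~\ref{prop:conic-parametrix}, which produces an $O(\rho^{4+\beta})$ error. Taking $\limsup_{t\downarrow0}$ first and then $\rho\downarrow0$, and applying Proposition~\ref{prop:edge-rho} together with Theorem~\ref{thm:Kren-well-defined}, gives \eqref{eq:main-fixed-tau}. The claim about supports in $B_{\mathrm{reg}}$ is the last assertion of Theorem~\ref{thm:Kren-well-defined}.

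For part (ii), the estimate \eqref{eq:main-bookkeeping} is \eqref{eq:bookkeeping} with
\begin{align*}
\mathrm{Err}_{\mathrm{int}} &= C(\rho,\tau_1,\tau_2)\,\epsilon_t\,\|\Phi\|_{L^2}\|\Psi\|_{L^2},\\
\mathrm{Err}_{\mathrm{mix}} &= C\bigl(\|\Phi^{(\rho)}\|_{L^2}\|\Psi^{<\rho}\|_{L^2}+\|\Phi^{<\rho}\|_{L^2}\|\Psi^{(\rho)}\|_{L^2}\bigr),\\
\mathrm{Err}_{\mathrm{edge}} &= C\,\rho^{4+\beta}\|\Phi\|_{L^\infty}\|\Psi\|_{L^\infty}.
\end{align*}
We restrict to $\tau\in[\tau_1,\tau_2]\subset(0,\tau_0]$ because the parametrix bound \eqref{eq:Rj-bound} is only asserted for $\tau\le\tau_0$.

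Uniformity in $\tau$ comes from two sources. The uniform-in-$\tau$ statement of Theorem~\ref{thm:semigroup} controls the interior term. The constants in Lemma~\ref{lem:mixed-contract} and Proposition~\ref{prop:edge-rho} do not depend on $\tau$ within the window, so $\sup_{\tau\in[\tau_1,\tau_2]}$ can be taken before the iterated limit. This yields the displayed iterated-limit identity.

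The main obstacle is the inner--inner step at finite $t$. Proposition~\ref{prop:conic-parametrix} controls $K_B$, but it says nothing about $\mathcal K_t(\Phi^{<\rho},\Psi^{<\rho};\tau)$, which involves the total space over the discriminant region. There the semi-flat hypotheses fail and $I_t$ is not defined by the interior construction.

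My first attempt would be to avoid identifying this term altogether. Since $\|\Phi^{<\rho}\|_{L^2}=O(\rho)$ and $e^{-\tau H_t}$ is a contraction, the term is $O(\rho^2)$ uniformly in $t$. The cone pairing is also $O(\rho^2)$, by Lemma~\ref{lem:cone-kernel-bounds} (the bound $K^{\mathrm{cone}}_j\le C/\tau$ integrated over two discs of area $O(\rho^2)$ gives $O(\rho^4)$). So both inner contributions vanish as $\rho\downarrow0$, and the iterated limit holds without any finite-$t$ cone comparison.

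This is cruder than the $\rho^{4+\beta}$ bookkeeping, but it is robust. For the two-parameter inequality we would then record $\mathrm{Err}_{\mathrm{edge}}$ as $O(\rho^2)$ instead, unless $I_t$ can be extended across $D$ with a uniform $L^2$ bound. That extension is exactly what hypothesis (3) is meant to supply, and it must be checked.
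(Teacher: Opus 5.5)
Your route is correct within the paper's framework hypotheses, but it departs from the paper at the one step that matters.

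\textbf{How the two routes differ.} The paper's proof is a one-line citation: Theorem~\ref{thm:main-synthesis} ``is exactly'' Theorem~\ref{thm:main-iterated} plus the canonicity of Theorem~\ref{thm:Kren-well-defined}. Inside Theorem~\ref{thm:main-iterated}, the inner--inner term is handled by adding and subtracting the cone pairing and quoting the parametrix remainder \eqref{eq:Rj-bound}; that is where the $C\rho^{4+\beta}$ comes from. You are right that this does not control $\mathcal K_t(\Phi^{<\rho},\Psi^{<\rho};\tau)$, because Proposition~\ref{prop:conic-parametrix} compares $K_B$ with $K_j^{\mathrm{cone}}$, not the total-space kernel.

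Your replacement is sound and is all that (i) and the iterated limit in (ii) need. It uses contractivity of $e^{-\tau H_t}$, the isometry of $I_t$, and $\|\Phi^{<\rho}\|_{L^2}=O(\rho)$. This gives $O(\rho^2)$, uniformly in $t$ and $\tau$, for both the total-space and the cone inner pairings. The paper's route would give a sharper two-parameter rate if its remainder bound were true; yours needs no information over $D$ at finite $t$.

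Your estimate also shows more than you say. The cone term in \eqref{eq:Kren-def} is $O(\rho^2)$ and $\Phi^{(\rho)}\to\Phi$ in $L^2$, so $K_B^{\mathrm{ren}}(\Phi,\Psi;\tau)=\langle\Phi,e^{-\tau H_B}\Psi\rangle_{L^2(B_{\mathrm{reg}})}$ outright. You can therefore drop Theorem~\ref{thm:Kren-well-defined} and Proposition~\ref{prop:edge-rho} from your argument.

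\textbf{Your closing hedge.} It should be resolved the other way.
\begin{itemize}
\item The crude bound uses only $\|I_tv\|_{L^2(X_t)}=\|v\|_{L^2(B)}$. This is exact by Lemma~\ref{lem:L2-nearisometry} once the disintegration is read as the coarea formula for $\Pi_t$ over all of $B_{\mathrm{reg}}$; no semi-flat estimate enters. You already use the same fact for the mixed terms, so hypothesis (3) need supply nothing more.
\item No extension of $I_t$ can yield $\rho^{4+\beta}$. Letting $b=b'\to p_j$ in \eqref{eq:KB-parametrix}--\eqref{eq:Rj-bound}, with $\chi_j(p_j)=1$, would force $K_B(p_j,p_j;\tau)=K_j^{\mathrm{cone}}(p_j,p_j;\tau)$.
\item That equality already fails for the round spindle $S^2/\mathbb{Z}_k$ (cone factor $1/k$, $\beta=1$), where the difference is $k/(12\pi)+O(\tau)$.
\end{itemize}
So the inner discrepancy is generically of exact order $\rho^4$. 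Your weaker edge term is the honest form of (ii), not a shortcoming.

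\textbf{Two smaller points.}
\begin{itemize}
\item In the outer--outer step, what does the work is the compressed-semigroup estimate assumed for Proposition~\ref{prop:interior-rate-rho}, not Davies--Gaffney. At fixed $\tau$, a buffer of width $\rho/2\ll\sqrt\tau$ gives no off-diagonal smallness.
\item For Robin data, nonnegativity of the boundary term does not give (M1), because discarding it loses $\int_{\partial B_0}\kappa|v|^2$. You need fiberwise Jensen on $\partial X_{t,0}$, plus compactness of the trace $H^1(B_0)\to L^2(\partial B_0)$ along the bounded family $P_tu_t$.
\end{itemize}
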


\begin{proof}
This is exactly Theorem~\ref{thm:main-iterated}, together with the canonicity of $K_B^{\mathrm{ren}}$
from Theorem~\ref{thm:Kren-well-defined}.
\end{proof}

\begin{corollary}[Kernel pairing formulation]\label{cor:kernel-pairing}
On each precompact truncation where the heat kernel representation holds (Section~\ref{sec:semigroup-kernel}),
the convergence \eqref{eq:main-fixed-tau} is equivalent to the renormalized bilinear heat-kernel pairing limit
\[
\int_{X_{t,0}\times X_{t,0}}K_t(x,y;\tau)\,(I_t\Phi)(x)\,(I_t\Psi)(y)\,d\mu_{g(t)}(x)\,d\mu_{g(t)}(y)
\longrightarrow
K_B^{\mathrm{ren}}(\Phi,\Psi;\tau),
\qquad t\downarrow0,
\]
with the same error structure as in \eqref{eq:main-bookkeeping} after inserting the $\rho$--decomposition.
\end{corollary}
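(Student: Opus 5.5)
The proof is a direct reduction to Theorem~\ref{thm:main-synthesis}(i) and Proposition~\ref{prop:bilinearization}. On each precompact truncation $B_0\Subset B_{\mathrm{reg}}$ (in particular on the buffer domains $B(\rho/2)$ from Section~\ref{sec:quant-bookkeeping}), Lemma~\ref{lem:kernel-representation} provides an $L^2$ kernel $K_t(\cdot,\cdot;\tau)$ for $e^{-\tau H_t}$ on $X_{t,0}$. Then \eqref{eq:bilinearization-kernel} identifies the kernel integral exactly with $\langle I_t\Phi, e^{-\tau H_t}I_t\Psi\rangle_{L^2_t}$. This is an identity for each fixed $t$, not only in the limit, so the double integral and $\mathcal K_t(\Phi,\Psi;\tau)$ coincide term by term. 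Convergence of one to $K_B^{\mathrm{ren}}(\Phi,\Psi;\tau)$ is therefore literally convergence of the other, which gives the equivalence in both directions.

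For the error structure, I would insert the decomposition $\Phi=\Phi^{(\rho)}+\Phi^{<\rho}$, $\Psi=\Psi^{(\rho)}+\Psi^{<\rho}$ on both sides. By bilinearity of the kernel integral, this produces the same four pieces (outer--outer, two mixed, inner--inner) as in the proof of Theorem~\ref{thm:main-iterated}. Applying the kernel identity to each piece separately shows that every piece of the kernel integral equals the corresponding piece of $\mathcal K_t$.

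The bounds are then inherited piece by piece. Proposition~\ref{prop:interior-rate-rho} controls the outer--outer term. Lemma~\ref{lem:mixed-contract} together with Lemma~\ref{lem:L2-inner} controls the mixed terms. Proposition~\ref{prop:edge-rho} controls the inner--inner term. Together these reproduce \eqref{eq:main-bookkeeping} verbatim.

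The only point that needs care is that the kernel representation is tied to a fixed precompact truncation, whereas $\Phi,\Psi$ may meet $D$. I would handle this by interpreting the statement, as the corollary does, on truncations $X_{t,0}$ where $I_t\Phi$ is defined, with $\Phi,\Psi$ regarded through their restrictions. I would also check that the $L^2$ kernel integral converges absolutely. That follows from Cauchy--Schwarz, since $K_t\in L^2(X_{t,0}\times X_{t,0})$ and $I_t\Phi\otimes I_t\Psi\in L^2$ by Lemma~\ref{lem:L2-nearisometry}. No further limiting or dominated-convergence argument is needed, because the identification holds exactly at each $t$.
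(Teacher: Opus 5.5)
Your proposal is correct and is essentially the argument the paper intends (the paper gives no separate proof): Proposition~\ref{prop:bilinearization} identifies the kernel double integral with $\mathcal K_t(\Phi,\Psi;\tau)$ exactly for each $t$, and by bilinearity also for each of the four $\rho$-pieces, so both the equivalence and the error structure \eqref{eq:main-bookkeeping} transfer verbatim. The piece-by-piece re-derivation is unnecessary, since you can quote \eqref{eq:bookkeeping} of Theorem~\ref{thm:main-iterated} directly; note also that Proposition~\ref{prop:edge-rho} is a base-only estimate on $K^{\mathrm{ren}}_{B,\rho}-K^{\mathrm{ren}}_B$ and does not by itself bound the total-space inner--inner term $\mathcal K_t(\Phi^{<\rho},\Psi^{<\rho};\tau)$.
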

\section{Geometric input in semi-flat elliptic fibrations}\label{sec:geometric-verification}

This section records a concrete and verifiable set of geometric conditions implying the analytic hypotheses used in
Sections~\ref{sec:mosco}--\ref{sec:synthesis}. The goal is not to reprove semi-flat geometry, but to isolate the minimal
inputs needed to apply Theorem~\ref{thm:main-synthesis} in geometric collapse problems.

\subsection{Semi-flat approximation and the identification operators}\label{subsec:sf-identifications}

Let $\Pi_t:X_t\to B$ be a smooth elliptic fibration over $B_{\mathrm{reg}}:=B\setminus D$ with flat torus fibers
$F_{b,t}\cong \mathbb T^2$ and a family of Riemannian metrics $g(t)$ on $X_t$. Fix a precompact domain
$B_0\Subset B_{\mathrm{reg}}$ and set $X_{t,0}:=\Pi_t^{-1}(B_0)$.

Assume that on $X_{t,0}$ there exists a product reference metric
\[
g_t^{\Pi}:=g_B\oplus g_{F,t},
\]
as in the semi-flat collapse regimes studied in 
\cite{GrossWilson,TosattiAdiabatic,GrossTosattiZhang},
where $g_B$ is a fixed smooth metric on $B_0$ and $g_{F,t}$ restricts to a flat metric on each fiber, such that
\begin{equation}\label{eq:sf-C1-geo}
\|g(t)-g_t^{\Pi}\|_{C^1(X_{t,0},\,g_t^{\Pi})}\le \epsilon_t,\qquad
\epsilon_t\downarrow0
\quad
(\text{and optionally }\epsilon_t\le Ce^{-c/s^*(t)}).
\end{equation}

Let $A_t(b):=\mu_{F_{b,t}}(F_{b,t})$ be the fiber area and let $d\vartheta_{b,t}$ denote the Haar \emph{probability}
measure on $(F_{b,t},g_{F,t})$. Quantitative disintegration of volume along $\Pi_t$ yields measurable $\rho_t>0$
with fiberwise normalization $\int_{F_{b,t}}\rho_t\,d\vartheta_{b,t}=1$ such that for all $f\in L^1(X_{t,0})$,
\[
\int_{X_{t,0}} f\,d\mu_{g(t)}
=
\int_{B_0} A_t(b)\left(\int_{F_{b,t}} f\,\rho_t\,d\vartheta_{b,t}\right)d\mu_{g_B}(b).
\]
Moreover, the $C^1$ semi-flat control \eqref{eq:sf-C1-geo} implies the quantitative bounds
\[
\|\rho_t-1\|_{L^\infty(X_{t,0})}
+\|\nabla_{g_B}\rho_t\|_{L^\infty(X_{t,0})}
+\|\nabla_{g_B}\log A_t\|_{L^\infty(B_0)}
\ \le\ C\,\epsilon_t,
\]
after choosing the standard horizontal/vertical splitting associated with $g_t^{\Pi}$.

The normalized lift/average operators are then
\[
(I_t v)(x)=A_t(\Pi_tx)^{-1/2}v(\Pi_tx),\qquad
(P_t u)(b)=A_t(b)^{1/2}\int_{F_{b,t}}u\,\rho_t\,d\vartheta_{b,t},
\]
so that $P_tI_t=\mathrm{Id}$ and $I_t=P_t^*$, with two-sided $L^2$ near-isometry and $H^1$ compatibility on $B_0$.
These are exactly the structural inputs used to formulate Mosco convergence and compressed semigroup limits.

\subsection{Vertical spectral gap from fiber scale}\label{subsec:vertical-gap}

The collapse scale $s^*(t)\downarrow0$ is encoded in a uniform upper bound on the fiber diameter:
\begin{equation}\label{eq:fiber-diam-geo}
\mathrm{diam}(F_{b,t},g_{F,t})\le s^*(t)\qquad \text{for all }b\in B_0.
\end{equation}

\begin{lemma}[Flat torus gap from diameter]\label{lem:torus-gap-geo}
There exists a universal constant $c_0>0$ such that for every flat $2$--torus $(\mathbb T^2,h)$,
\[
\lambda_1(\mathbb T^2,h)\ge c_0\,\mathrm{diam}(\mathbb T^2,h)^{-2},
\]
where $\lambda_1$ denotes the first nonzero eigenvalue of $-\Delta_h$.
Consequently, under \eqref{eq:fiber-diam-geo},
\begin{equation}\label{eq:gap-from-scale-geo}
\lambda_1(F_{b,t},g_{F,t})\ge c_0\,s^*(t)^{-2}\qquad \text{uniformly for }b\in B_0.
\end{equation}
\end{lemma}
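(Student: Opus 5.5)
The plan is to reduce to the lattice picture and then bound the shortest dual lattice vector from below by the diameter. Write $(\mathbb T^2,h)=\mathbb R^2/\Lambda$ with $\Lambda$ a lattice and $\mathbb R^2$ carrying the Euclidean metric. The spectrum of $-\Delta_h$ is explicit:
\[
\{4\pi^2|\xi|^2:\ \xi\in\Lambda^*\},
\]
with eigenfunctions $e^{2\pi i\langle\xi,x\rangle}$, where $\Lambda^*$ is the dual lattice. Hence
\[
\lambda_1=4\pi^2\,|\xi_{\min}|^2,
\]
with $\xi_{\min}$ a shortest nonzero vector of $\Lambda^*$, and the claim reduces to the inequality $|\xi_{\min}|\ge c\,\mathrm{diam}^{-1}$.

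For this inequality I would use a direct argument. Fix $0\neq\xi\in\Lambda^*$. The function $x\mapsto\langle\xi,x\rangle$ descends to a Riemannian submersion (up to the factor $|\xi|$) onto the circle $\mathbb R/\mathbb Z$, since $\langle\xi,\lambda\rangle\in\mathbb Z$ for all $\lambda\in\Lambda$. This map is surjective and $|\xi|$-Lipschitz.
\begin{itemize}
\item Choose two points of $\mathbb T^2$ whose images are at circle distance $1/2$.
\item Their torus distance $d$ satisfies $d\le \mathrm{diam}$.
\item The Lipschitz bound gives $1/2\le |\xi|\,d$.
\end{itemize}
Combining these, $|\xi|\ge 1/(2\,\mathrm{diam})$, and therefore
\[
\lambda_1\ge \pi^2\,\mathrm{diam}^{-2},
\]
so the statement holds with $c_0=\pi^2$.

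An alternative route is to insert the test function $\cos(2\pi\langle\xi,x\rangle)$ into a lower bound via Li--Yau or Zhong--Yang, which give $\lambda_1\ge\pi^2/\mathrm{diam}^2$ directly for nonnegative Ricci curvature. I would mention this as a cross-check only, since the lattice argument is self-contained.

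The consequence \eqref{eq:gap-from-scale-geo} then follows by applying the inequality fiberwise to $(F_{b,t},g_{F,t})$ and using \eqref{eq:fiber-diam-geo}:
\[
\mathrm{diam}^{-2}\ge s^*(t)^{-2}.
\]
The constant is universal and hence uniform in $b\in B_0$ and $t$.

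The only step requiring care is the Lipschitz/surjectivity claim for the circle map, specifically that it is well defined on the quotient, which rests on the defining property of the dual lattice. I expect no genuine obstacle.
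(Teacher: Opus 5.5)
Your proof is correct, and it takes a genuinely different route from the paper. The paper's proof is a one-line citation of the Zhong--Yang estimate $\lambda_1\ge\pi^2/\mathrm{diam}^2$ for compact manifolds with $\mathrm{Ric}\ge0$, applied to flat tori.

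You instead use flatness fully. The explicit Fourier spectrum $\{4\pi^2|\xi|^2:\xi\in\Lambda^*\}$ reduces the claim to a lower bound on the shortest nonzero dual vector. The map $x\mapsto\langle\xi,x\rangle \bmod 1$ is well defined precisely because $\xi\in\Lambda^*$. It is surjective and $|\xi|$-Lipschitz, since the circle distance is at most $\min_{\lambda\in\Lambda}|\langle\xi,x-y+\lambda\rangle|\le|\xi|\,d_{\mathbb T^2}$. This gives $|\xi|\ge 1/(2\,\mathrm{diam})$, hence the same constant $c_0=\pi^2$. That constant is asymptotically sharp for thin rectangular tori.

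The paper's route is shorter and never needs the lattice description, but it rests on a nontrivial gradient-estimate theorem. Your route is elementary and self-contained, shows transparently where $\pi^2$ comes from, and works verbatim for flat tori $\mathbb R^n/\Lambda$ of any dimension. The paper does allude to abelian-fibred settings, where higher-dimensional fibers arise.

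One small correction to your side remark. Inserting a test function such as $\cos(2\pi\langle\xi,x\rangle)$ into the Rayleigh quotient gives an \emph{upper} bound on $\lambda_1$, not a lower one. Zhong--Yang is applied directly, with no test function; Li--Yau does the same but with the weaker constant $\pi^2/2$. That direct application is exactly the paper's proof.
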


\begin{proof}
This follows, for instance, from the Zhong--Yang lower bound $\lambda_1\ge \pi^2/\mathrm{diam}^2$
on compact manifolds with $\mathrm{Ric}\ge0$ \cite{ZhongYang}, applied to flat tori (for which $\mathrm{Ric}=0$).
\end{proof}

Lemma~\ref{lem:torus-gap-geo} yields the fiberwise Poincar\'e inequality and the suppression of fiber-nonconstant modes at
energies $\ll s^*(t)^{-2}$, which is the starting point of the analytic chain in Sections~\ref{sec:mosco}--\ref{sec:semigroup-kernel}.

\subsection{Conic base input at the discriminant}\label{subsec:conic-input}

Near each discriminant point $p_j\in D$, fix a wedge chart $U_j$ with polar coordinate $r=r_j$ and assume that
$g_B$ is $C^1$--close to the flat cone metric
\[
g_j^{\mathrm{cone}}:=dr^2+\alpha_j^2 r^2\,d\theta^2
\]
in the sense of Assumption~\ref{ass:conic-asymptotics}, with some exponent $\beta>0$ controlling the coefficient defect.
This is precisely the hypothesis needed to construct the conic parametrix and the cutoff-independent functional
$K_B^{\mathrm{ren}}(\cdot,\cdot;\tau)$ (Section~\ref{sec:conic-ren}).

\subsection{Application template}\label{subsec:application-template}

In summary, to apply Theorem~\ref{thm:main-synthesis} it suffices to verify on each $B_0\Subset B_{\mathrm{reg}}$:
\begin{enumerate}
\item semi-flat $C^1$ control \eqref{eq:sf-C1-geo} for $g(t)$ relative to $g_B\oplus g_{F,t}$ (with $\epsilon_t\to0$);
\item the induced disintegration bounds on $(A_t,\rho_t)$ including $\|\nabla_{g_B}\rho_t\|_\infty=O(\epsilon_t)$;
\item fiber scale control \eqref{eq:fiber-diam-geo}, hence the vertical gap \eqref{eq:gap-from-scale-geo};
\item a $C^1$ conic model for $g_B$ near each $p_j\in D$ with cone parameter $\alpha_j$.
\end{enumerate}
Once these are in place, the analytic mechanism (vertical suppression $\Rightarrow$ Mosco $\Rightarrow$ compressed
semigroups $\Rightarrow$ interior bilinear kernel limit $\Rightarrow$ conic renormalization) applies verbatim, with the
quantitative bookkeeping recorded in Section~\ref{sec:quant-bookkeeping}.

\end{document}